\theoremstyle{plain} \numberwithin{equation}{section}
\newtheorem{Theorem}{Theorem}
\newtheorem{Lemma}[Theorem]{Lemma}
\newtheorem{Proposition}[Theorem]{Proposition}
\newtheorem{Corollary}[Theorem]{Corollary}
\newtheorem{Definition}[Theorem]{Definition}
\theoremstyle{remark}
\date{}
\title[Equiconvergence]
{Equiconvergence of spectral decompositions of 1D Dirac
operators with regular boundary conditions}
\author[P. Djakov]{Plamen Djakov}\thanks{P. Djakov acknowledges
the hospitality  of Department of Mathematics 
and the support of Mathematical Research Institute of 
The Ohio State University, July - August 2011.}
\author[B. Mityagin]{Boris Mityagin}
\thanks{B. Mityagin acknowledges the support of the Scientific and
Technological Research Council of Turkey and the hospitality of
Sabanci University, April--June, 2011.}
\begin{document}

\address{Sabanci University, Orhanli,
34956 Tuzla, Istanbul, Turkey}
 \email{djakov@sabanciuniv.edu}
\address{Department of Mathematics,
The Ohio State University,
 231 West 18th Ave,
Columbus, OH 43210, USA}
\email{mityagin.1@osu.edu}

\begin{abstract}
One dimensional  Dirac operators $$ L_{bc}(v) \, y = i
\begin{pmatrix} 1 & 0 \\ 0 & -1
\end{pmatrix}
\frac{dy}{dx}  + v(x) y, \quad y = \begin{pmatrix} y_1\\y_2
\end{pmatrix},  \quad
x\in[0,\pi],$$ considered with  $L^2$-potentials $ v(x) =
\begin{pmatrix} 0 & P(x) \\ Q(x) & 0 \end{pmatrix} $ and subject to
regular boundary conditions ($bc$), have discrete spectrum. For
strictly regular $bc,$  the spectrum of the free
operator $ L_{bc}(0) $ is simple while the spectrum of $ L_{bc}(v) $
is eventually simple, and the corresponding normalized root function
systems are Riesz bases.
For expansions of functions of bounded variation about these Riesz bases,
we prove the uniform equiconvergence property
and point-wise convergence on the closed interval $[0,\pi].$
Analogous results are obtained for regular but not strictly regular $bc.$
\vspace{1mm}\\
{\it Keywords}:  Dirac operators, spectral decompositions, Riesz
bases, equiconvergence \vspace{1mm} \\
{\em 2010 Mathematics Subject Classification:} 47E05, 34L40.

\end{abstract}
\maketitle

\section*{Content}
\begin{enumerate}

\item[Section 1.]  Introduction \vspace{2mm}

\item[Section 2.]  Preliminaries \vspace{2mm}

\item[Section 3.] Estimates for the resolvent
of $L_{bc} $ and localization of spectra  \vspace{2mm}

\item[Section 4.]  Equiconvergence \vspace{2mm}

\item[Section 5.]  Point-wise convergence of spectral decompositions \vspace{2mm}

\item[Section 6.]  Generalizations and comments \vspace{2mm}

\item[Section 7.]  Self-adjoint separated $bc$ \vspace{2mm}

\item[Section 8.]   Appendix: Discrete Hilbert transform and multipliers
\vspace{2mm}

\item[]       References

\end{enumerate}
\bigskip

\section{Introduction}

  Spectral theory of non-self-adjoint boundary value problems ($BVP$) for
ordinary  differential equations on a finite interval $I$ goes back
to the classical works of Birkhoff \cite{Bir1,Bir2} and Tamarkin
\cite{Tam1,Tam2,Tam3}. They introduced a concept of regular ($R$)
 boundary conditions ($bc$) and investigated asymptotic behavior
 of eigenvalues and eigenfunctions of such problems. Moreover, they
proved that the system of eigenfunctions and associated functions
($SEAF$) of a regular $BVP$ is complete.
Detailed presentation of this topic could be found in \cite{Na69}.

   More subtle is the question whether $SEAF$ is a basis or an
unconditional basis in the Hilbert space $H^0 = L^2(I)$. N. Dunford
\cite{Du58} (see also \cite{DS71}),
V. P. Mikhailov \cite{Mi62}, G. M.  Keselman
\cite{Ke64} independently proved that the $SEAF$ is an
unconditional, or Riesz, basis if $bc$ are strictly regular ($SR$).
This property is lost if $bc$ are $R \setminus SR$, i.e., regular
but not strictly regular; unfortunately, this is just the case of
periodic ($Per^+$) and anti-periodic ($Per^-$) $bc.$  But A.  A.
Shkalikov \cite{Sh79, Sh82, Sh83} proved that in $R \setminus SR$
cases a proper chosen finite-dimensional projections form a Riesz
basis of projections.

Dirac operators
\begin{equation}
\label{i1}
Ly = i \begin{pmatrix}    1  & 0 \\  0 & -1
\end{pmatrix}
\frac{dY}{dx} + v(x) Y, \quad Y =
 \begin{pmatrix}    y_1 \\  y_2
\end{pmatrix}, \quad   v(x)=\begin{pmatrix}  0& P(x) \\ Q(x) & 0
\end{pmatrix}
\end{equation}
with $P, Q \in L^2 (I),$ and more general operators
\begin{equation}
\label{i2}
My = i B \frac{dY}{dx} + v(x) Y, \quad Y =(y_j (x))_1^d,
\end{equation}
where $B$ is a $d \times d $-matrix and $v(x) $ is
 a $d \times d$ matrix-valued $L^2 (I)$ function,
 bring new difficulties. One of them comes from the fact that the values
of the resolvent  $(\lambda - L_{bc})^{-1}$ are not trace class operators.

For general system (\ref{i2})  M. M. Malamud and L. L.
Oridoroga \cite{MO,MO10} proved
completeness  of SEAF for a wide class of BVP which includes regular
(in the sense of \cite{BL23}) BVP's.

The Riesz basis property for $2 \times 2 $  Dirac
operators (\ref{i1})  was proved by I. Trooshin and M. Yamamoto
\cite{TY01, TY02} in the case of separated $bc$  and $v \in L^2$.
S. Hassi and
L. L. Oridoroga \cite{HO09}
proved the Riesz basis property for (\ref{i2}) when $B =
\begin{pmatrix} a  & 0 \\  0  & -b   \end{pmatrix}, $
with $a,b >0,$  for separated  $bc$  and $v \in  C^1 (I).$

  B. Mityagin \cite{Mit03}, \cite[Theorem 8.8]{Mit04}  proved that
  {\em periodic (or
anti-periodic)} $bc$ give a rise of a Riesz system of 2D projections
(or 2D invariant subspaces) under the smoothness restriction $P, Q
\in H^\alpha, \; \alpha > 1/2,$ on the potentials $v$ in (\ref{i1}).
The authors removed that restriction in \cite{DM20}, where the same
result is obtained for {\em any} $L^2$ potential $v.$ This became possible
in the framework of the
 general approach
 to analysis of invariant (Riesz)
subspaces and their closeness to 2D subspaces of the free
operator developed and used by the authors in
\cite{DM3, DM5, DM6, DM7, DM15}.

Moreover, in \cite{DM23}
these results are extended to Dirac operators with {\em any
regular} $bc.$
Careful analysis of
  {\em regular} and {\em strictly regular}
 $bc$ and construction of Riesz bases or Riesz system of projections
 which is done in \cite{DM23} give us the background for treating
 questions on equiconvergence  and point-wise convergence  of spectral
 decompositions
(or "the development in characteristic functions of the system"
as G. Birkgoff and R. Langer \cite{BL23} would say).

These
questions for o.d.o. were raised by G. D. Birkhoff  \cite{Bir1,Bir2}
and J. Tamarkin \cite{Tam1,Tam2,Tam3} as well, or even earlier for
second order operators by V. A. Steklov, E. W. Hobson and A.A.Haar.
 A nice survey of further development of equiconvergence theory over
 the last 100 years
(we do not provide the names of authors -- any list would be incomplete
and unfair) is given by A. Minkin \cite{Min99}.
\bigskip

In this paper we analyze in detail one-dimensional Dirac operators
(\ref{i1});  we address the following questions:

(i) for  given $bc,$  does uniform convergence of the series
$$
f(x) = S_N f + \sum_{|k|>N}^\infty P_k f, \quad x \in [0,\pi],
$$
for an individual $f\in L^2 ([0,\pi], \mathbb{C}^2) $ depend on
the potential  $v?$

(ii) for good enough functions $f,$   say $f$ is of bounded
variation, do point-wise limits
$$
\lim_{m \to \infty} \left (S_N f (x) + \sum_{|k|>N}^m P_k  f (x)
\right ) =F(x)
$$
exist?  If {\em yes},  how to describe the limit function $F(x) $ in
terms of $f$ and $bc?$

The less rigid questions ask about uniform convergence on compact
subsets of $(0,\pi).$
In this case for any complete system $\{u_n (x) \} $
of eigenfunctions of the operator (\ref{i1})  with its biorthogonal
system $\{\psi_n \},$  let us define
$$
\sigma_m (x,f)= \sum_{n \leq m }  \langle f,\psi_n \rangle u_n
$$
and compare these partial sums with
$$
S_m (x,f) = \frac{1}{\pi} \int_0^\pi \frac{\sin (x-y)}{x-y} f(y) dy,
$$
 the proxy of partial sums of the standard Fourier series.

Many authors (V. A. Ilin \cite{I83, I91-1, I91-2}, Horvath
\cite{Ho95}) compare $\sigma_m$
 and $S_m$ on compacts in $(0, \pi).$
 For example,  in \cite{Ho95} it is shown , under the assumption
 that the system $\{u_j\}$ is a Riesz basis and
  $v \in L^p, \, p >2,$  that for any compact $K \subset (0,\pi) $
we have
$$
\lim_m \left ( \sup_{x \in K}  |\sigma_m (x,f) - S_m (x,f) |     \right )
=0  \quad  \forall  f \in L^2 ([0,\pi], \mathbb{C}^2). $$

We focus on questions on equiconvergence and point-wise convergence
on the {\em entire closed interval} $[0, \pi].$ The structure of
this paper is the following.

Section 2 reminds elementary facts on Riesz bases and Riesz systems
of projections in a Hilbert space, and gives (after \cite{DM23})
explicitly such
bases and systems of projections in the case of free Dirac
operators {\em subject to arbitrary regular} $bc,$
  with special attention on their dependence
on parameters of boundary conditions.

Any analysis of spectral decompositions requires accurate information
on localization of spectra $Sp (L_{bc}) $
and good estimates of the resolvent $(z-L_{bc})^{-1}$
outside the $Sp (L_{bc}). $  Such analysis is done in
\cite{DM23}, but in Section 3 we carry it in a different way in order to
obtain at the same time some basic preliminary inequalities
that play an essential role later.

Section 4 is the core of this paper. As usually, the deviation
$$ S_N -S_N^0    = \frac{1}{2 \pi i} \int_{\partial R_{NT}}
(R_\lambda -R^0_\lambda) d\lambda = A_N  + B_N,
$$
where
$$A_N =
\frac{1}{2 \pi i} \int_{\partial R_{NT}}
R^0_\lambda VR^0_\lambda d\lambda,
\quad
B_N  = \frac{1}{2 \pi i} \int_{\partial R_{NT}}
\sum_{m=2}^\infty
R^0_\lambda (VR^0_\lambda)^m
d\lambda.
$$
It happens that the estimates of the "nonlinear" component
$B_N $  (see Proposition \ref{propBN}) are a little bit simpler;
they reduce the problem of equiconvergence to questions on behavior
of the "linear" component $A_N (F)$ when $n \to \infty$
and its dependence on the smoothness of potentials $v$
or a vector-function $F.$  Proposition \ref{propAN}
and Lemma \ref{lemsob}  specify these smoothness conditions
and lead to our main result (Theorem \ref{EC}):

{\em For regular $bc, $  Dirac potentials $v = \begin{pmatrix}
0 & P\\ Q &0 \end{pmatrix} $ with $P, Q \in L^2 ([0,\pi])$ and $F=
\begin{pmatrix} F_1\\ F_2 \end{pmatrix} $ with $F_1, F_2  \in L^2([0,\pi],$
\begin{equation}
\label{i10} \left \| \left ( S_N - S^0_N \right ) F \right
\|_{\infty} \to 0 \quad \text{as} \quad N \to \infty
\end{equation}
whenever one of the following conditions is satisfied:

(a) $ \exists \beta > 1 $ such that $$ \sum_{k\in 2\mathbb{Z}}
(|F_{1,k}|^2 +|F_{2,k}|^2) (\log(e+|k|))^\beta< \infty, $$ where $
(F_{1,k})_{k\in 2\mathbb{Z}} $ and $(F_{2,k})_{k\in 2\mathbb{Z}}$
are, respectively, the Fourier coefficients of $F_1 $ and $F_2$ about
the system $\{e^{ikx},\, k\in 2\mathbb{Z}\};$

(b) $\exists \beta >1 $ such that $$ \sum_{k\in 2\mathbb{Z}}
(|p(k)|^2 +|q(k)|^2) (\log(e+|k|))^\beta < \infty, $$ where $ (p(k))_{k\in
2\mathbb{Z}} $ and $(q(k))_{k\in 2\mathbb{Z}}$ are, respectively, the
Fourier coefficients of $P $ and $Q$ about the system $\{e^{ikx},\,
k\in 2\mathbb{Z}\}.$

In particular, if  $F_1,\, F_2 $ are functions of bounded variation
or  $P, Q $ are functions of bounded variation, then (\ref{i10})
holds.}

This equiconvergence claim reduces (Section 5, Theorem \ref{GCT})
point-wise convergence problem to the case of free operator
where we can use explicit information on Riesz bases of root
functions and answer question (ii)  -- see Formulas (\ref{c03})
and (\ref{c3}).

In Section 6 we consider Dirac operators with more general
potential matrices $T=\begin{pmatrix}   T_{11}   &  T_{12} \\ T_{21}
& T_{22} \end{pmatrix}$
and weighted eigenvalue problems.
The results and formulas of Section 5 are properly adjusted
to this case.

Finally, in Section 7 we consider Examples (motivated by the  paper
of R. Szmytkowski \cite{SZ01})  with self-adjoint separated boundary
conditions -- see Theorems \ref{thms1} and \ref{SBC}.

Appendix (Section 8) gives a detailed proof of a technical lemma 
(Lemma 18) on on $C^1$-multipliers in the weighted sequence 
spaces.  Discrete Hilbert transform is an essential component of 
this proof. 

\section{Preliminaries}

1. Riesz bases

Let $H$ be a separable Hilbert space, and let $(e_\gamma, \, \gamma \in
\Gamma)$ be an o.n.b. in $H.$ If $A: H\to H$ is an automorphism,
then the system
\begin{equation}
\label{p2} f_\gamma = A e_\gamma, \quad  \gamma \in \Gamma,
\end{equation}
is an unconditional basis in $H.$ Indeed, for each $x\in H$ we have
$$ x= A(A^{-1} x )= A \left (\sum_\gamma \langle A^{-1} x,e_\gamma
\rangle e_\gamma \right)= \sum_\gamma \langle x,(A^{-1})^*e_\gamma
\rangle f_\gamma =\sum_\gamma \langle x,\tilde{f}_\gamma \rangle
f_\gamma,$$ i.e., $(f_\gamma)$ is a basis, and its biorthogonal
system is
\begin{equation}
\label{p2a} \tilde{f}_\gamma =(A^{-1})^* e_\gamma, \quad \gamma \in
\Gamma.
\end{equation}
Moreover, it follows that
\begin{equation} \label{p3} 0< c \leq
\|f_\gamma\| \leq  C, \quad m^2\|x\|^2 \leq \sum_\gamma |\langle
x,\tilde{f}_\gamma \rangle|^2 \|f_\gamma\|^2 \leq M^2 \|x\|^2,
\end{equation}
with $ c= 1/\|A^{-1}\|, \; C=\|A\|,  \; M= \|A\|\cdot \|A^{-1}\|$
and $m=1/M.$

A basis of the form (\ref{p2}) is called {\em Riesz basis.} One can
easily see that the property (\ref{p3}) characterizes Riesz bases,
i.e., a basis $(f_\gamma)$ is a Riesz bases if and only if
(\ref{p3}) holds with some constants $C\geq c>0$ and $M \geq m >0.$
Another characterization of Riesz bases gives the following
assertion (see \cite[Chapter 6, Section 5.3, Theorem 5.2]{GK}):  {\em If
$(f_\gamma)$ is a normalized  basis (i.e., $\|f_\gamma\|=1 \;
\forall \gamma $), then it is a Riesz basis if and only if it is
unconditional.}
\bigskip

2. We consider the Dirac operators $L=L(v)$ and $L^0 = L(0)$ given by
(\ref{i1}) on the interval $I=[0,\pi]. $
In the following, the space $L^2 (I, \mathbb{C}^2) $ is
regarded with the scalar product
\begin{equation}
\label{0} \left \langle \begin{pmatrix} f_1 \\ f_2
\end{pmatrix},\begin{pmatrix} g_1 \\ g_2  \end{pmatrix}  \right
\rangle =\frac{1}{\pi} \int_0^\pi \left (f_1 (x) \overline{g_1 (x)}
+ f_2 (x) \overline{g_2 (x)} \right ) dx.
\end{equation}

A general boundary condition ($bc$) for the operator  $L(v)$ is
given by a system of two linear equations
\begin{eqnarray}
\label{1} a_1 y_1 (0) +b_1 y_1 (\pi) + a_2 y_2 (0) + b_2 y_2
(\pi)=0  \\ \nonumber c_1 y_1 (0) +d_1 y_1 (\pi) + c_2 y_2 (0) +
d_2 y_2 (\pi)=0
\end{eqnarray}
Consider the corresponding operator $L_{bc}(v)$ in the domain $Dom
\, L_{bc}(v) $ which consists of all absolutely continuous $y $ such
that (\ref{1}) holds and $y^\prime_1, y^\prime_1 \in L^2 (I,
\mathbb{C}^2).$ It is easy to see that $L_{bc}(v)$ is a closed
densely defined operator.

Let  $A_{ij}$  denote the  $2\times 2$ matrix
 formed by the $i$-th and $j$-th columns of the
matrix
$ \left [
\begin{array}{cccc}
a_1 & b_1 & a_2 & b_2\\ c_1 & d_1& c_2 & d_2
\end{array}
\right ],
$
and let $|A_{ij}|$ denote the determinant of the matrix $A_{ij}.$
Each solution of the equation
$L^0 y =\lambda y$
has the form
$ y =\begin{pmatrix} \xi e^{-i\lambda x}\\ \eta
e^{i\lambda x} \end{pmatrix}.$
It satisfies the boundary condition (\ref{1}) if and only if $(\xi,
\eta)$ is a solution of the system of two linear equations
\begin{eqnarray}
\label{4} \xi (a_1 + b_1 z^{-1} ) + \eta (a_2 + b_2 z) =0  \\
\nonumber \xi (c_1 + d_1 z^{-1} ) + \eta (c_2 + d_2 z) =0
\end{eqnarray}
with $z= \exp (i\pi\lambda).$  Therefore, there is a non-zero solution $y$
 if and only if the determinant of (\ref{4}) is
zero, i.e.,
\begin{equation}
\label{5} |A_{14}| z^2 + (|A_{13}| + |A_{24}|) z + |A_{23}|=0.
\end{equation}
\begin{Definition}
The boundary condition (\ref{1}) is called: {\bf regular} if
\begin{equation}
\label{6} |A_{14}| \neq 0, \quad |A_{23}| \neq 0,
\end{equation}
and {\bf strictly regular} if additionally
\begin{equation}
\label{7} (|A_{13}|+|A_{24}|)^2 \neq 4|A_{14}| |A_{23}|.
\end{equation}
\end{Definition}

Further only regular boundary conditions
are considered. A multiplication
from the left of the system (\ref{1})
by the matrix $A^{-1}_{14} $  gives  an equivalent to (\ref{1}) system
\begin{eqnarray}
\label{8a}  y_1 (0) +b y_1 (\pi) + a y_2 (0) =0,
\\ \nonumber d y_1 (\pi) + c y_2 (0) +  y_2 (\pi)=0.
\end{eqnarray}
So, without loss of generality one may consider only $bc$ of the
form (\ref{8a}). The boundary conditions (\ref{8a}) are uniquely
determined by the matrix of coefficients $\begin{pmatrix} 1 & b& a &0\\
0 & d & c & 1 \end{pmatrix}.$ Then $bc$ is {\bf regular} if
\begin{equation}
\label{10}   bc-ad \neq 0,
\end{equation}
and {\bf strictly regular} if additionally
\begin{equation}
\label{11}   (b-c)^2 +4ad \neq 0.
\end{equation}
The characteristic equation (\ref{5}) becomes
\begin{equation}
\label{13} z^2 + (b+c)z + bc-ad =0.
\end{equation}

In the case of strictly regular boundary $bc$  (\ref{10}) and
(\ref{11}) guarantee that  (\ref{13}) has two
distinct nonzero roots $z_1$ and $z_2$
(i.e., the matrix $A_{23}=\begin{pmatrix} b & a \\ d  & c
\end{pmatrix}$ has
two {\em distinct} eigenvalues $-z_1, -z_2 $). Let us fix a pair
of corresponding eigenvectors $\begin{pmatrix} \alpha_1 \\
\alpha_2\end{pmatrix}$ and $
\begin{pmatrix} \beta_1 \\ \beta_2 \end{pmatrix}. $
Then the matrix $\begin{pmatrix}
\alpha_1 & \beta_1\\ \alpha_2 & \beta_2
\end{pmatrix}$ is invertible,  and we set
\begin{equation}
\label{20a}
\begin{pmatrix} \alpha^\prime_1 & \alpha^\prime_2\\
\beta^\prime_1 & \beta^\prime_2 \end{pmatrix} :=
\begin{pmatrix} \alpha_1 & \beta_1\\
\alpha_2 & \beta_2 \end{pmatrix}^{-1}.
\end{equation}

Let $\tau_1$ and $ \tau_2 $ be chosen so that
\begin{equation}
\label{14} z_1 = e^{i\pi \tau_1}, \quad z_2 = e^{i\pi \tau_2},
\quad  |Re\, \tau_1 - Re\, \tau_2| \leq 1.
\end{equation}
Then the eigenvalues  of $L^0_{bc}$ are $\lambda_{k,\nu}^0 = k +
\tau_\nu, \; \nu \in \{1,2\}, $ $ k \in 2\mathbb{Z},$ and a
 corresponding system of eigenvectors is $\Phi =\{\varphi^1_k,
 \varphi^1_k, \, k \in 2\mathbb{Z}\},$ where
\begin{equation}
\label{21} \varphi^1_k :=
\begin{pmatrix}  \alpha_1 e^{i \tau_1 (\pi-x)} e^{-ikx}
\\  \alpha_2 e^{i \tau_1 x} e^{ikx}\end{pmatrix},
\qquad \varphi^2_k :=\begin{pmatrix}  \beta_1 e^{i \tau_2 (\pi-x)}
e^{-ikx}
\\  \beta_2 e^{i \tau_2 x} e^{ikx}\end{pmatrix}.
\end{equation}

If $bc$ is regular but not strictly regular, then (\ref{10}) holds
but (\ref{11}) fails, i.e.,
\begin{equation}
\label{68} (b+c)^2 - 4(bc-ad) = (b-c)^2 + 4ad =0.
\end{equation}
In this case the  equation (\ref{13}) has a double root $z_*=-(b+c)/2
\neq 0 $ (because $bc-ad \neq 0$). Choose $\tau_* $  so that $ z_* =
\exp (i \pi \tau_*), \quad |\tau_*| \leq 1.$ Then each eigenvalue of
$L^0_{bc}$ is of algebraic multiplicity 2 and has the form $ \tau_*
+k,\;\; k \in 2\mathbb{Z}. $

We call the boundary conditions given by the system (\ref{8a}) {\em
periodic--type} if
\begin{equation}
\label{72} b=c, \quad a=0, \quad d=0,
\end{equation}
holds. The condition (\ref{72}) takes place if and only if $A_{23}+
z_* I$ is the zero matrix, so then any two linearly independent
vectors $\begin{pmatrix} \alpha_1
\\ \alpha_2
\end{pmatrix}$ and $\begin{pmatrix} \beta_1 \\ \beta_2
\end{pmatrix}$
are eigenvectors of $A_{23}. $ With any choice of such vectors, the
system $\Phi$  given by (\ref{21})  but with $\tau_2=\tau_1= \tau_*,
$ consists of corresponding eigenfunctions of $L^0_{bc}.$

Next we consider the case when (\ref{68}) holds but (\ref{72})
fails, i.e.,
\begin{equation}
\label{74} |b-c|+ |a| + |d| >0.
\end{equation}
In this case each eigenvalue of $L^0_{bc} $ is
of algebraic multiplicity 2 but of geometric multiplicity 1, i.e.,
associated eigenvectors appear. Here we have the following subcases:

(i)  $a= 0,$ then  (\ref{68}) implies $b=c,$ and by (\ref{74}) we
have $d\neq 0;$

(ii)  $d= 0,$ then  (\ref{68}) implies $b=c,$ and by (\ref{74}) we
have $a\neq 0;$

(iii)  $a,d\neq 0,$ then  (\ref{68}) implies $b\neq c.$

Now we set
\begin{equation}
\label{80}
\begin{pmatrix}
\alpha_1  &  \beta_1  \\ \alpha_2  &  \beta_2
\end{pmatrix}
= \begin{cases}
\begin{pmatrix}
0  &  \pi b  \\ d  &  0
\end{pmatrix}
&  \text{for}  \quad  (i), \vspace{1mm}\\
\begin{pmatrix}
a  &  0  \\ \frac{c-b}{2}  &  \pi b
\end{pmatrix}
&  \text{for}  \quad  (ii), (iii).
\end{cases}
\end{equation}
A corresponding system of eigenvectors is given by
\begin{equation}
\label{83} \Phi^1 =\{\varphi^1_k,\;k\in 2\mathbb{Z}\},     \quad
\varphi^1_k=
\begin{pmatrix}  \alpha_1 e^{i \tau_* (\pi-x)} e^{-ikx}
\\ \alpha_2 e^{i \tau_* x} e^{ikx}\end{pmatrix},
\end{equation}
and
\begin{equation}
\label{85}\Phi^2 =\{\varphi^2_k,\;k\in 2\mathbb{Z}\},     \quad
\varphi^2_k=
\begin{pmatrix}    (\beta_1 - \alpha_1 x) e^{i \tau_* (\pi-x)} e^{-ikx}
\\  (\beta_2 + \alpha_2 x) e^{i \tau_* x} e^{ikx}\end{pmatrix}
\end{equation}
 is a system of corresponding associated vectors.

\begin{Theorem}
\label{thm01} (a) For strictly regular or periodic type $bc,$ the
system $\Phi$ given by (\ref{21}) is a Riesz basis in the space $L^2
(I, \mathbb{C}^2), \; I= [0,\pi].$ Its biorthogonal system is
$\tilde{\Phi} =\{ \tilde{\varphi}^1_k, \tilde{\varphi}^2_k, \: k \in
2\mathbb{Z} \},$ where
\begin{equation}
\label{21*} \tilde{\varphi}^1_k :=
\begin{pmatrix}  \overline{\alpha^\prime_1} e^{i \overline{\tau_1}
(\pi-x)} e^{-ikx}
\\  \overline{\alpha^\prime_2} e^{i \overline{\tau_1} x} e^{ikx}\end{pmatrix},
\qquad \tilde{\varphi}^2_k :=
\begin{pmatrix}  \overline{\beta^\prime_1} e^{i \overline{\tau_2}
(\pi-x)} e^{-ikx}
\\ \overline{\beta^\prime_2} e^{i \overline{\tau_2} x} e^{ikx}\end{pmatrix},
\end{equation}
with $\alpha_1^\prime, \alpha_2^\prime, \beta_1^\prime,
\beta_2^\prime $
  coming, respectively, from (\ref{20a}) for strictly regular $bc$
or periodic type $bc.$

(b) For regular but not strictly regular $bc,$ the system $\Phi
=\Phi^1 \cup \Phi^2$  given in (\ref{83}) and (\ref{85})  is a Riesz
basis in the space $L^2 (I,\mathbb{C}^2).$ Its biorthogonal system is
$\tilde{\Phi} =\{ \tilde{\varphi}^1_k, \tilde{\varphi}^2_k, \: k \in
2\mathbb{Z} \},$ where
\begin{equation}
\label{91} \tilde{\varphi}^1_k= \begin{pmatrix} \bar{\Delta}^{-1}
\overline{\alpha_2}
e^{i\overline{\tau_*} (\pi - x)} e^{-ikx} \\
\bar{\Delta}^{-1} \overline{\alpha_1} e^{i\overline{\tau_*} x}
e^{ikx}
\end{pmatrix}, \quad
\tilde{\varphi}^2_k= \begin{pmatrix} \bar{\Delta}^{-1}
[\overline{\beta_2} +\overline{\alpha_2} (\pi -x)]
e^{i\overline{\tau_*} (\pi - x)} e^{-ikx}
\\ \bar{\Delta}^{-1} [\overline{\beta_1} - \overline{\alpha_1}(\pi-x)]
e^{i\overline{\tau_*} x} e^{ikx}
\end{pmatrix}
\end{equation}
with $\Delta =\alpha_1 \beta_2 - \alpha_2 \beta_1 + \pi \alpha_1
\alpha_2. $
\end{Theorem}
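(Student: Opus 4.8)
The plan is to exhibit the system $\Phi$ (resp.\ $\Phi^1\cup\Phi^2$) as the image $AE$ of a fixed orthonormal basis $E$ under an automorphism $A$ of $L^2(I,\mathbb{C}^2)$, and then quote the Riesz-basis characterization (\ref{p2})--(\ref{p3}). First I would fix the orthonormal basis $E=\{e^1_k,e^2_k:\,k\in2\mathbb{Z}\}$, with $e^1_k=\begin{pmatrix}e^{-ikx}\\0\end{pmatrix}$ and $e^2_k=\begin{pmatrix}0\\e^{ikx}\end{pmatrix}$; orthonormality in the inner product (\ref{0}) is immediate once one notes that $k-m\in2\mathbb{Z}$ forces $e^{\pm i(k-m)\pi}=1$, and completeness is the coordinatewise Fourier basis of the $\pi$-periodic circle. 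I would also introduce the reflection $J\colon L^2([0,\pi])\to L^2([0,\pi])$, $(Jh)(x)=h(\pi-x)$, which is unitary and involutive, sends $e^{\pm ikx}\mapsto e^{\mp ikx}$ for $k\in2\mathbb{Z}$, and intertwines with multiplication operators $M_\phi$ as $JM_\phi=M_{\phi(\pi-\cdot)}J$. This $J$ is the device that converts the action of $A$ into a single pointwise linear algebra problem.

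For case (a) I would define $A$ by $Ae^\nu_k=\varphi^\nu_k$ and verify that, writing $f=\sum f_k e^{-ikx}$ and $g=\sum g_k e^{ikx}$, it has the closed form
\[ A\begin{pmatrix}f\\g\end{pmatrix}=\begin{pmatrix}\alpha_1 e^{i\tau_1(\pi-x)}f+\beta_1 e^{i\tau_2(\pi-x)}Jg\\ \alpha_2 e^{i\tau_1 x}Jf+\beta_2 e^{i\tau_2 x}g\end{pmatrix}, \]
which is bounded, being built from multiplications by bounded nonvanishing functions and from $J$. To invert it I would solve $A\begin{pmatrix}f\\g\end{pmatrix}=\begin{pmatrix}u\\v\end{pmatrix}$: applying $J$ to the second coordinate and using the intertwining rule turns the system into $\alpha_1 X+\beta_1 Y=u$, $\alpha_2 X+\beta_2 Y=Jv$ in the unknowns $X=e^{i\tau_1(\pi-x)}f$ and $Y=e^{i\tau_2(\pi-x)}Jg$. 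The coefficient matrix is exactly $\begin{pmatrix}\alpha_1&\beta_1\\\alpha_2&\beta_2\end{pmatrix}$, invertible by construction (distinct roots $z_1\ne z_2$ for strictly regular $bc$, two independent eigenvectors for periodic type), so $A^{-1}$ is bounded and expressed through (\ref{20a}), multiplications, and $J$. Thus $A$ is an automorphism and $\Phi=AE$ is a Riesz basis.

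Case (b) runs identically, except that $\varphi^2_k$ carries the associated-vector factors, so after the same $J$-reduction the constant matrix is replaced by the $x$-dependent $\begin{pmatrix}\alpha_1&\beta_1-\alpha_1 x\\\alpha_2&\beta_2+\alpha_2(\pi-x)\end{pmatrix}$. The decisive observation is that its determinant is the constant $\Delta=\alpha_1\beta_2-\alpha_2\beta_1+\pi\alpha_1\alpha_2$ — the $x$-dependent terms cancel — so invertibility is again a single scalar condition $\Delta\ne0$. Inserting (\ref{80}) I would check that $\Delta=-\pi bd$ in subcase (i) and $\Delta=\tfrac{\pi}{2}a(b+c)$ in subcases (ii),(iii), both nonzero since $z_*=-(b+c)/2\ne0$ and the relevant coefficients are nonzero there. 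Hence $A^{-1}$ is bounded and $\Phi^1\cup\Phi^2=AE$ is a Riesz basis.

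Finally, for the biorthogonal systems: since the dual system of a basis is unique, it suffices to verify that the functions in (\ref{21*}) (resp.\ (\ref{91})) are biorthogonal to $\Phi$. This is a direct integration in (\ref{0}): the factor $e^{\pm i(k-m)x}$ kills all off-diagonal terms because $k-m\in2\mathbb{Z}$, while the inverse relation (\ref{20a}) yields $\alpha'_1\alpha_1+\alpha'_2\alpha_2=1$ and $\alpha_1\beta'_1+\alpha_2\beta'_2=0$, the latter forcing the $\nu\ne\mu$ cross terms to vanish even when $\tau_1\ne\tau_2$; in case (b) the factor $\bar\Delta^{-1}$ in (\ref{91}) is precisely the normalization coming from inverting the $x$-dependent matrix above (equivalently one computes $(A^{-1})^*E$ directly). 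The main obstacle, as I see it, is not the final algebra but arranging a clean inversion of $A$: the two distinct exponents $\tau_1,\tau_2$ in case (a), and the coupling of eigenvector with associated vector in case (b), both obstruct a naive diagonal factorization, and it is the reflection $J$ together with its intertwining rule that reduces everything to one pointwise $2\times2$ solve whose determinant one must certify nonzero.
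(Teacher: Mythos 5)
Your proposal is correct and follows essentially the same route as the paper: both realize $\Phi$ as the image $A(E)$ of the orthonormal basis (\ref{23}) under an explicit automorphism, invert $A$ by a pointwise $2\times 2$ solve whose determinant is the invertible matrix in (\ref{20a}) (case (a)) or the constant $\Delta$ (case (b)), and then identify $\tilde{\Phi}$ with $(A^{-1})^*(E)$. Your packaging of the reflection $x\mapsto \pi-x$ as the operator $J$, the verification that $\Delta=-\pi bd$ resp.\ $\tfrac{\pi}{2}a(b+c)$ is nonzero (a detail the paper leaves implicit in writing down (\ref{96})), and the uniqueness-of-biorthogonal-system argument are only cosmetic refinements of the paper's proof.
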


\begin{proof}
(a) First we consider the case of strictly regular or periodic type
$bc.$ The system
\begin{equation}
\label{23} E = \left \{e^1_k, e^2_k, \; k \in 2\mathbb{Z} \},
\quad e^1_k :=\begin{pmatrix} e^{i kx}\\
0
\end{pmatrix}, \quad  e^2_k :=\begin{pmatrix} 0
\\   e^{i kx} \end{pmatrix} \right \} ,
\end{equation}
is an orthonormal basis in $L^2 (I, \mathbb{C}^2), \; I= [0,\pi].$ We
have $\Phi = A(E),$ where the operator $A: L^2 (I, \mathbb{C}^2) \to
L^2 (I, \mathbb{C}^2)$ is defined by
\begin{equation}
\label{24} A
\begin{pmatrix}  f\\ g \end{pmatrix} =
\begin{pmatrix}  \alpha_1 e^{i \tau_1 (\pi-x)} f(\pi -x)
\\  \alpha_2 e^{i \tau_1 x} f(x) \end{pmatrix}
+ \begin{pmatrix}  \beta_1 e^{i \tau_2 (\pi-x)} g(\pi -x)
\\  \beta_2 e^{i \tau_2 x} g(x) \end{pmatrix}.
\end{equation}
Since the functions $\displaystyle e^{i \tau_\nu x} $ and
$\displaystyle e^{i \tau_\nu (\pi -x)}, \; \nu =1,2, $ are bounded,
it follows that $A$ is bounded operator. In view of (\ref{20a}) its
inverse operator $A^{-1} $ is given by
\begin{equation}
\label{26} A^{-1}
\begin{pmatrix}  F\\ G \end{pmatrix} =
\begin{pmatrix} e^{-i \tau_1 x}[ \alpha^\prime_1 F (\pi-x) + \alpha^\prime_2
G(x)]
\\  e^{-i \tau_2 x} [\beta^\prime_1 F (\pi-x) + \beta^\prime_2 G(x)] \end{pmatrix},
\end{equation}
so one can easily see that $A^{-1} $ is bounded as well. Thus, $A$ is
an isomorphism, which proves that the system $\Phi$ is a Riesz basis.
By (\ref{p2a}), its biorthogonal system is $\tilde{\Phi}
=(A^{-1})^*(E),$ so we obtain (\ref{21*}) by a direct computation.

(b) In the case of regular but not strictly regular or periodic type
$bc$ we consider the operator $A: L^2 (I, \mathbb{C}^2) \to L^2 (I,
\mathbb{C}^2)$ defined by
\begin{equation}
\label{94} A
\begin{pmatrix}  f\\ g \end{pmatrix} =
\begin{pmatrix}  \alpha_1 e^{i \tau_* (\pi-x)} f(\pi -x)
\\  \alpha_2 e^{i \tau_* x} f(x) \end{pmatrix}
+ \begin{pmatrix}  (\beta_1 - \alpha_1 x) e^{i \tau_* (\pi-x)} g(\pi
-x)
\\  (\beta_2 + \alpha_2 x) e^{i \tau_* x} g(x) \end{pmatrix}.
\end{equation}
Then $\Phi = A(E),$ where $E$ is the orthonormal basis (\ref{23}).
One can easily see that the operator $A$ is bounded and its inverse operator
\begin{equation}
\label{96} A^{-1}
\begin{pmatrix}  F\\ G \end{pmatrix} =\frac{1}{\Delta}
\begin{pmatrix} \left [(\beta_2+ \alpha_2 x) F (\pi-x) -
(\beta_1 - \pi \alpha_1 + \alpha_1 x)   G(x) \right ] e^{-i \tau_*
x}\\ \left [ -\alpha_2 F (\pi-x) + \alpha_1 G(x)\right ]e^{-i \tau_*
x}
\end{pmatrix}
\end{equation}
is also bounded. Thus the system $\Phi $ given by (\ref{83}) and
(\ref{85}) is a Riesz basis. A direct computation of $\tilde{\Phi}
=(A^{-1})^*(E)$ shows that (\ref{91}) holds.

\end{proof}

{\em Corollary.}
By Theorem~\ref{thm01},
\begin{equation}
\label{ec8}
\sup_{k\in 2\mathbb{Z}, \nu\in \{1,2\}}  \sup_{[0,\pi]}
\left | \varphi^\nu_k (x) \right | = C =C(bc)< \infty.
\end{equation}

{\em Remark.} Here and thereafter, we denote by $C$ any
constant that is absolute up to dependence on $bc.$

\begin{Lemma}
\label{lembc}
Let $bc$ be given by (\ref{8a}), and let $A$ be defined,
respectively, by (\ref{24}) if $bc$ is strictly regular and by (\ref{94}) if
$bc$ is regular but not strictly regular.
If $f, g, F, G $ are continuous functions on $[0, \pi]$
such that $ \begin{pmatrix} F  \\ G \end{pmatrix}
= A \begin{pmatrix} f  \\ g \end{pmatrix},$
then $\begin{pmatrix} F \\ G \end{pmatrix}$
satisfies $bc$ if and only if
$\begin{pmatrix} f  \\ g \end{pmatrix}$ satisfies the periodic boundary
conditions $f(0) = f(\pi), \; g(0) = g(\pi).$
\end{Lemma}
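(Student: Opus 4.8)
The plan is to reduce both cases to a homogeneous $2\times 2$ linear system in the two scalar quantities $f(\pi)-f(0)$ and $g(\pi)-g(0)$, and then invoke invertibility of the relevant coefficient matrix. First I would evaluate the components of $\begin{pmatrix}F\\G\end{pmatrix}=A\begin{pmatrix}f\\g\end{pmatrix}$ at the endpoints $x=0,\pi$. In the strictly regular case, using (\ref{24}) and $z_\nu=e^{i\pi\tau_\nu}$, a direct substitution gives
$$F(0)=\alpha_1 z_1 f(\pi)+\beta_1 z_2 g(\pi),\quad G(0)=\alpha_2 f(0)+\beta_2 g(0),$$
$$F(\pi)=\alpha_1 f(0)+\beta_1 g(0),\quad G(\pi)=\alpha_2 z_1 f(\pi)+\beta_2 z_2 g(\pi).$$

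Next I would substitute these into the boundary relations (\ref{8a}) and collect the coefficients of $f(0),g(0),f(\pi),g(\pi)$, then apply the eigenvector identities $A_{23}\begin{pmatrix}\alpha_1\\\alpha_2\end{pmatrix}=-z_1\begin{pmatrix}\alpha_1\\\alpha_2\end{pmatrix}$ and $A_{23}\begin{pmatrix}\beta_1\\\beta_2\end{pmatrix}=-z_2\begin{pmatrix}\beta_1\\\beta_2\end{pmatrix}$ (equivalently $b\alpha_1+a\alpha_2=-z_1\alpha_1$, $d\alpha_1+c\alpha_2=-z_1\alpha_2$, and likewise for $\beta$). Each of the two boundary equations then collapses into a relation in which $f(0)$ pairs with $f(\pi)$ and $g(0)$ with $g(\pi)$, namely
$$\alpha_1 z_1[f(\pi)-f(0)]+\beta_1 z_2[g(\pi)-g(0)]=0,\quad \alpha_2 z_1[f(\pi)-f(0)]+\beta_2 z_2[g(\pi)-g(0)]=0.$$
Since $\begin{pmatrix}\alpha_1 & \beta_1\\\alpha_2 & \beta_2\end{pmatrix}$ is invertible and $z_1,z_2\neq 0$, this forces $f(\pi)=f(0)$ and $g(\pi)=g(0)$; conversely these periodicity conditions obviously make the system hold, which yields the equivalence.

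For regular but not strictly regular $bc$ I would run the same computation with $A$ from (\ref{94}), $\tau_1=\tau_2=\tau_*$ and $z_1=z_2=z_*$. Here $\begin{pmatrix}\alpha_1\\\alpha_2\end{pmatrix}$ is an eigenvector of $A_{23}$ for $-z_*$ while $\begin{pmatrix}\beta_1\\\beta_2\end{pmatrix}$ is an associated vector, so besides the eigenvector identities one must use the generalized relation $(A_{23}+z_* I)\begin{pmatrix}\beta_1\\\beta_2\end{pmatrix}=\pi b\begin{pmatrix}\alpha_1\\\alpha_2\end{pmatrix}$, which is read off directly from (\ref{80}) in each of the subcases (i),(ii),(iii). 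After collecting coefficients the two boundary equations again reduce to a homogeneous system, this time
$$\alpha_1[f(\pi)-f(0)]+\beta_1[g(\pi)-g(0)]=0,\quad \alpha_2[f(\pi)-f(0)]+(\beta_2+\pi\alpha_2)[g(\pi)-g(0)]=0,$$
whose coefficient matrix has determinant exactly $\Delta=\alpha_1\beta_2-\alpha_2\beta_1+\pi\alpha_1\alpha_2\neq 0$, so once more $f(\pi)=f(0)$ and $g(\pi)=g(0)$.

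The step I expect to be the main obstacle is purely algebraic bookkeeping: forcing the second boundary equation to collapse in the non-strictly-regular case. There the coefficient of $g(0)$ produces the combination $\pi(b\alpha_2-d\alpha_1)$, and it is not a priori evident that this equals $-\pi z_*\alpha_2$; the matching relies crucially on the defining identity $z_*=-(b+c)/2$ from (\ref{68}) together with the eigenvector relation $d\alpha_1+c\alpha_2=-z_*\alpha_2$. Once this identity is recognized the algebra closes, and both the nontrivial "only if" and the routine "if" directions follow at once.
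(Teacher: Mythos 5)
Your proof is correct, and it takes a genuinely different route from the paper's. The paper never computes a full collapsed system: it first notes that $A$ induces an invertible map $\tilde{A}:\mathbb{C}^4\to\mathbb{C}^4$ on boundary data, observes that the periodic conditions and the conditions (\ref{8a}) each define two-dimensional subspaces $E_{Per},E_{bc}\subset\mathbb{C}^4$, and then uses dimension counting to reduce the equivalence $\tilde{A}E_{Per}=E_{bc}$ to the single inclusion $\tilde{A}E_{Per}\subset E_{bc}$; this one direction is then verified by splitting $\begin{pmatrix} f\\ g\end{pmatrix}=\begin{pmatrix} f\\ 0\end{pmatrix}+\begin{pmatrix} 0\\ g\end{pmatrix}$, so that each check involves only the eigenvector $\begin{pmatrix}\alpha_1\\ \alpha_2\end{pmatrix}$ (or only the $\beta$-column), and the cross terms you wrestle with never appear. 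Your approach instead keeps both components coupled and derives an exact equivalence: $bc$ for $\begin{pmatrix}F\\G\end{pmatrix}$ holds iff an explicit homogeneous $2\times 2$ system in $f(\pi)-f(0),\,g(\pi)-g(0)$ holds, whose coefficient matrix is invertible (being the eigenvector matrix times $\mathrm{diag}(z_1,z_2)$ in the strictly regular case, and having determinant $\Delta\neq 0$ in the other case). What the paper's route buys is economy — one direction, decoupled algebra, and no need for the generalized-eigenvector identity $(A_{23}+z_*I)\begin{pmatrix}\beta_1\\ \beta_2\end{pmatrix}=\pi b\begin{pmatrix}\alpha_1\\ \alpha_2\end{pmatrix}$ or the identity $b\alpha_2-d\alpha_1=-z_*\alpha_2$ that you correctly isolate and prove. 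What your route buys is explicitness: both implications fall out of one computation, you never need the abstract subspace/dimension argument, and you supply in full the non-strictly-regular verification that the paper waves off with ``one can easily verify'' and ``we omit the details.'' One small point to keep in mind: periodic-type $bc$ (which is regular but not strictly regular) is handled in the paper via the operator (\ref{24}) with $z_1=z_2=z_*$, not via (\ref{94}); your strictly regular computation covers that case verbatim, since it nowhere uses $z_1\neq z_2$, but it is worth saying so explicitly.
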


\begin{proof}
In view of (\ref{24}) and (\ref{94}), the operator $A$ acts
"point-wise" on vector-functions $\begin{pmatrix} f  \\ g
\end{pmatrix}$ multiplying components $f$ and $g$ by smooth
functions. Therefore, $A$ generates a linear operator $\tilde{A}: \,
\mathbb{C}^4 \to \mathbb{C}^4 $ such that
$$
\tilde{A} (f(0), f(\pi), g(0), g(\pi)) = (F(0), F(\pi), G(0),
G(\pi)).
$$
Since $A$ is invertible, $\tilde{A}$ is also invertible.

The periodic boundary conditions $f(0) = f(\pi), \; g(0) = g(\pi)$
define a two-dimensional subspace $E_{Per} \subset \mathbb{C}^4,$
and the boundary conditions (\ref{8a}) define a two-dimensional
subspace $E_{bc} \subset \mathbb{C}^4.$ In fact, the lemma claims
that $\tilde{A}E_{Per}=E_{bc}.$  Since $\tilde{A}$ is an
isomorphism, is is enough to show that $\tilde{A}E_{Per}\subset
E_{bc}.$  In other words, we need only to prove that if
$\begin{pmatrix} f  \\ g \end{pmatrix}$ satisfies the periodic
boundary conditions then $ \begin{pmatrix} F  \\ G \end{pmatrix} = A
\begin{pmatrix} f  \\ g \end{pmatrix}$  satisfies (\ref{8a}).

Since
$$ \begin{pmatrix} F  \\ G \end{pmatrix}
= A\begin{pmatrix} f  \\ g \end{pmatrix}
= A\begin{pmatrix} f  \\ 0 \end{pmatrix} + A\begin{pmatrix} 0  \\ g \end{pmatrix},
$$
it is enough to show that $A\begin{pmatrix} f  \\ 0 \end{pmatrix}$
and $A\begin{pmatrix} 0  \\ g \end{pmatrix}$
satisfy $bc.$  Set
$\begin{pmatrix} y_1  \\ y_2 \end{pmatrix}:=
A\begin{pmatrix} f  \\ 0 \end{pmatrix}.$

If $bc$ is strictly regular or periodic type
boundary condition, then by (\ref{14}) and
(\ref{24}) it follows that
$$
y_1 (0) =\alpha_1 z_1 f(\pi), \quad
y_1 (\pi) =\alpha_1  f(0), \quad  y_2 (0) =\alpha_2  f(0), \quad
y_2 (\pi) =\alpha_2 z_1 f(\pi).
$$
Therefore, taking into account that $f(\pi) = f(0),$ we obtain
that $\begin{pmatrix} y_1  \\ y_2 \end{pmatrix}$ satisfies
the boundary conditions (\ref{8a}):
$$
\begin{pmatrix} 1 & b & a & 0\\  0  &  d  &  c  &  1 \end{pmatrix}
\begin{pmatrix} \alpha_1  z_1 f(\pi)\\ \alpha_1 f(0) \\
\alpha_2 f(0)
\\ \alpha_2  z_1 f(\pi)\end{pmatrix} =  f(0)
\begin{pmatrix} b+z_1  &  a \\  d &  c + z_1 \end{pmatrix}
\begin{pmatrix} \alpha_1  \\ \alpha_2 \end{pmatrix}= 0
$$
(due to the definition of
$\begin{pmatrix} \alpha_1  \\ \alpha_2 \end{pmatrix},$
see the lines prior to (\ref{20a})).

If $bc $ is regular but not strictly regular,
then the characteristic equation (\ref{13})
has a double root $z_* = - (b+c)/2 = \exp (i \pi \tau_*),$
and by (\ref{94}) we have
$$
y_1 (0) =\alpha_1 z_* f(\pi), \quad
y_1 (\pi) =\alpha_1  f(0), \quad  y_2 (0) =\alpha_2  f(0), \quad
y_2 (\pi) =\alpha_2 z_* f(\pi).
$$
Using (\ref{80}), one can easily verify that
$\begin{pmatrix} y_1  \\ y_2 \end{pmatrix}$ satisfies
the boundary conditions (\ref{8a}).

The proof that
$A\begin{pmatrix} 0  \\ g \end{pmatrix}$
satisfies the boundary conditions (\ref{8a}) is similar;
we omit the details.

\end{proof}

\section{Estimates for the resolvent of $L_{bc} $ and localization of spectra}

The operator $L_{bc} (v) $
maybe considered as a perturbation of the free
operator   $ L^0_{bc}.$  We study   $L_{bc} (v)$
by considering its Fourier matrix representation
with respect to the Riesz basis $\Phi = \Phi (bc) $
consisting of root functions of  $ L^0_{bc},$ which was
constructed in Theorem~\ref{thm01}.

For strictly regular $bc,$
the spectrum of $L_{bc}^0$ consists of two disjoint
sequences of simple eigenvalues
$$
Sp (L_{bc}^0) = \{\tau_1 +k, \; k \in 2\mathbb{Z} \} \cup \{\tau_2
+k, \; k \in 2\mathbb{Z} \}, $$
where $\tau_1, \tau_2  $ depend on $bc.$
For regular but not strictly regular $bc$ the
spectrum of $L_{bc}^0$ consists of eigenvalues of algebraic multiplicity 2
and has the form
$$
Sp \, (L^0_{bc}) = \{ \tau_* +k,\;\; k \in 2\mathbb{Z} \},
$$
where $\tau_* $ depends on $bc.$
In both cases the resolvent operator $R_{bc}^0 (\lambda) = (\lambda
-L_{bc}^0)^{-1}$ is well defined for $\lambda \not\in Sp (L_{bc}^0),
$ and we have
\begin{equation}
\label{1.10} R_{bc}^0 (\lambda) \varphi^\mu_k = \frac{1}{\lambda -
\tau_\nu -k}\varphi^\nu_k, \quad   k \in 2\mathbb{Z}, \;\;
\nu \in \{1,2\},
\end{equation}
where we put $\tau_1 = \tau_2 = \tau_*$
in the case of regular but not strictly regular $bc.$
Moreover, $R_{bc}^0 (\lambda)$ acts continuously from $L^2 ([0,\pi], \mathbb{C}^2)$
into $L^\infty ([0,\pi], \mathbb{C}^2).$
Indeed, if $F = \sum_{k,\nu} F^\nu_k  \varphi^\nu_k,$
then
$$R_{bc}^0 (\lambda) F = \sum_{k,\nu} \frac{F^\nu_k}{\lambda - k -\tau_\nu}  \varphi^\nu_k. $$
By (\ref{ec8}), $
\|R_{bc}^0 (\lambda) F \|_\infty \leq
\sum_{k,\nu} \frac{C|F^\nu_k |}{|\lambda - k -\tau_\nu |}, $
so by the Cauchy inequality,
 $$ \|R_{bc}^0 (\lambda) F \|_\infty
\leq C\left ( \sum_{k,\nu} \frac{1}{|\lambda - k -\tau_\nu|^2} \right )^{1/2}
\left ( \sum_{k,\nu} |F^\nu_k |^2 \right )^{1/2}.
$$
Thus, in view of (\ref{p3}),
\begin{equation}
\label{1.15} \|R_{bc}^0 (\lambda)  \|_{L^2 \to L^\infty} \leq C_1
[a(\lambda -\tau_1) + a(\lambda -\tau_2)]^{1/2},
\end{equation}
where $C_1= C_1 (bc) $ and $a(\lambda ) $ is defined and estimated in
the following lemma.

\begin{Lemma}
\label{lema1}
Consider the function
\begin{equation}
\label{a01}
a(\lambda) = \sum_{k \in 2\mathbb{Z}} \frac{1}{|\lambda -k|^2}, \quad
\lambda \not\in 2\mathbb{Z}.
\end{equation}
If $\lambda = m+ \xi + it $ with $m \in 2\mathbb{Z} $ and  $ |\xi|<1, $  then
\begin{equation}
\label{a02}
a(\lambda) \leq \frac{1}{\xi^2 + t^2} + \frac{8}{1+2|t|}.
\end{equation}

\end{Lemma}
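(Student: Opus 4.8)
The plan is to reindex the sum so that the even integers $k$ are written as $k = m + 2j$ with $j \in \mathbb{Z}$; since $\lambda - k = \xi + it - 2j$, this turns the sum into
\[
a(\lambda) = \sum_{j \in \mathbb{Z}} \frac{1}{(\xi - 2j)^2 + t^2}.
\]
The term $j = 0$ is exactly $\frac{1}{\xi^2 + t^2}$, which accounts for the first summand in (\ref{a02}), so everything reduces to estimating the tail $\sum_{j \neq 0}(\cdots)$ by $\frac{8}{1+2|t|}$.

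For the tail I would use the hypothesis $|\xi| < 1$ to push every remaining denominator onto the odd integers. For $j \geq 1$ we have $|\xi - 2j| = 2j - \xi > 2j - 1$, and for $j \leq -1$ we have $|\xi - 2j| = 2|j| + \xi > 2|j| - 1$; in both cases $|\xi - 2j| > 2|j| - 1$. Hence, after grouping the indices $\pm j$,
\[
\sum_{j \neq 0} \frac{1}{(\xi - 2j)^2 + t^2} \leq 2 \sum_{n=1}^\infty \frac{1}{(2n-1)^2 + t^2},
\]
and it remains to prove the scalar inequality $\sum_{n=1}^\infty \frac{1}{(2n-1)^2 + t^2} \leq \frac{4}{1+2|t|}$, uniformly in $t$.

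To control this last sum I would compare with an integral: the $n = 1$ term contributes $\frac{1}{1+t^2}$, while monotonicity of $x \mapsto ((2x-1)^2 + t^2)^{-1}$ on $[1, \infty)$ gives $\sum_{n \geq 2}(\cdots) \leq \int_1^\infty \frac{dx}{(2x-1)^2 + t^2} = \frac{\arctan|t|}{2|t|}$. (Alternatively one may invoke the closed form $\sum_{n\geq 1}((2n-1)^2+t^2)^{-1} = \frac{\pi}{4|t|}\tanh(\pi|t|/2)$.) Writing $s = |t|$, the target becomes $\frac{1}{1+s^2} + \frac{\arctan s}{2s} \leq \frac{4}{1+2s}$, and I would finish by a short case split: for $0 \leq s \leq 2$ use the monotone bound $\frac{\arctan s}{2s} \leq \tfrac12$, and for $s \geq 2$ use $\frac{\arctan s}{2s} \leq \frac{\pi}{4s}$; in each range the resulting elementary rational/radical inequality holds with room to spare.

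The genuinely routine steps are the reindexing and the comparison with the odd integers; the only place demanding care is this final uniform-in-$t$ scalar estimate, where one must check that the chosen constant $8$ (equivalently $4$ after halving) dominates the sum across both the small-$t$ and large-$t$ regimes. The splitting at $s = 2$ is convenient because both elementary bounds for $\frac{\arctan s}{2s}$ are simultaneously valid there, so no boundary case is lost.
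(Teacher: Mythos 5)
Your proof is correct, and it shares the paper's opening moves: reduce to the strip over $m=0$ (your reindexing $k=m+2j$ is the paper's periodicity $a(\lambda+2)=a(\lambda)$), split off the nearest term $\frac{1}{\xi^2+t^2}$, and use $|\xi|<1$ to dominate the tail by twice the sum over odd integers $\sum_{n\ge 1}\frac{1}{(2n-1)^2+t^2}$. Where you genuinely diverge is in how this tail sum is forced under $\frac{4}{1+2|t|}$ uniformly in $t$. The paper uses the pointwise inequality $(2n-1)^2+t^2\ \ge\ \tfrac12\,(2n-1+|t|)^2$, which merges the lattice variable and $|t|$ into a single shifted denominator; a single integral comparison then gives $\sum_{n\ge 1}\frac{1}{(2n-1)^2+t^2}\le \int_{3/2}^\infty \frac{2\,du}{(u-1+|t|)^2}=\frac{4}{1+2|t|}$ in one stroke, with no case analysis and with the target constant appearing automatically. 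Your route instead evaluates the tail via $\frac{\arctan|t|}{2|t|}$ (or the exact closed form $\frac{\pi}{4|t|}\tanh(\pi|t|/2)$) and then verifies the scalar inequality $\frac{1}{1+s^2}+\frac{\arctan s}{2s}\le\frac{4}{1+2s}$ by splitting at $s=2$; I checked both regimes (on $[0,2]$ the inequality reduces to $2s^3-7s^2+6s-5\le 0$, whose maximum there is about $-3.5$; on $[2,\infty)$ one has $\frac{1}{1+s^2}+\frac{\pi}{4s}\le\frac{1.29}{s}<\frac{1.6}{s}\le\frac{4}{1+2s}$), so your "room to spare" claim is justified. The trade-off: the paper's quadratic-mean trick is shorter and constant-free in its logic, while your computation is more explicit and even reveals that the constant $8$ is far from sharp (the true tail is $\asymp\frac{1}{1+|t|}$ with constant $\frac{\pi}{2}$), at the price of a two-regime elementary verification.
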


\begin{proof}
  Since $a(\lambda + 2) = a(\lambda), $
it is enough to consider the case where $m=0. $
Then we have
$$ a(\lambda ) =   \sum_{k \in 2\mathbb{Z}}  \frac{1}{(\xi -k)^2 + t^2}
\leq \frac{1}{\xi^2 + t^2} + 2\sum_{k \in 2\mathbb{N}}
\frac{1}{(k-1)^2 + t^2}.$$ Since $ (k-1)^2 + t^2 \geq \frac{1}{2}
(k-1+|t|)^2, $ we obtain
$$
\sum_{k \in 2\mathbb{N}}  \frac{1}{(k-1)^2 + t^2} \leq
\int_{3/2}^\infty \frac{2}{(u-1+|t|)^2}du = \frac{4}{1+2|t|},$$ which
completes the proof.

\end{proof}

Let $V:  L^2 (I, \mathbb{C}^2) \to L^2 (I, \mathbb{C}^2)$ be the operator of
multiplication by the matrix $v(x) = \left [
\begin{array}{cc}0&P(x)\\Q(x)&0
\end{array} \right ],$  i.e.,
$ V \begin{pmatrix}y_1 \\ y_2  \end{pmatrix} =
 \begin{pmatrix} Py_2 \\ Qy_1 \end{pmatrix}.
$ The operator $V$ could be unbounded in $L^2 (I, \mathbb{C}^2)$ but
it acts continuously from $ L^\infty (I, \mathbb{C}^2)$ into $L^2 (I,
\mathbb{C}^2).$  Indeed, if $y_1, y_2 \in L^\infty  $ then
$$
\left \| \begin{pmatrix} Py_2 \\ Qy_1 \end{pmatrix}  \right \|^2
= \frac{1}{\pi} \int_0^\pi ( |Qy_1|^2 + |Py_2|^2) dx \leq
\|v\|^2 \cdot  \left \| \begin{pmatrix} y_1 \\ y_2 \end{pmatrix}  \right \|_\infty^2,
$$
where we set for convenience
\begin{equation}
\label{1.16} \|v\|^2 = \|P\|^2 + \|Q\|^2, \quad \left \|
\begin{pmatrix} y_1 \\ y_2 \end{pmatrix}  \right \|_\infty= max
(\|y_1\|_\infty, \|y_2\|_\infty).
\end{equation}
Therefore,
\begin{equation}
\label{1.17}
\|V\|_{L^\infty \to L^2}  \leq \|v\|.
\end{equation}

The following lemma follows immediately from the explicit form of the
bases $\Phi $ and their biorthogonal systems given in
Theorem~\ref{thm01}.

\begin{Lemma}
\label{lem10} The matrix representation of $V$ with respect to the
basis $\Phi$ has the form
\begin{equation}\label{m1}
V \sim \left [ \begin{array}{cc} V^{11} & V^{12}\\V^{21}&V^{22}
\end{array} \right ], \quad  V^{\eta\nu}= \left ( V^{\eta\nu}_{jk} \right
)_{j,k\in 2\mathbb{Z}}, \quad \eta, \nu \in \{1,2\},
\end{equation}
\begin{equation}\label{m2}
 V^{\eta\nu}_{jk} =\langle V\varphi^\nu_k, \tilde{\varphi}^\eta_j \rangle =
 w^{\eta\nu} (j+k),
\end{equation}
where
\begin{equation}\label{m3}
w^{\eta\nu}=\left
 (w^{\eta\nu}(m)\right ) \in \ell^2 (2\mathbb{Z}), \quad
 \|w^{\eta\nu}\|_{\ell^2} \leq C \|v\|,
 \end{equation}
with $C=C(bc).$
\end{Lemma}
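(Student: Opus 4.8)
The plan is to obtain both assertions of the lemma by a direct computation of the entries $V^{\eta\nu}_{jk} = \langle V\varphi^\nu_k, \tilde\varphi^\eta_j\rangle$ from the explicit formulas of Theorem~\ref{thm01}, reading off the convolution (Toeplitz) structure from the exponential factors and the $\ell^2$ bound from Bessel's inequality. First I would record how $V$ acts on the basis functions: since $V\begin{pmatrix} y_1\\ y_2\end{pmatrix} = \begin{pmatrix} Py_2\\ Qy_1\end{pmatrix}$, multiplication by $v$ interchanges the two components of $\varphi^\nu_k$ (inserting the factors $P$ and $Q$). The decisive structural observation is that in every formula (\ref{21}), (\ref{83}), (\ref{85}) the first component of $\varphi^\nu_k$ carries the exponential $e^{-ikx}$ and the second carries $e^{ikx}$, and the same holds, with $j$ in place of $k$, for $\tilde\varphi^\eta_j$ in (\ref{21*}), (\ref{91}). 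Hence the first component of $V\varphi^\nu_k$ is proportional to $P\,e^{ikx}$ and its second to $Q\,e^{-ikx}$.

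Next I would form the scalar product (\ref{0}). In the first summand the factor $e^{ikx}$ meets $\overline{e^{-ijx}}=e^{ijx}$, and in the second $e^{-ikx}$ meets $\overline{e^{ijx}}=e^{-ijx}$; in both cases the $j,k$-dependence collapses to $e^{\pm i(j+k)x}$. All the remaining $x$-dependent factors — the exponentials $e^{\pm i\tau_\nu x}$, $e^{\pm i\overline{\tau_\eta}x}$ combining into $e^{\pm 2i\tau x}$, and, in the regular-but-not-strictly-regular case, the polynomial prefactors such as $\beta_1-\alpha_1 x$ — are independent of $j,k$. Collecting the $bc$-determined constants $\alpha_i,\alpha'_i,\beta_i,\beta'_i$ together with these factors into fixed bounded functions $g^{\eta\nu}_{P},g^{\eta\nu}_{Q}$ on $[0,\pi]$, one gets
\[ V^{\eta\nu}_{jk} = \frac{1}{\pi}\int_0^\pi g^{\eta\nu}_{P}(x)\,P(x)\,e^{i(j+k)x}\,dx + \frac{1}{\pi}\int_0^\pi g^{\eta\nu}_{Q}(x)\,Q(x)\,e^{-i(j+k)x}\,dx =: w^{\eta\nu}(j+k). \]
Since $j,k\in 2\mathbb{Z}$ forces $m=j+k\in 2\mathbb{Z}$, this is exactly the claim (\ref{m2}).

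For the $\ell^2$ estimate I would use that $\{e^{imx}:m\in 2\mathbb{Z}\}$ is an orthonormal basis of $L^2([0,\pi],\mathbb{C})$ for the normalized inner product (\ref{0}) — the componentwise version of the basis (\ref{23}). Each summand defining $w^{\eta\nu}(m)$ is, up to the constant absorbed into $g^{\eta\nu}_{P}$ or $g^{\eta\nu}_{Q}$, the Fourier coefficient at $\mp m$ of the $L^2$ function $g^{\eta\nu}_{P}P$, respectively $g^{\eta\nu}_{Q}Q$. As $m$ runs over $2\mathbb{Z}$ so does $\mp m$, so by Bessel's (Parseval's) inequality the $\ell^2(2\mathbb{Z})$ norm of each coefficient sequence is bounded by the $L^2$ norm of the corresponding function. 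Because $\|g^{\eta\nu}_{P}\|_\infty,\|g^{\eta\nu}_{Q}\|_\infty\le C(bc)$ on $[0,\pi]$, this yields $\|w^{\eta\nu}\|_{\ell^2}\le C(\|P\|+\|Q\|)\le C\|v\|$ with $C=C(bc)$, which is (\ref{m3}).

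I do not expect a serious obstacle here — the lemma is, as the text announces, bookkeeping built on the explicit bases. The only points that genuinely require care, and that force the constant to depend on $bc$, are that $\tau_1,\tau_2,\tau_*$ may be complex, so that $|e^{\pm 2i\tau x}|=e^{\mp 2(\mathrm{Im}\,\tau)x}$ is bounded on $[0,\pi]$ only by a $bc$-dependent constant, and that in the regular-but-not-strictly-regular case the extra polynomial factor $x$ must be absorbed into $g^{\eta\nu}_{P},g^{\eta\nu}_{Q}$; since $x$ is bounded on $[0,\pi]$ this enlarges the sup-norms $\|g^{\eta\nu}_{P}\|_\infty,\|g^{\eta\nu}_{Q}\|_\infty$ but does not affect the $L^2$ argument. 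Thus the \emph{same} computation covers all regular $bc$ simultaneously.
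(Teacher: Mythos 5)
Your proof is correct and takes essentially the same route as the paper's own (very terse) proof: both read off from the explicit formulas of Theorem~\ref{thm01} that $\langle V\varphi^\nu_k,\tilde\varphi^\eta_j\rangle$ is a sum of a Fourier coefficient of $g^{\eta\nu}P$ at $-(j+k)$ and one of $h^{\eta\nu}Q$ at $(j+k)$, with $g^{\eta\nu},h^{\eta\nu}$ smooth functions on $[0,\pi]$ determined by $bc$ alone, and then obtain the bound $\|w^{\eta\nu}\|_{\ell^2}\le C\|v\|$ from Bessel's inequality. Your write-up merely supplies the bookkeeping (sign conventions, the role of the complex $\tau$'s, and the polynomial factors in the non-strictly-regular case) that the paper leaves implicit.
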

\begin{proof}
Indeed, in view of the explicit formulas for the basis
$\Phi=\{\varphi_k^\nu \} $ and its biorthogonal system $\tilde{\Phi}
= \{\tilde{\varphi}^{\nu} \}$ it follows that (\ref{m2}) holds with
\begin{equation}\label{m4}
 w^{\eta\nu} (m)=  p^{\eta\nu} (-m) + q^{\eta\nu} (m),
\end{equation}
where $p^{\eta\nu} (k), \; q^{\eta\nu} (k), \; k \in 2\mathbb{Z}$
are, respectively,  the Fourier coefficients of functions of the form
\begin{equation}\label{m5}
g^{\eta\nu} (x) P(x), \;\;  h^{\eta\nu} (x) Q(x), \quad g^{\eta\nu},
h^{\eta\nu} \in C^\infty ([0,\pi]), \;\; \eta, \nu = 1,2.
\end{equation}

\end{proof}

For convenience, we set
\begin{equation}
\label{1.18} r(m) = \max \{|w^{\mu\nu} (m)|, \; \mu, \nu =1,2 \},
\quad m \in 2\mathbb{Z};
\end{equation}
then
\begin{equation}
\label{1.18a} r= (r(m)) \in \ell^2 (2\mathbb{Z}), \quad \|r\|\leq C
\|v\|.
\end{equation}

The standard perturbation formula for the resolvent
\begin{equation}
\label{1.21}
R_{bc} (\lambda) =  R_{bc}^0 (\lambda) +R_{bc}^0 (\lambda)VR_{bc}^0
(\lambda)+\sum_{m=2}^\infty R_{bc}^0 (\lambda)(VR_{bc}^0 (\lambda))^m
\end{equation}
is valid if the series on the right converges. From (\ref{1.15}) and
(\ref{1.17}) it follows that $ VR_{bc}^0 (\lambda) $ is a continuous
operator in $L^2 ([0,\pi]), \mathbb{C}^2) $ which norm does not
exceed
\begin{equation}
\label{2.22} \|VR_{bc}^0 (\lambda)\| \leq \|V\|_{L^\infty \to L^2}
\|R_{bc}^0 (\lambda)\|_{L^2 \to L^\infty} \leq \|v\|\cdot [a(\lambda
-\tau_1) + a(\lambda -\tau_2)]^{1/2}.
\end{equation}
But this estimate does not guarantee the convergence in (\ref{1.21})
for $\lambda$'s which are close to the real line. Therefore,  next we
estimate the norms $\|R_{bc}^0 (\lambda)VR_{bc}^0 (\lambda)\|_{L^2
\to L^\infty}.$

But first we introduce some notations. For each $\ell^2$--sequence
$x=(x(j))_{j \in 2\mathbb{Z}} $ and $m>0$ we set
\begin{equation}
\label{2.23} \mathcal{E}_m (x) = \left (   \sum_{|j|\geq m} |x(j)|^2
\right )^{1/2}.
\end{equation}
In a case of strictly regular $bc,$ we subdivide the complex plane
$\mathbb{C}$ into strips
\begin{equation}
\label{2.24} H_m = \left \{z\in \mathbb{C}: \; -1 \leq Re \left
(z-m- \frac{\tau_1 +\tau_2}{2}  \right ) \leq 1  \right \}, \quad m
\in 2\mathbb{Z},
\end{equation}
and set
\begin{equation}
\label{2.25} H^N = \bigcup_{|m|\leq N}    H_m,
\end{equation}
\begin{equation}
\label{2.29}  \rho: = \frac{1}{2}\min ( 1-|Re (\tau_1 -\tau_2)|/2,\,
|\tau_1 -\tau_2|/2),
\end{equation}
\begin{equation}
\label{2.30} D_m^\mu =\{z\in \mathbb{C}: \;\; |z-\tau_\mu -m| < \rho
\}, \quad m \in 2\mathbb{Z}.
\end{equation}
and
\begin{equation}
\label{2.26}  R_{NT} = \left \{z= x+it: \;  \;
 \left |  x-Re \, \frac{\tau_1 +\tau_2}{2}   \right | < N+1,\;
  |t| < T \right \},
\end{equation}
where  $N \in 2\mathbb{N}$  and $T>0.$

In case of regular but not strictly regular boundary conditions  we
subdivide the complex plane $\mathbb{C}$ into strips
\begin{equation}
\label{2.24a} H_m = \left \{z\in \mathbb{C}: \; -1 \leq Re \left
(z-m- \tau_*  \right ) \leq 1  \right \}, \quad m \in 2\mathbb{Z},
\end{equation}
and set
\begin{equation}
\label{2.25a} H^N = \bigcup_{|m|\leq N}  H_m,
\end{equation}
\begin{equation}
\label{2.30a} D_m =\{z\in \mathbb{C}: \;\; |z-\tau_* -m| < 1/4 \},
\quad m \in 2\mathbb{Z}.
\end{equation}
and
\begin{equation}
 \label{2.26a}  R_{NT} = \left \{z= x+it: \;  \;
 \left |  x-Re \, \tau_*   \right | < N+1,\;     |t| < T \right \},
\end{equation}
where $N \in 2\mathbb{N}$ and $T>0.$

\begin{Lemma}
\label{loc1}
(a) For $\lambda \in H_m
\setminus ( D^1_m \cup D^2_m
 ) $ in a case of strictly regular $bc,$
or for $\lambda \in H_m  \setminus  D_m $ in a case of regular but not
strictly regular $bc,$ we have
\begin{equation}
\label{2.31} \|R_{bc}^0 (\lambda)VR_{bc}^0 (\lambda)\|_{L^2 \to
L^\infty} \leq  C \left ( \frac{\|v\|}{\sqrt{|m|}} +
(\mathcal{E}_{|m|} (r))  \right ) \quad \text{for} \;\; m\neq 0,
\end{equation}
where $C=C(bc).$

(b) If $\; T\geq 1 + 2|\tau_1| +2|\tau_2|, $   then
\begin{equation}
\label{2.32} \|R_{bc}^0 (\lambda)VR_{bc}^0 (\lambda)\|_{L^2 \to
L^\infty} \leq   C \frac{\|v \|}{T} \quad \text{for} \;\;  |Im \,
\lambda| \geq T,
\end{equation}
where $C=C(bc).$
\end{Lemma}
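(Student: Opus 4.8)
The plan is to reduce both parts to a single discrete trilinear sum obtained from the matrix representation of $V$, and then, for part (a), to exploit the Hankel-type structure $V^{\eta\nu}_{jk}=w^{\eta\nu}(j+k)$ together with the resonance geometry of the strip $H_m$. First I would expand an arbitrary $F=\sum_{k,\nu}F^\nu_k\varphi^\nu_k$ and apply (\ref{1.10}) and (\ref{m2}) to get
$$R_{bc}^0(\lambda)VR_{bc}^0(\lambda)F=\sum_{j,\eta}\frac{1}{\lambda-j-\tau_\eta}\Big(\sum_{k,\nu}\frac{w^{\eta\nu}(j+k)}{\lambda-k-\tau_\nu}\,F^\nu_k\Big)\varphi^\eta_j.$$
Taking the $L^\infty$-norm, using the uniform bound (\ref{ec8}) for $|\varphi^\eta_j|$ and $|w^{\eta\nu}(j+k)|\le r(j+k)$, everything is dominated by
$$\Sigma:=\sum_{j,\eta}\frac{1}{|\lambda-j-\tau_\eta|}\sum_{k,\nu}\frac{r(j+k)}{|\lambda-k-\tau_\nu|}\,|F^\nu_k|.$$
By the Riesz basis inequality (\ref{p3}) one has $\big(\sum_{k,\nu}|F^\nu_k|^2\big)^{1/2}\le C\|F\|_{L^2}$, so it suffices to bound $\Sigma$ by the right-hand sides of (\ref{2.31})/(\ref{2.32}) times $\|F\|_{\ell^2}$. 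Throughout write $a_\nu=a(\lambda-\tau_\nu)=\sum_k|\lambda-k-\tau_\nu|^{-2}$, so that $\sum_{k,\nu}|\lambda-k-\tau_\nu|^{-2}=a_1+a_2$.

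For part (a) the key is to split $\Sigma=\Sigma'+\Sigma''$ according to whether $|j+k|\ge|m|$ or $|j+k|<|m|$. In the first region $j+k$ is a tail index, and a single Cauchy--Schwarz in the pair $(j,k)$ that keeps the cutoff $|j+k|\ge|m|$ only on the factor carrying $r(j+k)$ gives $\Sigma'\le(a_1+a_2)\big(2\sum_{|l|\ge|m|}r(l)^2\big)^{1/2}\|F\|_{\ell^2}$; since Lemma~\ref{lema1} together with $|\lambda-\tau_\nu-m|\ge\rho$ (from (\ref{2.29}) and the exclusion of $D^1_m\cup D^2_m$) yields $a_1+a_2\le C$ on $H_m\setminus(D^1_m\cup D^2_m)$, this is $\le C\,\mathcal{E}_{|m|}(r)\,\|F\|_{\ell^2}$. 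In the second region $|j+k|<|m|$ forces $|j-m|+|k-m|\ge|j+k-2m|>|m|$, so at least one of $j,k$ is at distance $>|m|/2$ from $m$; splitting $\Sigma''$ into these two subcases and using that $\lambda\in H_m$ together with $|Re(\tau_1-\tau_2)|\le1$ (from (\ref{14})) gives $|\lambda-j-\tau_\eta|\ge|j-m|-\tfrac32$, hence the tail estimate $\sum_{|j-m|>|m|/2}|\lambda-j-\tau_\eta|^{-2}\le C/|m|$. A Cauchy--Schwarz in $(j,k)$ then turns this into $C\|v\|/\sqrt{|m|}\,\|F\|_{\ell^2}$, the other factor being $\big(\sum_j r(j+\,\cdot\,)^2\big)^{1/2}=\|r\|\le C\|v\|$ by (\ref{1.18a}). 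Adding the pieces gives (\ref{2.31}). The main obstacle is exactly this bookkeeping: the naive three-factor bound $\|R^0\|\,\|V\|\,\|R^0\|$ only produces $C(a_1+a_2)\le C$ with no decay in $m$, so one must see that in every term either $j+k$ is large (tail of $r$) or one resolvent is off-resonant (tail of $a$), and the split by $|j+k|$ versus $|m|$ is what separates these two mechanisms into the two summands of (\ref{2.31}).

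For part (b) no splitting is needed and the naive estimate suffices. When $|Im\,\lambda|\ge T$ with $T\ge1+2|\tau_1|+2|\tau_2|$, one has $|Im(\lambda-\tau_\nu)|\ge T-|\tau_\nu|>T/2$, so Lemma~\ref{lema1} gives $a_\nu\le (T/2)^{-2}+8(1+T)^{-1}\le C/T$. Hence $\|R_{bc}^0(\lambda)\|_{L^2\to L^\infty}\le C(a_1+a_2)^{1/2}$ by (\ref{1.15}), and in the factorization $R^0VR^0=R^0\cdot(VR^0)$ combined with (\ref{1.17}) this yields
$$\big\|R_{bc}^0(\lambda)VR_{bc}^0(\lambda)\big\|_{L^2\to L^\infty}\le\|R_{bc}^0(\lambda)\|_{L^2\to L^\infty}^2\,\|V\|_{L^\infty\to L^2}\le C(a_1+a_2)\|v\|\le C\frac{\|v\|}{T},$$
which is (\ref{2.32}). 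In all of the above, the regular but not strictly regular case is handled identically after setting $\tau_1=\tau_2=\tau_*$ and replacing the excluded set $D^1_m\cup D^2_m$ by the single disk $D_m$.
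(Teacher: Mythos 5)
Your proof is correct, and it reaches the paper's estimates through a genuinely different decomposition of the same double sum. Both you and the paper start identically: expand $F$ in the basis $\Phi$, use the matrix representation of Lemma~\ref{lem10} and the bounds (\ref{ec8}), (\ref{1.18}) to dominate $\|R^0_\lambda VR^0_\lambda F\|_\infty$ by the weighted sum over $(j,k)$, and both exploit the same two decay mechanisms (large Hankel index $|j+k|\ge |m|$ producing the tail $\mathcal{E}_{|m|}(r)$, versus an off-resonant resolvent index producing $\|v\|/\sqrt{|m|}$). The difference is organizational: the paper first isolates the resonant indices, splitting into the diagonal term $j=k=m$, the sums $\sigma_1$ ($j=m$), $\sigma_2$ ($k=m$) and $\sigma_3$ ($j,k\neq m$), and only then applies the dichotomy inside each piece, which requires the pointwise bound (\ref{2.36}) and roughly six separate Cauchy--Schwarz estimates; you instead split once, globally, by $|j+k|\gtrless |m|$, observing that $|j+k|<|m|$ forces $|j-m|+|k-m|>|m|$, so the resonant terms never need special treatment beyond $a_1+a_2\le C$ on $H_m\setminus(D^1_m\cup D^2_m)$. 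Your route is more economical (three Cauchy--Schwarz applications with the numerator/denominator splitting, which indeed works as single applications over pairs $(j,k)$ once $|F_k^\nu|$ is grouped with $r(j+k)$ and both resolvent factors are grouped together); the paper's route produces the explicit elementary sums $\sigma_1,\sigma_2,\sigma_3$ whose estimation technique recurs in Lemma~\ref{lema2} of Section 4. Part (b) of your argument is identical to the paper's. One small point to tighten: when you invoke Lemma~\ref{lema1} to get $a_1+a_2\le C$, note that for $\lambda\in H_m$ one only has $|Re(\lambda-\tau_\nu-m)|\le 3/2$, not $<1$, so the lemma should be applied relative to the even integer nearest to $\lambda-\tau_\nu$; the excluded disks control the term $k=m$ (via $\rho$ from (\ref{2.29})), while the terms $k\neq m$ are bounded by (\ref{2.36}), exactly as you already use in the region $|j+k|<|m|$.
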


\begin{proof}
In view of the matrix representations of the operators $V$ and
$R_{bc}^0 (\lambda)$ given in (\ref{1.10}) and Lemma~\ref{lem10},  if
$ F = \sum_{k,\nu} F_k^\nu \varphi_k^\nu $ then
$$
R_{bc}^0 (\lambda)VR_{bc}^0 (\lambda)F =
\sum_{k,\nu} \sum_{j, \eta} \frac{w^{\eta \nu} (j+k) F_k^\nu}
{(\lambda - j - \tau_\eta)(\lambda - k -\tau_\nu)} \varphi_k^\nu,
$$
where $\tau_1 = \tau_2 = \tau_*$ in case of regular
but not strictly regular $bc.$

For convenience, we set
\begin{equation}
\label{2.34}
g_k = \max (|F_k^1|, |F_k^2|), \quad k \in 2\mathbb{Z}.
\end{equation}
Then $ g = (g_k) \in \ell^2 $ and $\|g\| \leq C \|F\|,$ where $C=
C(bc).$

By (\ref{ec8}) and (\ref{1.18}),
\begin{equation}
\label{2.35}
\|R_{bc}^0 (\lambda)VR_{bc}^0 (\lambda)F\|_\infty \leq  C
\sum_{k,\nu} \sum_{j, \eta} \frac{r (j+k) |g_k|}
{|\lambda - j - \tau_\eta||\lambda - k -\tau_\nu |}.
\end{equation}

Let $\lambda  \in H_m. $ Then
$\lambda = m+\xi +
Re \frac{\tau_1 + \tau_2}{2} + i t $ with $|\xi| \leq 1 $
(in case of regular but not strictly regular $bc$
$ \tau_1 = \tau_2= \tau_*$).
Therefore, in view of (\ref{14}), we have for $k\neq m $
\begin{equation}
\label{2.36}
|\lambda -k -\tau_\nu | \geq |m-k|- 1- \frac{1}{2}|Re (\tau_1 -  \tau_2)|
\geq |m-k| - \frac{3}{2} \geq \frac{1}{4} |m-k|.
\end{equation}

For strictly regular $bc $ and $ \lambda \in H_m \setminus (D_m^1
\cup D_m^2), $ (\ref{2.36}) implies
\begin{equation}
\label{2.36a} \|R_{bc}^0 (\lambda)VR_{bc}^0  (\lambda)F\|_\infty \leq
4C \left ( \frac{r(2m)g_m}{\rho^2} + \frac{4}{\rho} \sigma_1 +
\frac{4}{\rho} \sigma_2 + 16 \sigma_3 \right )
\end{equation}
where $\displaystyle \sigma_1 =
 \sum_{k\neq m}\frac{r(m+k)g_k}{|m-k|}, $
$$
\sigma_2 = \sum_{j\neq m}\frac{r(j+m)g_m}{|j-m|}, \quad \sigma_3 =
 \sum_{k\neq m} \sum_{j\neq m} \frac{r(j+k)g_k}{|m-j||m-k|}.
$$
We have
$$
 \sum_{k\neq m} \frac{r(m+k)}{|m-k|} =
 \sum_{|k-m|>|m|}\frac{r(m+k)}{|m-k|}+
 \sum_{|k-m| \leq |m|}\frac{r(m+k)}{|m-k|}.
$$
By the Cauchy inequality,
$$ \sum_{|k-m|>|m|}\frac{r(m+k)}{|m-k|}\leq
\|r\| \cdot \left ( \sum_{|k-m|>|m|} \frac{1}{(m-k)^2} \right )^{1/2}
\leq \frac{\|r\|}{\sqrt{|m|}}.
$$
On the other hand, if $|k-m| \leq |m| $ then $|k+m| \geq 2|m| -
|m-k|\geq |m|.  $ Therefore
$$\sum_{|k-m|\leq |m|}\frac{r(m+k)}{|m-k|}
\leq \left ( \sum_{|k-m|\leq |m|} r(m+k)|^2 \right )^{1/2} \left (
\sum_{k \neq m} \frac{1}{(m-k)^2} \right )^{1/2}
$$
$$\leq
\left ( \sum_{j \geq |m|} r(j)|^2 \right )^{1/2}
\cdot \frac{\pi}{\sqrt{3}}  \leq 2 \mathcal{E}_{|m|} (r).
$$
Since $|g_k| \leq \|g\|, $ the above inequalities imply that
\begin{equation}
\label{2.37}
\sigma_\alpha \leq  \|g\|\left (\frac{\|r\|}{\sqrt{|m|}} +
2\mathcal{E}_{|m|} (r) \right ), \quad \alpha=1,2.
\end{equation}
Next we estimate $\sigma_3 \leq \sigma_3^1 + \sigma_3^2  +\sigma_3^3,
$ where
$$
\sigma_3^1 =
\sum_{|m-k| > \frac{|m|}{2}} \sum_{j\neq m} \frac{r(j+k)g_k}{|m-j||m-k|},
\quad
\sigma_3^2 =
 \sum_{k\neq m} \sum_{|m-j| > \frac{|m|}{2}} \frac{r(j+k)g_k}{|m-j||m-k|},
$$
$$
\sigma_3^3  =
\sum_{|m-k| \leq \frac{|m|}{2}} \; \sum_{|m-j| \leq \frac{|m|}{2}} \frac{r(j+k)g_k}{|m-j||m-k|} \;.
$$

By the Cauchy inequality,
$$
(\sigma_3^1)^2 \leq  \sum_k |g_k|^2 \cdot  \sum_{j} |r(j+k)|^2
\cdot \sum_{j\neq m} \frac{1}{(j-m)^2} \; \cdot
\sum_{|m-k| > \frac{|m|}{2}} \frac{1}{(k-m)^2}
$$
$$
\leq \|r\|^2 \cdot \|g\|^2 \cdot \frac{\pi^2}{3} \cdot \frac{4}{|m|}
\leq  \frac{16}{|m|} \|r\|^2 \cdot \|g\|^2.
$$
The same argument gives the same estimate for $\sigma_3^2. $

On the other hand, if $|j-m| \leq |m|/2$ and
$|k-m| \leq |m|/2,$ then $|j+k| \geq 2|m| - |m-j| - |m-k| \geq |m|. $
Therefore, by the Cauchy inequality we obtain
$$
(\sigma^3_3)^2 \leq \
\sum_{|k-m| \leq \frac{|m|}{2}} |g_k|^2   \sum_{|j-m|
\leq \frac{|m|}{2}} |r(j+k)|^2  \sum_{j\neq m} \frac{1}{(j-m)^2}
\sum_{k\neq m} \frac{1}{(k-m)^2}
$$
$$
\leq \|g\|^2 (\mathcal{E}_{|m|} (r))^2 \frac{\pi^2}{3} \frac{\pi^2}{3} \leq
16 \|g\|^2 (\mathcal{E}_{|m|} (r))^2.
$$
From the above estimates it follows
\begin{equation}
\label{2.38}
\sigma_3 \leq
4\|g\|\left (\frac{2 \|r\|}{\sqrt{|m|}} +
\mathcal{E}_{|m|} (r) \right ).
\end{equation}

Now, in view of (\ref{2.36a}), the estimates (\ref{2.37}) and
(\ref{2.38}) imply (\ref{2.31}) in the case of strictly regular $bc.
$ In the case of regular but not strictly regular $bc $ the proof is
the same because for $ \lambda \in H_m \setminus D_m $ (\ref{2.35})
implies (\ref{2.36a}) with $\rho = 1/4.$

Finally, we prove (b). By  (\ref{1.15}) and (\ref{1.17}), it follows
that
$$
\| R_{bc}^0 (\lambda) V R_{bc}^0 (\lambda) \|_{L^2 \to L^\infty}
\leq C \|v\|\cdot [a(\lambda - \tau_1)  + a(\lambda - \tau_2)].
$$
If $T > 1+ 2|\tau_1 | + 2|\tau_2 |$  and $|Im \, \lambda | \geq T,$
then
$$
|Im \, (\lambda - \tau_\nu ) | \geq T - |\tau_\nu| \geq T/2, \quad
\nu = 1,2.
$$
Therefore, by Lemma \ref{lema1} we obtain, for $ |Im \, \lambda |\geq
T, $
$$ a(\lambda - \tau_1) + a(\lambda - \tau_2) \leq
\frac{2}{(T/2)^2} + \frac{16}{T} \leq \frac{24}{T}, $$ which implies
(\ref{2.32}).
\end{proof}

By (\ref{1.17}) we have
$$ \|VR_{bc}^0 (\lambda)VR_{bc}^0 (\lambda)\|_{L^2 \to L^2} \leq
\|v\| \cdot \|R_{bc}^0 (\lambda)VR_{bc}^0 (\lambda)\|_{L^2 \to
L^\infty}.
$$
Therefore, in view of (\ref{2.31}) and (\ref{2.32}), the following
holds.

\begin{Corollary}
\label{cor1} There are $N_0 \in 2\mathbb{N} $ and $ T_0 >0 $ such
that if $|m| \geq N $ and $\lambda \in H_m \setminus ( D^1_m \cup
D^2_m  ) $ in case of strictly regular $bc $ or $\lambda \in H_m
\setminus  D_m  $ in case of regular but not strictly regular $bc, $
or if $| Im \, \lambda | \geq T_0,$
\begin{equation}
\label{2.50}
\|VR_{bc}^0 (\lambda)VR_{bc}^0 (\lambda)\|_{L^2 \to L^2} \leq 1/2.
\end{equation}
\end{Corollary}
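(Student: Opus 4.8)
The plan is to combine the submultiplicative inequality
$$\|VR_{bc}^0 (\lambda)VR_{bc}^0 (\lambda)\|_{L^2 \to L^2} \leq \|v\| \cdot \|R_{bc}^0 (\lambda)VR_{bc}^0 (\lambda)\|_{L^2 \to L^\infty},$$
recorded just before the statement (a consequence of (\ref{1.17})), with the two estimates of Lemma \ref{loc1}. Because $\|v\|$ and the constant $C = C(bc)$ are fixed quantities, the whole matter reduces to pushing each resulting right-hand side below $1/2$ by an appropriate choice of the free thresholds $N_0$ and $T_0$.

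For the strip region I would invoke Lemma \ref{loc1}(a), which after multiplication by $\|v\|$ yields
$$\|VR_{bc}^0 (\lambda)VR_{bc}^0 (\lambda)\|_{L^2 \to L^2} \leq C\|v\| \left( \frac{\|v\|}{\sqrt{|m|}} + \mathcal{E}_{|m|}(r) \right)$$
for every $\lambda$ in the punctured strip $H_m \setminus (D^1_m \cup D^2_m)$ (respectively $H_m \setminus D_m$) with $m \neq 0$. The right-hand side tends to $0$ as $|m| \to \infty$: the first summand because $C$ and $\|v\|$ are constants, and the second because $r \in \ell^2(2\mathbb{Z})$ by (\ref{1.18a}), so its tail $\mathcal{E}_{|m|}(r) = (\sum_{|j| \geq |m|} |r(j)|^2)^{1/2}$ vanishes in the limit. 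Thus there is an even integer $N_0$ for which the bound is at most $1/2$ whenever $|m| \geq N_0$, giving (\ref{2.50}) on these strips.

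For the region far from the real axis I would use Lemma \ref{loc1}(b): taking $T_0 \geq 1 + 2|\tau_1| + 2|\tau_2|$ one gets
$$\|VR_{bc}^0 (\lambda)VR_{bc}^0 (\lambda)\|_{L^2 \to L^2} \leq \frac{C\|v\|^2}{T_0} \quad \text{for} \;\; |Im\,\lambda| \geq T_0,$$
and enlarging $T_0$ if necessary makes this at most $1/2$. Setting $N_0$ and $T_0$ to the larger of the values produced by the two arguments gives the thresholds claimed in the statement, depending only on $bc$ and $v$.

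There is no real obstacle here, since all the analytic labor is already packaged into Lemma \ref{loc1}. The only points worth stating carefully are that $C$ and $\|v\|$ do not depend on $m$, $\lambda$ or $T$, and that the two controlling quantities — the tail $\mathcal{E}_{|m|}(r)$ of a fixed $\ell^2$ sequence and the factor $1/T$ — are genuinely decaying; this is precisely what secures uniform smallness of $\|VR_{bc}^0 VR_{bc}^0\|$ and hence the convergence of the Neumann-type expansion (\ref{1.21}) on the contours used later.
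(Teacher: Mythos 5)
Your proposal is correct and follows essentially the same route as the paper: the text immediately preceding the Corollary combines the submultiplicative bound $\|VR_{bc}^0(\lambda)VR_{bc}^0(\lambda)\|_{L^2\to L^2}\leq \|v\|\cdot\|R_{bc}^0(\lambda)VR_{bc}^0(\lambda)\|_{L^2\to L^\infty}$ from (\ref{1.17}) with the estimates (\ref{2.31}) and (\ref{2.32}) of Lemma \ref{loc1}, exactly as you do, and the conclusion follows since $\mathcal{E}_{|m|}(r)\to 0$ (as $r\in\ell^2$) and $1/T\to 0$.
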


By (\ref{1.21}), the validity of (\ref{2.50}) for some $\lambda $
means that the resolvent $R_{bc} (\lambda) $ exists for that
$\lambda, $ which leads to the following localization of spectra
assertion.
\begin{Lemma}
\label{srl} (a) For strictly regular $bc$
 there is an $N_0 = N_0(v, bc) \in 2\mathbb{N}$ and $T_0=T_0 (v,bc)>0$ such that
if $N\geq N_0, \; N\in 2\mathbb{N}, $ and $T\geq T_0 $ then
\begin{equation}
\label{2.41} Sp\, (L_{bc} (v_{\zeta}) \subset R_{NT} \cup
\bigcup_{|m|>N} \left ( D^1_m \cup D^2_m \right ) \quad  \text{for}
\;\; v_{\zeta}= \zeta v, \; |\zeta| \leq 1.
\end{equation}
(b) For regular but not strictly regular $bc$
 there is an $N_0 = N_0(v, bc) \in 2\mathbb{N}$
  and $T_0=T_0(v,bc)>0$ such that
if $N\geq N_0, \; N\in 2\mathbb{N}, $ and $T\geq T_0 $ then
\begin{equation}
\label{2.41a} Sp\, (L_{bc} (v_\zeta) \subset R_{NT} \cup \bigcup_{|m|>N}
D_m \quad \text{for} \;\; v_\zeta = \zeta v, \;\; |\zeta| \leq 1.
\end{equation}
\end{Lemma}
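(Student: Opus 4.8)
The plan is to obtain $R_{bc}(\lambda)$ directly from the perturbation series (\ref{1.21}) and to invoke Corollary~\ref{cor1} on precisely the set from which we wish to exclude the spectrum. Writing $K_\lambda := VR_{bc}^0(\lambda)$, the expansion (\ref{1.21}) reads $R_{bc}(\lambda) = \sum_{m\ge 0} R_{bc}^0(\lambda) K_\lambda^m$, and it defines a bounded operator on $L^2(I,\mathbb{C}^2)$ --- so that $\lambda$ is a regular point --- as soon as this series converges in operator norm. The bound (\ref{2.50}), i.e.\ $\|K_\lambda^2\|_{L^2\to L^2}\le 1/2$, gives $\|K_\lambda^{2n}\|\le 2^{-n}$ and $\|K_\lambda^{2n+1}\|\le \|K_\lambda\|\,2^{-n}$; since $\|K_\lambda\|<\infty$ off the eigenvalues of $L^0_{bc}$ by (\ref{2.22}), the even and odd subseries both converge geometrically. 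Hence the entire assertion reduces to a purely geometric statement: the complement of $R_{NT}\cup\bigcup_{|m|>N}(D^1_m\cup D^2_m)$ is contained in the union of the two regions on which Corollary~\ref{cor1} guarantees (\ref{2.50}).

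I would carry out that covering as follows. Fix $N\ge N_0$, $T\ge T_0$, and take $\lambda$ outside $R_{NT}\cup\bigcup_{|m|>N}(D^1_m\cup D^2_m)$. If $|\mathrm{Im}\,\lambda|\ge T\ge T_0$, the second alternative of Corollary~\ref{cor1} applies at once. Otherwise $|\mathrm{Im}\,\lambda|<T$, and $\lambda\notin R_{NT}$ forces $|\mathrm{Re}\,\lambda-\mathrm{Re}\tfrac{\tau_1+\tau_2}{2}|\ge N+1$ by (\ref{2.26}). Because the strips $H_m$ of (\ref{2.24}) tile the plane with centres $\mathrm{Re}\tfrac{\tau_1+\tau_2}{2}+m$ and half-width $1$, one can then pick $m\in 2\mathbb{Z}$ with $\lambda\in H_m$ and $|m|\ge N+2>N\ge N_0$. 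Since also $\lambda\notin D^1_m\cup D^2_m$ (as $|m|>N$), the first alternative of Corollary~\ref{cor1} applies. In either case (\ref{2.50}) holds, so $\lambda$ is a regular point, which is exactly (\ref{2.41}). The regular-but-not-strictly-regular case is identical, using $H_m$ from (\ref{2.24a}), $R_{NT}$ from (\ref{2.26a}) and the single disks $D_m$ of (\ref{2.30a}) in place of $D^1_m\cup D^2_m$, and yields (\ref{2.41a}).

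It remains to establish uniformity in $\zeta$. Replacing $v$ by $v_\zeta=\zeta v$ replaces $V$ by $\zeta V$ and, since the matrix entries $w^{\eta\nu}$ of Lemma~\ref{lem10} depend linearly on $P,Q$, replaces the majorant $r$ by $|\zeta|\,r$ and $\|v\|$ by $|\zeta|\,\|v\|$. Consequently the right-hand sides of (\ref{2.31}) and (\ref{2.32}) are each multiplied by $|\zeta|\le 1$, so the norm controlling (\ref{2.50}) is multiplied by $|\zeta|^2\le 1$ and stays $\le 1/2$. Thus the single pair $N_0,T_0$ furnished by Corollary~\ref{cor1} for $v$ works simultaneously for every $v_\zeta$ with $|\zeta|\le 1$, giving the uniformity asserted in (\ref{2.41}) and (\ref{2.41a}).

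The only genuinely delicate point I anticipate is the strip/rectangle bookkeeping --- ensuring that no excised $\lambda$ lands in a strip $H_m$ with $|m|\le N$ whose disks were not removed. This is settled by the choice of half-width $N+1$ in (\ref{2.26}): for $|m|\le N$ one has $H_m\cap\{|\mathrm{Im}|<T\}\subset \overline{R_{NT}}$, and a point on the outer edge $|\mathrm{Re}\,\lambda-\mathrm{Re}\tfrac{\tau_1+\tau_2}{2}|=N+1$ lies on the shared boundary of $H_{\pm N}$ and $H_{\pm(N+2)}$, hence may be assigned to the strip with $|m|=N+2>N$. Everything beyond this is a direct combination of the already-established Corollary~\ref{cor1} with the convergence of (\ref{1.21}).
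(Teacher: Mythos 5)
Your proposal is correct and takes essentially the same route as the paper: the paper dispatches this lemma with the single observation that, by the series (\ref{1.21}), the validity of (\ref{2.50}) from Corollary \ref{cor1} at a point $\lambda$ (off $R_{NT}$ and the excised disks, or with $|\mathrm{Im}\,\lambda|\geq T_0$) makes $R_{bc}(\lambda)$ exist, uniformly for $v_\zeta=\zeta v$, $|\zeta|\leq 1$. Your write-up merely fills in the covering, geometric-convergence, and $\zeta$-uniformity details that the paper leaves implicit.
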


For strictly regular $bc,$ consider the Riesz projections associated
with $L=L_{bc}$
\begin{equation}
\label{2.61} S_N = \frac{1}{2\pi i} \int_{\partial R_{NT}} (\lambda -
L)^{-1} d\lambda, \quad P_{n,\alpha} = \frac{1}{2\pi i}
\int_{\partial D_n^\alpha} (\lambda - L)^{-1} d\lambda, \quad
\alpha=1,2,
\end{equation}
and let $S^0_N$ and $P_{n,\alpha}^0$ be the Riesz projections
associated with the free operator $L^0_{bc}.$ A standard argument
(continuity about the parameter $\zeta $ in (\ref{2.41}) shows that
\begin{equation}
\label{2.62}  \dim P_{n,\alpha}= \dim P_{n,\alpha}^0= 1, \quad \dim
S_N = \dim S_N^0 =2N+2.
\end{equation}

 If $bc$ are regular but not strictly
regular,   consider the Riesz projections associated with $L=L_{bc}$
\begin{equation}
\label{2.64} S_N = \frac{1}{2\pi i} \int_{\partial R_{NT}} (\lambda -
L)^{-1} d\lambda, \quad P_n = \frac{1}{2\pi i} \int_{\partial D_n}
(\lambda - L)^{-1} d\lambda,
\end{equation}
and let $S^0_N$ and $P_n^0$ be the Riesz projections associated with
the free operator $L^0_{bc}.$ The same argument, as in the case of
strictly regular $bc,$ shows that
\begin{equation}
\label{2.65}  \dim P_n= \dim P_n^0= 2, \quad \dim S_N = \dim
S_N^0=2N+2.
\end{equation}
Further analysis of the Riesz projections leads to the following
result -- see \cite[Theorems 15, 20]{DM23}.

\begin{Theorem}
\label{thm1} Suppose $v$ is an $L^2 $-Dirac potential.

(a) If $bc$ is strictly regular then there are $N_0=N_0(v,bc) \in
2\mathbb{N}$ and $T_0= T_0(v,bc)>0 $ such that if $N\geq N_0, \; N\in
2\mathbb{N}, $ and $T\geq T_0 $ then the Riesz projections $S_N,
\;S_N^0$ and $ P_{n,\alpha}, \, P_{n,\alpha}^0,$ $ \; n\in
2\mathbb{Z}, \;|n|>N, $ are well defined by (\ref{2.61}), and we have
\begin{equation}
\label{2.71} \sum_{|n|>N} \|P_{n,\alpha} - P_{n,\alpha}^0\|^2 <
\infty, \quad \alpha =1,2.
\end{equation}
Moreover,  the system $\{S_N; \;P_{n,\alpha}, \;n\in 2\mathbb{Z}, \;
|n|>N, \, \alpha = 1,2\}$ is a Riesz basis of projections in $L^2 (
[0,\pi] , \mathbb{C}^2),$  i.e.,
\begin{equation}
\label{2.72}  {\bf f} = S_N ({\bf f}) +  \sum_{\alpha=1}^2
\sum_{|n|>N} P_{n,\alpha} ({\bf f})
 \quad    \forall  {\bf f} \in L^2 ( [0,\pi] , \mathbb{C}^2),
\end{equation}
where the series converge unconditionally.

(b) If $bc $ is regular but not strictly regular, there are
$N_0=N_0(v,bc) \in 2\mathbb{N}$ and $T_0= T_0(v,bc)>0 $ such that if
$N\geq N_0, \; N\in 2\mathbb{N}, $ and $T\geq T_0 $ then the Riesz
projections $S_N, S_N^0 $ and $ \,P_n, \, P_n^0, \; n\in 2\mathbb{Z},
\; |n|>N, $  are well defined by (\ref{2.64}), and we have
\begin{equation}
\label{2.74}   \sum_{|n|>N} \|P_n - P_n^0\|^2 < \infty.
\end{equation}
Moreover,  the system $\{S_N; \;P_n, \; n\in 2\mathbb{Z}, \; |n|>N
\, \}$ is a Riesz basis of projections in $L^2 ( [0,\pi] ,
\mathbb{C}^2),$  i.e.,
\begin{equation}
\label{2.75}  {\bf f} = S_N ({\bf f}) + \sum_{|n|>N} P_n ({\bf f})
 \quad    \forall  {\bf f} \in L^2 ( [0,\pi] , \mathbb{C}^2),
\end{equation}
where the series converge unconditionally.
\end{Theorem}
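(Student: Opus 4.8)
The plan is to reduce the statement to an abstract Bari--Markus type criterion. Since the model system $\{\varphi^\nu_k\}$ is a Riesz basis (Theorem~\ref{thm01}), the free projections $\{S_N^0;P_{n,\alpha}^0\}$ already form a Riesz basis of projections, and the characterization (\ref{p3}) is available. It then suffices to establish two facts: (i) the quadratic closeness $\sum_{|n|>N}\|P_{n,\alpha}-P_{n,\alpha}^0\|^2<\infty$ together with the smallness of $S_N-S_N^0$, and (ii) completeness of the root system of $L_{bc}(v)$. The dimension identities (\ref{2.62}) and (\ref{2.65}) are already in hand from the deformation $v_\zeta=\zeta v$ and the uniform localization of Lemma~\ref{srl}: no spectrum crosses $\partial D_n^\alpha$ or $\partial R_{NT}$ for any $\zeta$, so each $P_{n,\alpha}$ has the same rank as $P_{n,\alpha}^0$ and $\dim S_N=\dim S_N^0=2N+2$.

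For (i) I would start from the resolvent expansion (\ref{1.21}) and the formulas (\ref{2.61}), writing $P_{n,\alpha}-P_{n,\alpha}^0=A_n+B_n$, where $A_n=\frac{1}{2\pi i}\int_{\partial D_n^\alpha}R^0_\lambda V R^0_\lambda\,d\lambda$ is the linear part and $B_n$ is the tail $\sum_{m\ge2}$. For $A_n$ the cleanest route is not the operator norm but the matrix representation in $\Phi$: by (\ref{1.10}) and (\ref{m2}) the integrand is diagonal-times-$w$, so a residue computation leaves only the entries in the row and column indexed by $(n,\alpha)$, of size $|w^{\alpha\nu}(n+k)|/|n-k+\tau_\alpha-\tau_\nu|$. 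Summing their squares and then over $n$, the substitution $s=n-k$ turns the double sum into $\sum_{s\neq 0}s^{-2}\sum_n|w^{\alpha\nu}(2n-s)|^2\le\|w^{\alpha\nu}\|^2\sum_{s\neq0}s^{-2}$, finite by (\ref{m3}); hence $\sum_n\|A_n\|_{HS}^2\le C\|v\|^2$. For the nonlinear tail I would factor out a block $VR^0_\lambda VR^0_\lambda$, whose $L^2\to L^2$ norm is $\le1/2$ on $\partial D_n^\alpha$ by Corollary~\ref{cor1}, so that $B_n=\frac{1}{2\pi i}\int_{\partial D_n^\alpha}R^0_\lambda V R^0_\lambda\,W_\lambda\,d\lambda$ with $\sup_{\partial D_n^\alpha}\|W_\lambda\|_{L^2\to L^2}\le C$. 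Since $W_\lambda$ carries an extra factor of $V$, i.e. another copy of $w$, the same residue-and-convolution bookkeeping yields $\sum_n\|B_n\|^2<\infty$, proving (\ref{2.71}) (and (\ref{2.74}) in the non-strictly-regular case, with $D_n$ of rank two in place of $D_n^1,D_n^2$).

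The central projection is handled on $\partial R_{NT}$: on its vertical sides one uses (\ref{2.31}) and on the horizontal sides $|\mathrm{Im}\,\lambda|\ge T$ the estimate (\ref{2.32}), so that $\|S_N-S_N^0\|\to 0$ and $S_N-S_N^0$ is a small perturbation. Completeness of the root system of $L_{bc}(v)$ for regular $bc$ is classical and may be quoted from \cite{MO}. With completeness, the dimension matching, and the quadratic closeness in hand, the abstract Bari--Markus theorem (see \cite{GK}) upgrades the complete system $\{S_N;P_{n,\alpha}\}$ to a Riesz basis of projections, giving the unconditional expansions (\ref{2.72}) and (\ref{2.75}). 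The regular-but-not-strictly-regular case is identical: the two simple discs $D_n^1,D_n^2$ merge into one disc $D_n$ carrying a rank-two projection, and one invokes part (b) of Lemma~\ref{loc1} and Corollary~\ref{cor1} verbatim.

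The main obstacle is precisely the square-summability in (\ref{2.71}) and (\ref{2.74}). The operator-norm bounds of Lemma~\ref{loc1}, namely $\|R^0_\lambda V R^0_\lambda\|_{L^2\to L^\infty}\le C(\|v\|/\sqrt{|n|}+\mathcal{E}_{|n|}(r))$, are fully adequate for localization (Lemma~\ref{srl}) and for the convergence of (\ref{1.21}), but they are \emph{not} square-summable in $n$ because of the $\|v\|/\sqrt{|n|}$ term; a bound of that size would only give $\|P_{n,\alpha}-P_{n,\alpha}^0\|\to 0$, not $\ell^2$-summability. The point is therefore to keep the factor $w^{\alpha\nu}(n+k)$ attached throughout the residue computation rather than splitting off a crude $\|r\|/\sqrt{|n|}$: only then does summation over $n$ produce the convergent convolution $\sum_{s\neq0}s^{-2}\sum_n|w(2n-s)|^2$, finite thanks solely to $w\in\ell^2$. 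Carrying this same care through the nonlinear tail $B_n$, where $W_\lambda$ is $\lambda$-dependent and non-diagonal, is the most technical part of the argument.
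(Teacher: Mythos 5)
First, a point of reference: the paper does not actually prove this theorem --- it is imported verbatim from the companion paper (``see \cite[Theorems 15, 20]{DM23}'', the sentence just before the statement). So your proposal must stand on its own, and its architecture --- dimension count via the deformation $v_\zeta=\zeta v$ and Lemma~\ref{srl}, completeness of the root system from \cite{MO}, quadratic closeness of projections, and then the Bari--Markus theorem on bases of subspaces from \cite{GK} --- is indeed the strategy of the cited source. Your treatment of the linear part $A_n$ is correct: the residue computation does confine the nonzero matrix entries of $A_n$ to the row and column indexed by $(n,\alpha)$, with entries $w^{\alpha\nu}(n+k)/(n-k+\tau_\alpha-\tau_\nu)$, and the substitution $s=n-k$ gives $\sum_n\|A_n\|_{HS}^2\le C\|w\|^2\sum_{s\neq 0}s^{-2}<\infty$.

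The genuine gap is in $B_n$. Your factor $W_\lambda=\sum_{m\ge 1}(VR^0_\lambda)^m$ is \emph{not} analytic inside $D_n^\alpha$: every term contains $R^0_\lambda$, which has a pole at $n+\tau_\alpha$, so the integral of $R^0_\lambda VR^0_\lambda W_\lambda$ cannot be evaluated by taking residues of the explicit factor alone, and expanding $W_\lambda$ term by term produces poles of order up to $m+1$ whose residues involve derivatives --- a different and much heavier combinatorial computation, not ``the same bookkeeping''. If instead you estimate the integral by norms, the only bound available (Lemma~\ref{loc1}) is $\|R^0_\lambda VR^0_\lambda\|\le C\bigl(\|v\|/\sqrt{|n|}+\mathcal{E}_{|n|}(r)\bigr)$, which yields only $\|B_n\|=O(|n|^{-1/2})$ --- exactly the non-square-summable obstruction you name in your last paragraph but never remove. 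The repair is to keep the $w$-structure through a \emph{Hilbert--Schmidt} bound on the whole matrix, not just its $(n,\alpha)$ row and column: on $\partial D_n^\alpha$ one has
\begin{equation*}
\|R^0_\lambda VR^0_\lambda\|_{HS}^2\le C\sum_{j,k}\frac{|w(j+k)|^2}{\max(\rho,|n-j|)^2\,\max(\rho,|n-k|)^2},
\end{equation*}
and summing over $n$ \emph{first}, using $\sum_n \max(\rho,|n-j|)^{-2}\max(\rho,|n-k|)^{-2}\le C/\max(1,|j-k|)^2$ and then the convolution trick in the variables $j\pm k$, gives $\sum_n\sup_{\partial D_n^\alpha}\|R^0_\lambda VR^0_\lambda\|_{HS}^2\le C\|w\|^2<\infty$. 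Now factor the full difference as $R_\lambda-R^0_\lambda=R^0_\lambda VR^0_\lambda\,(I-VR^0_\lambda)^{-1}$, where the Neumann series for the last factor converges and is uniformly bounded on the contours by Corollary~\ref{cor1} together with $\|VR^0_\lambda\|\le C\|v\|$ there. This gives $\|P_{n,\alpha}-P_{n,\alpha}^0\|\le C\rho\,\sup_{\partial D_n^\alpha}\|R^0_\lambda VR^0_\lambda\|_{HS}$, handles $A_n$ and $B_n$ at once (the residue computation becomes unnecessary), and proves (\ref{2.71}); the identical estimate on $\partial D_n$ gives (\ref{2.74}). Two smaller remarks: for the Bari--Markus step you only need $S_N-S_N^0$ bounded, not small, since a single finite-rank term never affects square-summability; and your appeal to \cite{MO} for completeness is legitimate, since the ranges of $\{S_N;P_{n,\alpha}\}$ exhaust all root subspaces by the localization (\ref{2.41}).
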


Since the projections $P_{n,\alpha} $ in (\ref{2.72}) are
one-dimensional, we obtain the following.
\begin{Corollary}
\label{cor2} If $bc$ is strictly regular, then there exists a Riesz
basis in $L^2([0,\pi], \mathbb{C}^2)$ consisting of eigenfunctions
and at most finitely many associated functions of the Dirac operator
$L_{bc}(v).$
\end{Corollary}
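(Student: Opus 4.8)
The plan is to turn the Riesz basis of projections furnished by Theorem~\ref{thm1}(a) into a genuine Riesz basis of vectors by choosing, inside the range of each projection, an appropriate basis of root functions of $L=L_{bc}(v)$. First I would record the two facts I rely on: by (\ref{2.62}) each $P_{n,\alpha}$ with $|n|>N$ has rank one, while $S_N$ has the fixed finite rank $2N+2$; and by (\ref{2.72}) the system $\{S_N;\,P_{n,\alpha}\}$ reproduces every $\mathbf f$ with unconditional convergence, i.e.\ it is a Riesz basis of projections, so that the two-sided bound
$$
c\,\|\mathbf f\|^2 \le \|S_N\mathbf f\|^2 + \sum_{\alpha=1}^2\sum_{|n|>N}\|P_{n,\alpha}\mathbf f\|^2 \le C\,\|\mathbf f\|^2
$$
holds with $c,C>0$ independent of $\mathbf f$.

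Next I would identify the ranges with root subspaces of $L$. Since $S_N$ and the $P_{n,\alpha}$ commute with $L$, their ranges are $L$-invariant. For $|n|>N$ the disk $D_n^\alpha$ (see (\ref{2.30})) contains exactly one point of $Sp(L)$, and $\dim\operatorname{Ran}P_{n,\alpha}=1$ forces that eigenvalue to be algebraically simple; hence $\operatorname{Ran}P_{n,\alpha}$ is the one-dimensional eigenspace, spanned by a single eigenfunction which I normalize to $u_{n,\alpha}$, $\|u_{n,\alpha}\|=1$. The range of $S_N$ is a $(2N+2)$-dimensional $L$-invariant subspace; the restriction $L|_{\operatorname{Ran}S_N}$ is a finite matrix and therefore admits a Jordan basis $g_1,\dots,g_{2N+2}$ consisting of eigenfunctions and associated functions. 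This already explains the phrase ``at most finitely many associated functions'': the rank-one blocks contribute only genuine eigenfunctions, so the only associated functions occur inside the single fixed finite-dimensional block $\operatorname{Ran}S_N$, and there are at most $2N+2$ of them.

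Finally I would assemble the family $\{g_1,\dots,g_{2N+2}\}\cup\{u_{n,\alpha}:|n|>N,\ \alpha=1,2\}$ and verify the Riesz-basis characterization (\ref{p3}). Completeness is immediate from (\ref{2.72}). For a rank-one block the normalized generator satisfies $P_{n,\alpha}\mathbf f = c_{n,\alpha}(\mathbf f)\,u_{n,\alpha}$ with $|c_{n,\alpha}(\mathbf f)| = \|P_{n,\alpha}\mathbf f\|$, so the expansion coefficients of $\mathbf f$ in the assembled system coincide, outside the fixed finite block, with the projection norms in the displayed inequality; inside $\operatorname{Ran}S_N$ the coefficients of $S_N\mathbf f$ in the fixed basis $g_1,\dots,g_{2N+2}$ are comparable to $\|S_N\mathbf f\|$ with constants depending only on that basis. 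Combining these comparisons with the displayed two-sided bound yields inequalities of the form (\ref{p3}) for the full coefficient sequence, which is precisely the Riesz-basis property.

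I expect the only real obstacle to be the uniformity required for the lower Riesz bound. In general, refining a Riesz basis of projections into a Riesz basis of vectors demands that the chosen bases of the individual ranges be Riesz bases of those ranges with constants bounded away from $0$ and $\infty$ \emph{uniformly} in the block index. Here this is automatic: all blocks but one are one-dimensional, where a normalized generator is trivially a Riesz basis of its range with constant $1$; and the single exceptional block $\operatorname{Ran}S_N$ has fixed finite dimension, contributing only a bounded distortion independent of the expansion. Hence the uniform lower bound holds, and the assembled system is a Riesz basis of eigenfunctions and finitely many associated functions, as claimed.
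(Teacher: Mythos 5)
Your proposal is correct and follows essentially the same route as the paper, which deduces the corollary from Theorem~\ref{thm1}(a) precisely by observing that the projections $P_{n,\alpha}$ in (\ref{2.72}) are one-dimensional (hence spanned by eigenfunctions) while all associated functions are confined to the single finite-dimensional block $\operatorname{Ran}S_N$. The paper leaves this as a one-line remark; you have simply supplied the details (Jordan basis inside $\operatorname{Ran}S_N$, uniformity of the Riesz constants) that the authors regarded as standard.
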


\section{Equiconvergence}

By Theorem~\ref{thm01}, for every regular $bc$ there is a Riesz basis
$\Phi= \{\varphi_k^\nu\}$ in $L^2([0,\pi], \mathbb{C}^2)$ consisting
of root functions of the free Dirac operator $L^0_{bc}.$ In the next
section, we study the point-wise convergence of the $L^2$-expansions
with respect to the basis $\Phi = \Phi (bc) $
\begin{equation}
\label{ec1}  \sum_{k \in 2 \mathbb{Z}}\sum_{\nu =1}^2 \langle F,
\tilde{\varphi}_k^\nu \rangle \varphi_k^\nu.
\end{equation}

By Corollary \ref{cor2}, for strictly regular $bc$  there is a Riesz
basis $\Psi= \{\psi_k^\nu\}$ in $L^2([0,\pi], \mathbb{C}^2)$
consisting of root functions of the Dirac operator $L_{bc}(v).$ The
point-wise convergence of the corresponding  $L^2$-expansions
\begin{equation}
\label{ec2} F=  \sum_{k\in \mathbb{Z}} \sum_{\nu =1}^2 \langle F,
\tilde{\psi}_k^\nu \rangle  \psi_k^\nu.
\end{equation}
is closely related to the point-wise convergence in (\ref{ec1})
because (under some assumptions on $F$ or $v$)
\begin{equation}
\label{ec3} \left \|  \sum_{|k|\leq N} \sum_{\nu =1}^2 \langle F,
\tilde{\psi}_k^\nu \rangle  \psi_k^\nu - \sum_{|k|\leq N} \sum_{\nu
=1}^2 \langle F, \tilde{\varphi}_k^\nu \rangle \varphi_k^\nu \right
\|_\infty \to 0 \quad \text{as} \; N\to \infty.
\end{equation}
Further we refer to (\ref{ec3}) as {\em equiconvergence} of
spectral decompositions (\ref{ec1}) and (\ref{ec2}).

Let $R_{NT} $ be the rectangle defined in (\ref{2.26}) and let $S_N $
and $S_N^0 $ be the corresponding projections defined by
(\ref{2.61}). Then  (\ref{ec3}) can be written as
\begin{equation}
\label{ec4}
 \left \| \left ( S_N - S^0_N \right ) F \right
\|_{\infty} \to 0 \quad \text{as} \quad N \to \infty.
\end{equation}
This form of the equiconvergence statement is suitable for regular
but not strictly regular $bc $ as well.  Of course, $S_N = S_N (v,
bc)$ but for the sake of simplicity the dependence on $v$ and  $bc$
is suppressed in notations. Further, we also write $R_\lambda $
instead of $R_{bc} (\lambda).$

Since
$$
R_\lambda -R^0_\lambda = R^0_\lambda VR^0_\lambda + \sum_{m=2}^\infty
R^0_\lambda (VR^0_\lambda)^m,
$$
we have
\begin{equation}
\label{ec20}
 S_N -S_N^0    = \frac{1}{2 \pi i} \int_{\partial R_{NT}}
(R_\lambda -R^0_\lambda) d\lambda = A_N  + B_N,
\end{equation}
where
\begin{equation}
\label{ec21}
A_N =
\frac{1}{2 \pi i} \int_{\partial R_{NT}}
R^0_\lambda VR^0_\lambda d\lambda
\end{equation}
\begin{equation}
\label{ec22}
B_N  = \frac{1}{2 \pi i} \int_{\partial R_{NT}}
\sum_{m=2}^\infty
R^0_\lambda (VR^0_\lambda)^m
d\lambda.
\end{equation}
Next we show that all restrictions on the class of functions for
which (\ref{ec4}) holds come from analysis of the operators $A_N. $

\begin{Proposition}
\label{propBN}  For every regular $bc,$  $\; L^2$-potential $v$ and
 $  F \in L^2 ([0,\pi], \mathbb{C}^2),$
\begin{equation}
\label{ec24}
\|B_N (F)\|_\infty \to 0 \quad \text{as} \;\;  N \to \infty.
\end{equation}

\end{Proposition}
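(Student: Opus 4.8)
The plan is to bound the operator norm $\|B_N\|_{L^2 \to L^\infty}$ directly and show it tends to zero, which of course gives \eqref{ec24} for every fixed $F$. The key tool is the resolvent estimate from Lemma~\ref{loc1}: on the contour $\partial R_{NT}$ the quantity $\|VR^0_\lambda\|_{L^2 \to L^2}$ is small. Specifically, the tail $\sum_{m=2}^\infty R^0_\lambda (VR^0_\lambda)^m$ can be written as $R^0_\lambda\, (VR^0_\lambda)\,\sum_{m=0}^\infty (VR^0_\lambda)^{m+1} = \left(R^0_\lambda V R^0_\lambda\right)(VR^0_\lambda)\bigl(I - VR^0_\lambda\bigr)^{-1}$, so that, using submultiplicativity of norms through the chain $L^2 \to L^\infty$ and $L^\infty \to L^2$,
\begin{equation}
\label{bnplan1}
\left\| \sum_{m=2}^\infty R^0_\lambda (VR^0_\lambda)^m \right\|_{L^2 \to L^\infty}
\leq \left\|R^0_\lambda V R^0_\lambda\right\|_{L^2 \to L^\infty}\,
\frac{\|VR^0_\lambda\|_{L^2 \to L^2}}{1 - \|VR^0_\lambda\|_{L^2 \to L^2}}.
\end{equation}

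First I would fix $N \geq N_0$ and $T \geq T_0$ as in Corollary~\ref{cor1} and Lemma~\ref{srl}, so that $\|VR^0_\lambda\|_{L^2 \to L^2} \leq 1/\sqrt{2}$ (or any bound strictly below $1$) for all $\lambda \in \partial R_{NT}$; this makes the Neumann series converge and keeps the last factor in \eqref{bnplan1} bounded by an absolute constant. Then the integrand in $B_N$ is controlled by $C\,\|R^0_\lambda V R^0_\lambda\|_{L^2 \to L^\infty}$, and I would invoke Lemma~\ref{loc1}(a),(b) to estimate this on the two portions of the contour separately. On the horizontal sides $|\mathrm{Im}\,\lambda| = T$ the bound is $C\|v\|/T$; on the vertical sides, which lie in strips $H_m$ with $|m| \sim N$, the bound is $C\bigl(\|v\|/\sqrt{|m|} + \mathcal{E}_{|m|}(r)\bigr)$, both of which are small for large $N$ and $T$.

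The main step is to combine these pointwise bounds on the integrand with the length of the contour $\partial R_{NT}$. The horizontal sides have length $\sim 2N$, and the integrand there is $O(\|v\|/T)$; since $T$ can be taken large independently of $N$, I would instead exploit decay in $T$ by integrating over all of $|\mathrm{Im}\,\lambda| \geq$ something, or more cleanly, observe that on the vertical sides the length in $t$ is $2T$ but the resolvent factors contribute the convergent $t$-integral $\int \frac{dt}{\xi^2 + t^2}$ coming from $a(\lambda)$ in Lemma~\ref{lema1}, so the vertical contribution is genuinely $O\bigl(\|v\|/\sqrt{N} + \mathcal{E}_N(r)\bigr)$ after integration, and $\mathcal{E}_N(r) \to 0$ since $r \in \ell^2$ by \eqref{1.18a}. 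The horizontal pieces, parametrized by $x$ over an interval of length $\sim 2N$ at height $T$, require that the $x$-integral of $\|R^0_\lambda V R^0_\lambda\|_{L^2\to L^\infty}$ at fixed large $|\mathrm{Im}\,\lambda|$ stay bounded; here I would use the sharper form of Lemma~\ref{loc1}(b) together with the summability of $a(\lambda - \tau_\nu)$ in the real direction to get a bound $O(1/T)$, and then let $T \to \infty$.

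The hard part will be organizing the contour integration so that the factor of the contour length does not destroy the smallness: the naive product of integrand-bound and length is not obviously small, so the argument must use that the resolvent kernel decays in both the real and imaginary directions in an integrable way (the $\ell^2$-summability encoded in $a(\lambda)$ and in $\mathcal{E}_{|m|}(r)$), rather than just a uniform sup bound. Concretely, I expect the estimate to factor through the square-summability $r \in \ell^2(2\mathbb{Z})$ so that the vertical-side contribution is dominated by $C\,\mathcal{E}_N(r) + C\|v\|/\sqrt{N}$, which vanishes as $N \to \infty$, and the horizontal-side contribution by $C\|v\|/T$, which one removes by first fixing $N$ and then sending $T \to \infty$ (the left side of \eqref{ec24} being independent of $T$ by Cauchy's theorem, since no spectrum lies on $\partial R_{NT}$). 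Because this yields a bound on $\|B_N\|_{L^2 \to L^\infty}$ that is independent of $F$, the convergence \eqref{ec24} holds uniformly on the unit ball and in particular for each individual $F$.
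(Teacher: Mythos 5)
Your plan differs genuinely from the paper's. The paper never shows $\|B_N\|_{L^2\to L^\infty}\to 0$; it only proves a uniform bound $\|B_N\|_{L^2\to L^\infty}\le K$ (taking $T=N$, so the horizontal sides contribute $O(N\cdot N^{-3/2})$ and the vertical sides contribute $O\bigl(\int_{\mathbb{R}}[h(t)]^{3/2}dt\bigr)=O(1)$), and then verifies $\|B_N G\|_\infty\to 0$ on the dense set of finite linear combinations of the $\varphi_k^\nu$, where the extra factor $\|VR^0_\lambda\varphi_k^\nu\|\le C\|v\|/|\lambda-k-\tau_\nu|\lesssim (N+|t|)^{-1}$ supplies exactly the decay along the vertical sides that operator-norm estimates lack; a standard uniform-boundedness-plus-density argument finishes. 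You instead try to prove norm convergence directly, which, if completed, is a stronger statement. But as written your argument has two gaps, one minor and one at the crucial step.

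The minor one: Corollary \ref{cor1} bounds $\|VR^0_\lambda VR^0_\lambda\|_{L^2\to L^2}\le 1/2$, not $\|VR^0_\lambda\|_{L^2\to L^2}$; the implication $\|A^2\|\le 1/2\Rightarrow\|A\|\le 1/\sqrt2$ is false, and in fact $\|VR^0_\lambda\|$ is only $O(\|v\|)$ on the contour (see (\ref{ec34})), which can exceed $1$, so your denominator $1-\|VR^0_\lambda\|$ is unusable. This is fixable by grouping the Neumann series in pairs, as in (\ref{ec35}), at the cost of a harmless factor $1+C_1\|v\|$. The serious gap is the vertical-side estimate, which you assert rather than prove: Lemma \ref{loc1}(a) gives smallness $C\varepsilon_N$, $\varepsilon_N:=\|v\|/\sqrt{N}+\mathcal{E}_N(r)$, but \emph{uniformly in} $t$, with no decay, so multiplied by the side length $2T\ge 2N$ it gives $O(N\varepsilon_N)$, which diverges; while the $t$-decaying bound $\lesssim \|v\|^2[h(t)]^{3/2}$ from (\ref{1.15})--(\ref{1.17}) and Lemma \ref{lema1} integrates only to a constant, not to $o(1)$. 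Neither bound alone, nor their mere coexistence, yields your claimed $O(\|v\|/\sqrt N+\mathcal{E}_N(r))$ for the integrated vertical contribution; you need an explicit combination, e.g. split the $t$-integral at a height $T_1$, use the $\varepsilon_N$ bound for $|t|\le T_1$ and the $[h(t)]^{3/2}$ bound for $|t|>T_1$, getting $\lesssim T_1\varepsilon_N + T_1^{-1/2}$, and optimize $T_1\sim\varepsilon_N^{-2/3}$ to obtain $O(\varepsilon_N^{1/3})$. That does tend to zero (so your overall strategy can be repaired and even yields norm convergence, hence convergence uniform on the unit ball), but the rate you claim is not what such an argument produces, and the proposal contains no argument at all for this combination step --- which is precisely the "hard part" you flag and then leave as an expectation. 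Without it the proof is incomplete.
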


\begin{proof}
To prove (\ref{ec24}) it is enough to show that

(i)   there is a constant $K>0 $ such that
\begin{equation}
\label{ec25}
   \|B_N \|_{L^2 \to L^\infty}  \leq K \quad \forall \, N\geq N_0;
\end{equation}

(ii)  $\left \|B_N (G)  \right \|_{\infty} \to 0 $ as $N\to \infty$
for functions $G$ in a dense subset of $L^2 ([0,\pi], \mathbb{C}).$\\
Then a standard argument shows that (i) and  (ii) imply (\ref{ec24}).

First we prove (i). The integral in (\ref{ec22}) does not depend on
the choice of $T>T_0 $ because the integrand depends analytically on
$\lambda $ if $|Im \, \lambda | > T_0. $ Therefore, for every $T>T_0,
$
\begin{equation}
\label{ec26}
\|B_N \|_{L^2 \to L^\infty}
\leq \frac{1}{2\pi} \int_{\partial R_{NT}}
\left \|\sum_{m=2}^\infty R^0_\lambda (VR^0_\lambda)^m
\right \|_{L^2 \to L^\infty} d|\lambda|.
\end{equation}

In view of (\ref{1.15}), to estimate the latter integral we need to
find estimates from above of $a(\lambda -\tau_1) + a(\lambda
-\tau_2)$ for $\lambda \in
\partial R_{NT}, $  where $a(\lambda) $ is the function defined in
Lemma~\ref{lema1}.

The boundary $\partial R_{NT} $ consist of four segments $\Delta^-_1,
\Delta_1^+, \Delta^-_2, \Delta_2^+,$
where \\
$ \Delta^\pm_1 = \{\lambda= x+it:  -N-1+ Re \frac{\tau_1 + \tau_2}{2}
\leq x \leq N+1 +Re \frac{\tau_1 + \tau_2}{2}, \; t= \pm T \},
$  \\
$
\Delta^\pm_2 = \{\lambda = x+it: \; x =\pm (N+1) + Re \frac{\tau_1 + \tau_2}{2},
\; -T \leq t \leq T  \}.
$\\
If $\lambda \in \Delta^\pm_1, $ then $ |Im \, \lambda| =T, $ so for
$T>T_0 \geq 1+2|\tau_1| +2|\tau_2|$
$$
|Im \, (\lambda - \tau_\nu ) | \geq T - |\tau_\nu| \geq T/2, \quad
\nu = 1,2.
$$
Then, by Lemma \ref{lema1},
\begin{equation}
\label{ec31} a(\lambda -\tau_1) + a(\lambda -\tau_2) \leq
\frac{2}{(T/2)^2} + \frac{16}{T} \leq \frac{24}{T}, \quad \lambda \in
\Delta^\pm_1.
\end{equation}

If $\lambda \in \Delta^\pm_2, $ then $ \lambda - \tau_\alpha = \pm (N
+1) + (-1)^\alpha Re \, \frac{\tau_1 - \tau_2}{2} + i (t- Im \,
\tau_\alpha),$ $ \alpha =1,2. $ By (\ref{14}), it follows that
$$
\lambda - \tau_\alpha = m_\alpha +\xi_\alpha +
 i (t- Im \, \tau_\alpha),   \quad m_\alpha \in 2\mathbb{Z},
\;\;  1/2 \leq  |\xi_\alpha | \leq 1, \quad \; \alpha =1,2.
$$
Therefore, by Lemma~\ref{lema1} we obtain
\begin{equation}
\label{ec32} a(\lambda -\tau_1) + a(\lambda -\tau_2) \leq h (t),
\quad t = Im \, \lambda, \quad \lambda \in \Delta^\pm_2,
\end{equation}
where
\begin{equation}
\label{ec33}  h (t) := \sum_{\alpha =1}^2 \left ( \frac{1}{1/2+|t- Im
\, \tau_\alpha |^2} + \frac{8}{1+2|t- Im \, \tau_\alpha |} \right ).
\end{equation}

The integrand of the integral in (\ref{ec26}) does not exceed
$$
\left \|\sum_{m=2}^\infty R^0_\lambda (VR^0_\lambda)^m \right \|_{L^2
\to L^\infty} \leq \|( R^0_\lambda VR^0_\lambda VR^0_\lambda\|_{L^2
\to L^\infty} \sum_{m=0}^\infty  \|(VR^0_\lambda)^m \|_{L^2 \to L^2}.
$$
By (\ref{2.22}), $\|VR^0_\lambda\|\leq \|v\| \cdot [a(\lambda
-\tau_1) + a(\lambda -\tau_2)]^{1/2},$  so (\ref{ec31}) and
(\ref{ec32}) yield
$$
\|VR^0_\lambda\|\leq \begin{cases} 5\|v\|/\sqrt{T}   & \text{if} \;\;
\lambda \in \Delta^\pm_1, \\ \\\|v\| \sqrt {h(t)} & \text{if} \;\;
\lambda \in \Delta^\pm_2, \; t = Im \, \lambda.
\end{cases}
$$
By (\ref{ec33}), $h(t) \to 0 $ as $|t|\to \infty. $ Therefore, there
is a constant $C_1= C_1 (bc) > 0 $ such that
\begin{equation}
\label{ec34} \sup \{ \|VR^0_\lambda\|_{L^2 \to L^2} : \; \lambda \in
\partial R_{NT} \} \leq C_1 \|v\|.
\end{equation}
In view of Corollary \ref{cor1} -- see (\ref{2.50}) -- if $T>T_0, $
$N>N_0$ and $\lambda \in \partial R_{NT} $ then $\|VR^0_\lambda
VR^0_\lambda\|_{L^2 \to L^2} \leq 1/2,$ so
\begin{equation}
\label{ec35} \sum_{m=0}^\infty  \|VR^0_\lambda)^m \|_{L^2 \to
L^2}\leq \sum_{s=1}^\infty (1+C_1 \|v||)  \|(VR^0_\lambda
VR^0_\lambda)^s \|_{L^2 \to L^2} \leq 1+C_1 \|v\|.
\end{equation}

On the other hand, (\ref{1.15}) and (\ref{1.17}) imply that
\begin{equation}
\label{ec38} \|( R^0_\lambda VR^0_\lambda VR^0_\lambda\|_{L^2 \to
L^\infty} \leq C \|v\|^2 [a(\lambda -\tau_1) + a(\lambda
-\tau_2)]^{3/2}.
\end{equation}
Therefore, in view of (\ref{ec31}), (\ref{ec32}) and (\ref{ec35}), it
follows that
$$
\left \|\sum_{m=2}^\infty R^0_\lambda (VR^0_\lambda)^m
\right \|_{L^2 \to L^\infty}
\leq \begin{cases}
C_2 (24/T)^{3/2}  & \text{if} \;\; \lambda \in \Delta^\pm_1, \\
C_2 [h (t)]^{3/2} & \text{if} \;\; \lambda \in \Delta^\pm_2, \; t=Im
\, \lambda
\end{cases}
$$
with $C_2 =C(1+C_1 \|v\|) \|v\|^2.$ Now, choosing $T=N$ and taking
into account that $\int_{-\infty}^\infty [h (t)]^{3/2} dt < \infty$
(see (\ref{ec33})), we obtain that the integrals in (\ref{ec26}) are
uniformly bounded. This completes the proof of (i). \vspace{2mm}

Next we prove (ii). Fix a regular $bc,$  and let $\Phi =
\{\varphi_k^\nu \} $ be the corresponding Riesz basis of
eigenfunctions of the free operator $L_{bc}^0 $ given in
Theorem~\ref{thm01}. Since the linear combinations of $\varphi_k^\nu
$ are dense in $L^2 ([0,\pi],\mathbb{C}^2),$  it is enough to prove
(ii) for $G=\varphi_k^\nu. $

Fix $k \in 2\mathbb{Z}$ and $\nu \in \{1,2\}.$ By (\ref{ec22}), we
have for every $T>T_0 $
\begin{equation}
\label{ec40} \| B_N \varphi_k^\nu \|_\infty \leq \frac{1}{2 \pi }
\int_{\partial R_{NT}} \left \|\sum_{m=2}^\infty R^0_\lambda
(VR^0_\lambda)^m \varphi_k^\nu \right \|_\infty d|\lambda|.
\end{equation}
The integrand does not exceed
$$
\left \|\sum_{m=2}^\infty
R^0_\lambda (VR^0_\lambda)^m \varphi_k^\nu \right \|_\infty
\leq \|R^0_\lambda VR^0_\lambda \|_{L^2 \to L^\infty}
 \sum_{m=0}^\infty \|
(VR^0_\lambda)^m  \|_{L^2 \to L^2} \|V R^0_\lambda \varphi_k^\nu  \|.
$$

By (\ref{1.10}), (\ref{1.17})  and  (\ref{ec8}),
$$\|V R^0_\lambda \varphi_k^\nu\| \leq  \|v\|
\cdot \| R^0_\lambda \varphi_k^\nu \|_\infty \leq
\frac{C\|v\|}{|\lambda - k- \tau_\nu |}.
$$
If $ \lambda \in \Delta^\pm_1, $ then $|Im \, \lambda|= T $ so for
large enough $T$  it follows $|\lambda - k- \tau_\nu | \geq T/2.$ If
$ \lambda \in \Delta^\pm_2, $ then $ \lambda =\pm (N+1) + Re
\frac{\tau_1 + \tau_2}{2} + it, $ so for large enough $N$ we obtain
$$  |\lambda - k- \tau_\nu | \geq \frac{1}{\sqrt{2}} \left (
N+1 - |k| - \left | Re \frac{\tau_1 - \tau_2}{2} \right | \right ) +
\frac{1}{\sqrt{2}} | t- Im \, \tau_\nu | \geq \frac{N+|t|}{2}. $$
Therefore, for large enough $N,$
$$\|V R^0_\lambda \varphi_k^\nu \| \leq
\begin{cases}
\frac{2C \|v\|}{T}        \quad  \text{for} \;\;  \lambda \in \Delta^\pm_1,\\
\frac{2C\|v\|}{N+|t|}     \quad  \text{for} \;\;  \lambda \in
\Delta^\pm_2.
\end{cases}
$$
On the other hand,
 (\ref{1.15}) and (\ref{1.17}) imply that
$$
\|( R^0_\lambda VR^0_\lambda \|_{L^2 \to L^\infty} \leq
C \|v\|
[a(\lambda -\tau_1) + a(\lambda -\tau_2)].
$$
Therefore, by (\ref{ec31}), (\ref{ec32}) and (\ref{ec35}) it follows
that
$$
\left \|\sum_{m=2}^\infty R^0_\lambda (VR^0_\lambda)^m \varphi_k^\nu
\right \|_\infty
\leq \begin{cases}
C_3 T^{-2}  & \text{if} \;\; \lambda  \in \Delta^\pm_1, \\
 C_3 \frac{h (t)}{N+|t|}
& \text{if} \;\; \lambda \in \Delta^\pm_2, \; Im \, \lambda =t,
\end{cases}
$$
where $C_3 = C_3 (\|v\|, bc). $  The integral in (\ref{ec40}) is sum
of integrals on $\Delta^\pm_1 $ and $\Delta^\pm_2.$ In view of the
latter estimates, if we choose $T=N, $ then the integrals over
$\Delta^\pm_1 $ go to zero as $N \to \infty.$ On the other hand, by
(\ref{ec33}) $ h (t) \asymp \frac{1}{1+|t|} ,$ so the integrals over
$\Delta^\pm_2 $ do not exceed a multiple of
$$
\int_{-\infty}^\infty \frac{1}{(1+|t|)(N+|t|) } dt = 2\frac{\log
N}{N-1} \to 0 \quad \text{as} \;\; N \to \infty.
$$
This completes the proof.
\end{proof}

\begin{Corollary}
\label{corec}
In the above notations, for every
regular $bc$, $\; L^2$-potential $v$ and $F\in L^2([0,\pi], \mathbb{C})^2)$
\begin{equation}
\label{ec100}  \lim_N \left \| \left ( S_N - S^0_N \right ) F \right
\|_{\infty} = 0 \quad \quad \Longleftrightarrow  \quad  \lim_N  \|
A_N F \|_{\infty} = 0.
\end{equation}

\end{Corollary}

Next we give conditions on $v$ or $F$ that guarantee the existence of
the right-hand limit in (\ref{ec100}). Let  us fix a regular $bc,$
and let $\Phi = \{\varphi_k^\nu \} $ be the corresponding Riesz basis
of eigenfunctions of the free operator $L_{bc}^0 $ given in
Theorem~\ref{thm01}.

\begin{Lemma}
\label{lema3}
 Under the above assumptions, for every $\varphi_k^\nu
\in \Phi $
\begin{equation}
\label{ec101} \|A_N \varphi_k^\nu\|_{\infty} \to 0 \quad \text{as}
\;\; N \to \infty.
\end{equation}
\end{Lemma}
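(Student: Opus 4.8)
The plan is to evaluate $A_N \varphi_k^\nu$ explicitly by residues and then estimate the resulting tail. Fix $k\in 2\mathbb{Z}$ and $\nu\in\{1,2\}$, and take $N$ large enough that the pole $k+\tau_\nu$ lies strictly inside $R_{NT}$ (here and below $\tau_1=\tau_2=\tau_*$ in the regular but not strictly regular case, with $R_{NT}$ from (\ref{2.26a})). Combining the diagonal action (\ref{1.10}) of $R^0_\lambda$ with the matrix representation (\ref{m2}) of $V$ gives
$$
R^0_\lambda V R^0_\lambda \varphi_k^\nu = \sum_{j,\eta} \frac{w^{\eta\nu}(j+k)}{(\lambda - j - \tau_\eta)(\lambda - k - \tau_\nu)}\,\varphi_j^\eta,
$$
so that, interchanging sum and integral (justified by the uniform $L^2\to L^\infty$ bounds from Section~3),
$$
A_N \varphi_k^\nu = \sum_{j,\eta} w^{\eta\nu}(j+k)\left(\frac{1}{2\pi i}\int_{\partial R_{NT}} \frac{d\lambda}{(\lambda - j - \tau_\eta)(\lambda - k - \tau_\nu)}\right)\varphi_j^\eta.
$$

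Next I would compute each scalar contour integral by the residue theorem. Choosing $T>T_0$ and $N\in 2\mathbb{N}$ guarantees that no pole $j+\tau_\eta$ sits on $\partial R_{NT}$: since $|\mathrm{Re}(\tau_1-\tau_2)|\leq 1$, the real offset of any such pole from the center of $R_{NT}$ is at most $1/2$, so the poles with $|j|\leq N$ are interior and those with $|j|\geq N+2$ are exterior. When both $j+\tau_\eta$ and $k+\tau_\nu$ are interior and distinct, their residues $1/(b-a)$ and $1/(a-b)$ cancel; when they coincide the integrand is $(\lambda-k-\tau_\nu)^{-2}$, whose residue is $0$. Hence every term with $|j|\leq N$ drops out, and only the terms with $|j|>N$ survive, each contributing the single residue at $\lambda=k+\tau_\nu$:
$$
A_N\varphi_k^\nu = \sum_{\eta}\sum_{|j|>N} \frac{w^{\eta\nu}(j+k)}{\,k + \tau_\nu - j - \tau_\eta\,}\,\varphi_j^\eta.
$$

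Finally I would bound this tail in $L^\infty$. By (\ref{ec8}) we have $\|\varphi_j^\eta\|_\infty\leq C$, and for $|j|>N$ with $k$ fixed one has $|k+\tau_\nu - j - \tau_\eta|\geq \frac{1}{4}|j|$ once $N$ is large. The Cauchy inequality then yields
$$
\|A_N\varphi_k^\nu\|_\infty \leq C\sum_{\eta}\sum_{|j|>N}\frac{|w^{\eta\nu}(j+k)|}{|j|} \leq C'\sum_{\eta}\Big(\sum_{|j|>N}|w^{\eta\nu}(j+k)|^2\Big)^{1/2}\Big(\sum_{|j|>N}\frac{1}{j^2}\Big)^{1/2}.
$$
Since $w^{\eta\nu}\in\ell^2(2\mathbb{Z})$ by (\ref{m3}), both factors tend to $0$ as $N\to\infty$, which proves (\ref{ec101}).

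I expect the residue bookkeeping in the second step to be the crux: one must pin down exactly which poles lie inside $R_{NT}$ and verify that the entire ``both poles interior'' contribution cancels, so that $A_N\varphi_k^\nu$ collapses to a genuine $\ell^2$-tail over $|j|>N$. Once this reduction is in hand, the concluding estimate is routine.
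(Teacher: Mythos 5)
Your proof is correct and follows essentially the same route as the paper's: expand $R^0_\lambda V R^0_\lambda\varphi_k^\nu$ via (\ref{1.10}) and Lemma~\ref{lem10}, collapse the contour integral by the Residue Theorem to the tail sum $\sum_{\eta}\sum_{|j|>N} w^{\eta\nu}(j+k)\,\varphi_j^\eta/\bigl((k+\tau_\nu)-(j+\tau_\eta)\bigr)$, and finish with (\ref{ec8}) and the Cauchy inequality against the $\ell^2$-bound (\ref{m3}). The only difference is presentational: you spell out the residue bookkeeping (cancellation of interior pole pairs and the vanishing double-pole residue) that the paper leaves implicit, and you bound the denominator by $|j|/4$ rather than $|k-j|/2$, which is an equivalent estimate.
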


\begin{proof}
Fix $ \varphi_k^\nu $ and consider $N>|k|.$ By (\ref{1.10}) and
Lemma~\ref{lem10},
$$
(A_N \varphi_k^\nu)(x) = \frac{1}{2\pi i} \int_{\partial R_{NT}}
 \sum_{\eta=1}^2 \sum_{j\in \mathbb{Z}} \frac{w^{\eta \nu}(j+k)}
{(\lambda - j -\tau_\eta) (\lambda - k -\tau_\nu)} \varphi^\eta_j (x)
d\lambda,
$$
so by the Residue Theorem
$$
(A_N \varphi_k^\nu)(x)  = \sum_{\eta=1}^2 \sum_{|j|>N} \frac{w^{\eta
\nu}(j+k) }{(k+\tau_\nu ) - (j+ \tau_\eta )} \varphi^\eta_j (x). $$

If $j, k \in 2\mathbb{Z}, \; j\neq k, $ then
\begin{equation}
\label{ec11} |k-j+\tau_\nu - \tau_\eta| \geq |k-j| - |Re (\tau_\nu -
\tau_\eta)| \geq |k-j| -1 \geq \frac{|k-j|}{2}.
\end{equation}
Therefore, in view of (\ref{ec8}) and (\ref{1.18}), it follows that
$$
\|A_N \varphi_k^\nu \|_\infty \leq  4C\sum_{|j|>N} \frac{r(j+k)}{|k-j
|}.
$$
Thus,  the Cauchy inequality implies
$$
\|A_N \varphi_k^\nu\|_\infty \leq 4C \|r\| \left ( \sum_{|j|>N}
\frac{1}{|k-j|^2} \right )^{1/2} \leq \frac{4C \|r\|}{(N-|k|)^{1/2}}
\to 0
$$
as $N\to \infty, $ which completes the proof.
\end{proof}

Fix an $L^2$-function $F: [0,\pi] \to \mathbb{C}^2 $ and consider
\begin{equation}
\label{ec5}
(A_N F)(x) =  \frac{1}{2 \pi i} \int_{\partial R_{NT}}
R^0_\lambda VR^0_\lambda F  d\lambda,
\end{equation}
Let
$
 F = \sum_{\nu =1}^2\sum_{k\in 2\mathbb{Z}} F^\nu_k \varphi^\nu_k
$ be the expansion of $F$ about the basis $\{\varphi^\nu_k \}.$ By
the matrix representation of the operators  $V$ and $R^0_{bc} $ it
follows that
\begin{equation}
\label{ec80} (A_N F)(x) = \frac{1}{2\pi i} \int_{\partial R_{NT}}
\sum_{\nu, \eta=1}^2 \sum_{k\in 2\mathbb{Z}} \sum_{j\in 2\mathbb{Z}}
\frac{w^{\eta \nu}(j+k) F^\nu_k} {(\lambda - j -\tau_\eta) (\lambda -
k -\tau_\nu)} \varphi^\eta_j (x) d\lambda.
\end{equation}
The Residue Theorem implies
$$
(A_N F)(x) = \sum_{\nu, \eta=1}^2 \sum_{ |k|\leq N} \sum_{|j|>N}
\frac{w^{\eta \nu}(j+k)  F^\nu_k}{(k+\tau_\nu ) - (j+ \tau_\eta )}
\varphi^\eta_j (x) $$ $$ + \sum_{\nu, \eta=1}^2 \sum_{|k|> N}
\sum_{|j|\leq N} \frac{w^{\eta \nu}(j+k)  F^\nu_k}{(j+ \tau_\eta
)-(k+\tau_\nu )} \varphi^\eta_j (x).
$$
We set
\begin{equation}
\label{ec9}
g_m = \max \{\left |F^1_m \right |, \left |F^2_m \right |\};
\end{equation}
then $g=(g_m) \in \ell^2(2\mathbb{Z}) $ and $\|g\| \leq  const \cdot
\|F\|. $   Therefore, in view of  (\ref{ec8}), (\ref{1.18})  and
(\ref{ec9}), it follows that
\begin{equation}
\label{ec9a}
  \|(A_N F)\|_\infty \leq 8C (\sigma_1 (N) + \sigma_2 (N)),
\end{equation}
  where
\begin{equation}
\label{ec12} \sigma_1 (N) = \sum_{|k|\leq N} \sum_{|j|>N} \left
|\frac{r(j+k)g_k}{j-k} \right |, \qquad \sigma_2 (N)  = \sum_{|k|> N}
\sum_{|j|\leq N} \left | \frac{r(j+k)g_k}{k-j} \right |,
\end{equation}
and $j, k , N $ are even numbers.

\begin{Lemma}
\label{lema2}
 (a)  If $ g= (g_k) \in \ell^p
(2\mathbb{Z}), \; p\in (1,2) $ and $ r = (r(k)) \in \ell^2
(2\mathbb{Z}),$ then
\begin{equation}
\label{ec12a}  \sigma_\mu (N) \leq C(p) \|r\| \|g\|_p, \quad \mu
=1,2.
\end{equation}

(b) If $g= (g_k) \in \ell^2 (2\mathbb{Z})$ and  $ r = (r(k)) \in
\ell^p (2\mathbb{Z}), \; p\in (1,2),$  then
\begin{equation}
\label{ec12b}  \sigma_\mu (N) \leq C(p) \|r\|_p \|g\|, \quad \mu
=1,2.
\end{equation}

 (c) If $  \exists \delta > 1 : \; |g|^2_\delta :
 =\sum_k |g_k|^2 [\log (|k|+e)]^\delta  <
\infty$ and $r \in \ell^2 (2\mathbb{Z}),$ then
\begin{equation}
\label{ec12c}  \sigma_\mu (N) \leq  C(\delta) \|r\| |g|_\delta, \quad
\mu =1,2.
\end{equation}

(d) If $  \exists \delta >1 : \; |r|^2_\delta :
 =\sum_k |r(k)|^2 [\log (|k|+e)]^\delta  <
\infty$ and $g \in \ell^2 (2\mathbb{Z}),$ then
\begin{equation}
\label{ec12d}  \sigma_\mu (N) \leq  C(\delta) |r|_\delta \|g\|, \quad
\mu =1,2.
\end{equation}

\end{Lemma}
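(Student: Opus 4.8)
The plan is to treat $\sigma_1(N)$ and $\sigma_2(N)$ uniformly. In both, the sequence $g$ is attached to the index $k$ and $r$ to the coupled index $j+k$; they differ only in whether $k$ runs over $\{|k|\le N\}$ and $j$ over $\{|j|>N\}$ (for $\sigma_1$) or the reverse (for $\sigma_2$). Since these index sets are complementary, the inner variable decouples from the outer one, and I would estimate the inner sum first and the outer sum second. Writing $d_k=\bigl||k|-N\bigr|+1$ for one plus the distance of $k$ to the boundary $\pm N$, everything will rest on the elementary kernel bounds
\[
\sum_{|j|>N}\frac{1}{(j-k)^2}\le \frac{C}{d_k}\ \ (|k|\le N),\qquad
\sum_{|j|\le N}\frac{1}{(k-j)^2}\le \frac{C}{d_k}\ \ (|k|>N),
\]
and their H\"older analogue $\sum (j-k)^{-p'}\le C\,d_k^{-(p'-1)}$ for $p'>1$; these hold because the nearest admissible $j$ lies at distance comparable to $d_k$. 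That $j,k,N$ are even affects only the constants.

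For parts (a) and (c) the hypothesis is on $g$ while $r\in\ell^2$, so I would apply Cauchy--Schwarz to the inner sum in $j$, extracting $\|r\|$ (a partial $\ell^2$ sum of $r$) times the kernel factor; by the bounds above this reduces $\sigma_\mu(N)$ to a constant multiple of $\|r\|\sum_k |g_k|\,d_k^{-1/2}$. For (a) I would finish with H\"older in $k$ with exponents $p,p'$, the companion sum $\sum_k d_k^{-p'/2}$ converging uniformly in $N$ since $p'/2>1$; this yields $C(p)\|r\|\,\|g\|_p$. For (c) I would instead use Cauchy--Schwarz in $k$ against the weight $[\log(|k|+e)]^\delta$, so that $|g|_\delta^2$ appears, and bound $\sum_k d_k^{-1}[\log(|k|+e)]^{-\delta}$ uniformly in $N$ --- for $\sigma_2$ directly, via $\log(|k|+e)\ge\log(d_k+e)$ and the convergence of $\sum_d d^{-1}[\log(d+e)]^{-\delta}$ for $\delta>1$, and for $\sigma_1$ by a short split near the edge $|k|\approx N$, where $\log(|k|+e)\sim\log N$ turns $\sum_k d_k^{-1}\sim\log N$ into $[\log N]^{1-\delta}\to0$.

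For parts (b) and (d) the hypothesis is on $r$ while $g\in\ell^2$, so I would reverse the order: Cauchy--Schwarz in $k$ first splits off $\|g\|$ and reduces matters to $\sum_k T_k^2\le C\,\|r\|_p^2$ (resp.\ $C\,|r|_\delta^2$), where $T_k=\sum_j |r(j+k)|/|j-k|$. In (b), H\"older in $j$ gives $T_k\le C\,\|r\|_p\,d_k^{-1/p}$, so that $\sum_k T_k^2\le C\,\|r\|_p^2\sum_k d_k^{-2/p}$ converges because $2/p>1$. In (d), Cauchy--Schwarz in $j$ against the weight $[\log(|j+k|+e)]^\delta$ produces the factor $|r|_\delta$ together with the weighted kernel sum $U_k=\sum_j (j-k)^{-2}[\log(|j+k|+e)]^{-\delta}$, and it remains to show $\sum_k U_k\le C(\delta)$.

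I expect this last sum to be the main obstacle, since the weight $[\log(|j+k|+e)]^{-\delta}$ gives no uniform gain --- it is of order $1$ when $j+k$ is near $0$. My remedy is to split the inner sum by the sign of $j$ relative to $k$. In the aligned regime (e.g.\ $j>N$, $k\ge0$, or $j\ge0$, $k>N$) one has $j+k\ge N$ and $j+k\ge d_k$, so $\log(|j+k|+e)$ is logarithmically large and the contribution is controlled uniformly in $N$ by the very same $\delta>1$ convergence mechanisms used in part (c). In the opposed regime $|j+k|$ may be small, but then $|j-k|\ge N$, so the kernel $(j-k)^{-2}$ alone makes that part summable, telescoping to $O(1)$ after summation over $k$. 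Combining the two regimes gives $\sum_k U_k\le C(\delta)$, which proves (d) and completes all four parts.
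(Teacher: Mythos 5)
Your proposal is correct, and in parts (a)--(c) it is essentially the paper's own argument: the same kernel bounds (the paper's (\ref{ec18}), (\ref{ec18a})), the same Cauchy--Schwarz/H\"older pairings (the order in which you apply them in (b) is immaterial), and in (c) only a cosmetic difference at the edge region $N/2<|k|\le N$, where the paper uses $\log(e+|k|)\ge \log(N+2-|k|)$ while you use $\log(|k|+e)\gtrsim\log N$. The genuine divergence is in part (d). The paper keeps the inner Cauchy--Schwarz first and exploits the triangle inequality: in $\sigma_1$, $|j|>N\ge |k|$ forces $|j+k|\ge |j|-|k|\ge N+2-|k|$, so the inner sum is bounded by the tail $\mathcal{E}_{N+2-|k|}(r)\le |r|_\delta\,[\log(N+2-|k|)]^{-\delta/2}$; the logarithmic gain is thus automatically paired with the distance of $k$ to the boundary, and a single outer Cauchy--Schwarz finishes with no case analysis (and similarly for $\sigma_2$ with $|k|-N$ in place of $N+2-|k|$). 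You instead perform the outer Cauchy--Schwarz first, which leaves the weight sitting at $j+k$ inside $U_k$ and forces your split by relative sign. That split is sound, but one step deserves to be written out: in the opposed regime of $\sigma_2$ the per-$k$ bound $O(1/N)$ does \emph{not} suffice, since there $k$ runs over the infinite set $|k|>N$; what is true (and what your word ``telescoping'' must mean) is the joint estimate obtained by swapping the sums, using that opposite signs give $|j-k|=|j|+|k|$:
$$
\sum_{|k|>N}\;\sum_{|j|\le N,\ \mathrm{opp}}\frac{1}{(j-k)^2}
\;\le\; \sum_{|j|\le N}\frac{C}{N+|j|}\;\le\; C,
$$
uniformly in $N$. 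With that line made explicit, your (d) is complete. The trade-off: the paper's tail observation is shorter and sign-free, while your decomposition makes visible exactly where the logarithmic weight can fail to help (small $|j+k|$) and shows that kernel decay alone covers that region.
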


\begin{proof}
Throughout the proof $j,k\in 2\mathbb{Z} $ and $N \in 2\mathbb{N}.$
We will use the following inequalities: if $|k|\leq N $ then for $s
\geq 2$
\begin{equation}
\label{ec18} \sum_{|j|>N} \frac{1}{|j-k|^s} \leq
\int_{N+1-|k|}^\infty  \frac{1}{x^s} dx \leq
\frac{1}{(N+1-|k|)^{s-1}},
\end{equation}
and if $|k|>N $ then for $s \geq 2$
\begin{equation}
\label{ec18a} \sum_{|j|\leq N} \frac{1}{|k-j|^s} \leq
\int_{|k|-N-1}^\infty  \frac{1}{x^s} dx \leq
\frac{1}{(|k|-N-1)^{s-1}}.
\end{equation}

Suppose $|k|\leq N. $ In case (a), the Cauchy inequality and
(\ref{ec18}) with $s=2$ imply
$$
 \sum_{|j|>N} \left | \frac{r(j+k)}{j-k} \right |\leq
  \|r\|  \left ( \sum_{|j|>N}
\frac{1}{(j-k)^2} \right )^{1/2} \leq \|r\| \,
\frac{1}{(N+1-|k|)^{1/2}}.
$$
Therefore, by the H\"older inequality (with $q=p/(p-1)>2),$ it
follows
$$
\sigma_1 (N) \leq    \|r\| \sum_{|k|\leq N}
\frac{|g_k|}{(N+1-|k|)^{1/2}} \leq $$ $$\|r\| \left ( \sum_{|k|\leq
N}|g_k|^p \right )^{1/p}  \left ( \sum_{|k|\leq N} (N+1-|k|)^{-q/2}
\right )^{1/q} \leq  C(p) \|r\| \|g\|_p,
$$
where $C(p) =\left ( \sum_{m=1}^\infty  2m^{-q/2}\right )^{1/q}, \;
q= (p-1)/p. $

In case (b),  the H\"older inequality (with $q >2$) and (\ref{ec18})
with $s=q$ imply
$$
\sigma_1 (N) \leq \sum_{|k|\leq N}
  |g_k|\, \|r\|_p
 \left ( \sum_{|j|>N}
\frac{1}{(j-k)^q} \right )^{1/q} \leq
 \sum_{|k|\leq N}
  \frac{|g_k|\, \|r\|_p}{(N+1-|k|)^{1-1/q}}.$$
By the Cauchy inequality, we obtain
$$
\sigma_1 (N) \leq \|r\|_p \|g\| \left (  \sum_{m=1}^\infty
\frac{2}{m^{2-2/q}}  \right )^{1/2} \leq C(p)\|r\|_p \|g\|
$$
with $C(p) =\left (  \sum_{m=1}^\infty \frac{2}{m^{2/p}}  \right
)^{1/2}. $

In case (c), the Cauchy inequality and (\ref{ec18}) with $s=2 $ imply
$$
\sigma_1 (N) \leq \|r\|\sum_{|k|\leq N}
\frac{|g_k|}{(N+1-|k|)^{1/2}}.
$$
If $|k|\leq N/2, $ then $ N+1 -|k| \geq N+1-N/2 =(N+2)/2, $ so
applying again the Cauchy inequality we obtain
$$
\sum_{|k|\leq N/2} \frac{|g_k|}{(N+1-|k|)^{1/2}} \leq  \|g\| \left (
\sum_{|k|\leq N/2} \frac{1}{N+1-|k|}   \right )^{1/2} \leq \|g\|.
$$
On the other hand, if $|k|>N/2 $ then $|k|\geq N+2-|k|,$ so
$$
\sum_{N/2<|k|\leq N} \frac{|g_k|}{(N+1-|k|)^{1/2}} \leq
\sum_{N/2<|k|\leq N} \frac{|g_k| [\log (e+|k|)]^{\delta/2}
}{(N+1-|k|)^{1/2} [\log (N+2 -|k|)]^{\delta/2} }
$$
$$
\leq |g|_\delta  \left ( \sum_{N/2<|k|\leq N} \frac{1}{(N+1-|k|)[\log
(N+2 -|k|)]^{\delta}} \right )^{1/2} \leq C_1 (\delta) |g|_\delta
$$
with $C_1 (\delta) =\left (\sum_{m=1}^\infty \frac{2}{m (\log
(m+1))^\delta}  \right )^{1/2}.$ Since $\|g\| \leq |g|_\delta, $  it
follows that
$$ \sigma_1 (N) \leq C (\delta) \|r\| |g|_\delta \quad \text{with} \;\;
   C (\delta)= 1+C_1 (\delta).  $$

In case (d), the Cauchy inequality implies
\begin{equation}
\label{ec17}
 \sum_{|j|>N} \left | \frac{r(j+k)}{j-k} \right |\leq
  \mathcal{E}_{N+2-|k|}(r)  \left ( \sum_{|j|>N}
\frac{1}{(j-k)^2} \right )^{1/2},
\end{equation}
where $\mathcal{E}_m (r)= \left ( \sum_{|i|\geq m} |r(i)|^2 \right
)^{1/2}. $ Since
$$
\left ( \mathcal{E}_{m} (r)\right )^2 \leq \frac{1}{(\log m)^\delta }
\sum_{|k|\geq m} |r(k)|^2 [\log (e+|k|)]^\delta \leq
\frac{|r|_\delta^2}{(\log m)^\delta },
$$
in view of (\ref{ec17}) and (\ref{ec18}) with $s=2 $ it follows
$$
\sigma_1 (N) \leq  |r|_\delta \sum_{|k|\leq N}
\frac{|g_k|}{(N+1-|k|)^{1/2} (\log (N+2-|k|))^{\delta/2} }
$$
$$
\leq  |r|_\delta \|g\| \cdot  \left (\sum_{m=1}^\infty \frac{2}{m
(\log (m+1))^\delta}  \right )^{1/2}  \leq C(\delta) |r|_\delta \|g\|
$$
with $C(\delta) =\left (\sum_{m=1}^\infty \frac{2}{m (\log
(m+1))^\delta}  \right )^{1/2}.$

This completes the proof of (\ref{ec12a})--(\ref{ec12d})  for
$\sigma_1 (N). $ The proof in the case of sums $\sigma_2 (N) $  is
essentially the same -- but then $|k| \geq N+2 $ and $|j| \leq N, $
so one has to use (\ref{ec18a}) instead of (\ref{ec18}) and replace
in all formulas  $N+1-|k| $ by $|k|-N-1. $ For example, in case (c),
the Cauchy inequality and (\ref{ec18a}) with $s=2 $ imply
$$
\sigma_2 (N) \leq \|r\|\sum_{|k|> N} \frac{|g_k|}{(|k|-N-1)^{1/2}}.
$$
Therefore, again by the Cauchy inequality, it follows

$$
\sigma_2 (N) \leq \|r\|\sum_{|k|> N} \frac{|g_k| [\log
(e+|k|)]^{\delta/2} }{(|k|-N-1)^{1/2} [\log (|k|-N)]^{\delta/2} }
$$
$$
\leq \|r\| \,|g|_\delta  \left ( \sum_{|k|> N}
\frac{1}{(|k|-N-1)^{1/2} [\log (|k|-N)]^{\delta/2} } \right )^{1/2}
\leq C_1 (\delta) \|r\| \, |g|_\delta
$$
with $C_1 (\delta) =\left (\sum_{m=1}^\infty \frac{2}{m (\log
(m+1))^\delta}  \right )^{1/2}.$ We omit the details about cases (a),
(b) and (d).

\end{proof}

\begin{Proposition}
\label{propAN} Given a regular $bc,$ $L^2$-potential $v$ and $F \in
L^2([0,\pi], \mathbb{C}^2), $ let $g=(g_k)_{k\in 2\mathbb{Z}}$ be
defined by (\ref{ec9}) and let $r=(r(k))_{k\in 2\mathbb{Z}}$ be
defined by Lemma~\ref{lem10} and (\ref{1.18}). Then
\begin{equation} \label{ec54}
\|A_N (F)\|_\infty \to 0 \quad \text{as} \;\;  N \to \infty
\end{equation}
whenever one of the following conditions holds:

(a) $\exists p\in (1,2) $ such that $ (g_k) \in \ell^p
(2\mathbb{Z}),$ where $(g_k) $ is defined by (\ref{ec9});

(b) $\exists p\in (1,2)$ such that $ (r(k)) \in \ell^p
(2\mathbb{Z}),$ where $(r(k)) $ is defined by (\ref{1.18});

(c) $\exists \delta >1 $ such that $ \sum_{k} |g_k|^2 [\log (
|k|+e)]^\delta < \infty;$

(d) $\exists \delta >1 $ such that $ \sum_{k} |r(k)|^2 [\log (
|k|+e)]^\delta < \infty.$

Moreover, each of the conditions (a), (b), (c) and (d) guarantees
that
$$
\|(S_N - S_N^0) F\|_\infty \to 0 \quad \text{as} \;\;  N \to \infty.
$$
\end{Proposition}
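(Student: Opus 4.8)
The plan is to upgrade the uniform bounds of Lemma~\ref{lema2} to genuine decay by combining them with the pointwise statement of Lemma~\ref{lema3} through an approximation argument. The natural starting point is (\ref{ec9a}), which bounds $\|A_N(F)\|_\infty$ by a multiple of $\sigma_1(N)+\sigma_2(N)$, so it suffices to show $\sigma_\mu(N)\to 0$ for $\mu=1,2$. Lemma~\ref{lema2} supplies only that $\sigma_\mu(N)$ stays \emph{bounded} under each of the hypotheses (a)--(d); the whole content of the proposition is to convert this boundedness into convergence to zero.

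The key structural observation I would use is that the sums $\sigma_1(N),\sigma_2(N)$ in (\ref{ec12}), being sums of nonnegative terms, are subadditive under splitting either of the sequences $g$ or $r$. In cases (a) and (c) the hypothesis is a summability condition on $g$, so I would write $g=g'+g''$ with $g'$ finitely supported and $\|g''\|_p$ (respectively $|g''|_\delta$) as small as desired; in cases (b) and (d) the hypothesis is a summability condition on $r$, so I would instead write $r=r'+r''$ with $r'$ finitely supported and $\|r''\|_p$ (respectively $|r''|_\delta$) small. Applying Lemma~\ref{lema2} to the small remainder bounds its contribution to $\sigma_\mu(N)$ uniformly in $N$ by an arbitrarily small constant, the complementary sequence being automatically in $\ell^2$ (namely $r\in\ell^2$ by (\ref{1.18a}) and $g\in\ell^2$ since $F\in L^2$).

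It then remains to show that the finitely supported part produces $\sigma_\mu(N)\to 0$. When $g'$ is supported in $|k|\le M$, this is exactly the estimate behind Lemma~\ref{lema3}: for $N>M$ one has $\sum_{|j|>N}|r(j+k)|/|j-k|\le\|r\|\,(\sum_{|j|>N}(j-k)^{-2})^{1/2}\to 0$ by (\ref{ec18}), while $\sigma_2(N)[g']=0$ once $N>M$. When instead $r'$ is supported in $|m|\le M$, I would argue directly: in $\sigma_1(N)[r']$ the constraints $|k|\le N<|j|$ and $|j+k|\le M$ force $j$ within $M$ of $-k$ and force $|k|>N-M$, so only boundary indices survive, where $|j-k|\ge 2N-M$; hence
$$\sigma_1(N)[r']\le\frac{(2M+1)\,\|r'\|_\infty}{2N-M}\sum_{N-M<|k|\le N}|g_k|.$$
The prefactor is $O(1/N)$ and the remaining sum runs over boundedly many indices all tending to infinity, so it tends to $0$ because $g\in\ell^2$; thus $\sigma_1(N)[r']\to 0$, and $\sigma_2(N)[r']$ is symmetric.

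Combining the two parts yields $\limsup_N\sigma_\mu(N)\le\varepsilon$ for every $\varepsilon>0$, hence $\sigma_\mu(N)\to 0$ and $\|A_N(F)\|_\infty\to 0$ by (\ref{ec9a}). The concluding assertion $\|(S_N-S_N^0)F\|_\infty\to 0$ is then immediate from Corollary~\ref{corec}. I expect the one genuinely delicate point to be the finitely supported $r$ case underlying (b) and (d): there the summability lives on the potential rather than on $F$, so the required decay cannot come from $g'$ and must instead be extracted from the $\ell^2$-tail of $g$ together with the $O(1/N)$ smallness of $1/(j-k)$ along the anti-diagonal $j\approx-k$ near the boundary $|k|\approx N$.
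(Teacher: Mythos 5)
Your proof is correct, and for hypotheses (a) and (c) it is essentially the paper's argument: split the $g$-sequence into a finitely supported piece plus a tail that is small in the relevant norm, control the tail uniformly in $N$ by Lemma~\ref{lema2} (with $r\in\ell^2$ supplied by (\ref{1.18a})), and let the finite piece tend to zero --- the paper runs this at the level of functions, writing $F=\tilde F+(F-\tilde F)$ and quoting Lemma~\ref{lema3} for the finite linear combination $F-\tilde F$, which is the same computation you carry out on $\sigma_\mu(N)$ directly. Where you genuinely diverge is in cases (b) and (d). The paper treats all four cases by the \emph{same} splitting of $F$: since Lemma~\ref{lema2}(b) and (d) bound $\sigma_\mu(N)$ by $C(p)\|r\|_p\|g\|$, respectively $C(\delta)|r|_\delta\|g\|$, with the plain $\ell^2$-norm of $g$ on the right, one simply chooses the tail $\tilde F$ so that $\|\tilde g\|_{\ell^2}<\varepsilon$ and again invokes Lemma~\ref{lema3} for the remainder; no new estimate is required. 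You instead split the potential-side sequence $r=r'+r''$, which obliges you to prove from scratch that a finitely supported $r'$ yields $\sigma_\mu(N)[r']\to 0$; your anti-diagonal localization does this correctly (the constraints $|j+k|\le M$, $|k|\le N<|j|$ force $|k|>N-M$ and $|j-k|\ge 2|j|-|j+k|>2N-M$, leaving $O(M^2)$ surviving pairs with an $O(1/N)$ factor), so the argument closes, but at the cost of an extra lemma-sized computation. What the paper's choice buys is uniformity: because the smallness can always be injected through the $g$-factor of the bilinear bounds in Lemma~\ref{lema2}, a single approximation scheme covers (a)--(d). What your variant buys is a self-contained verification that does not lean on the asymmetric form of Lemma~\ref{lema2}(b),(d), and it makes explicit where the decay actually comes from in the potential-side cases. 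Your final step via Corollary~\ref{corec} matches the paper's appeal to Proposition~\ref{propBN}.
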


\begin{proof}
Suppose $F \in L^2([0,\pi], \mathbb{C}^2) $  and $F= \sum_{k\in
2\mathbb{Z}} \sum_{\nu =1}^2 F^\nu_k \varphi^\nu_k $ is the expansion
of $F$ about the basis $\Phi. $ Let (a) holds (with $g=(g_k)$ defined
by $g_k = \max(|F_k^1|, |F_k^2|),$ i.e., by (\ref{ec9})).

 Fix $\varepsilon >0 $ and choose  an $N_1 \in 2\mathbb{Z}$ such
that $\sum_{|k|>N_1 } |g_k|^p < \varepsilon^p. $ Set
$$\tilde{F} = \sum_{|k|>N_1} \sum_{\nu =1}^2 F^\nu_k \varphi^\nu_k,
\quad \tilde{g}=(\tilde{g}_k), \; \tilde{g}_k =0 \;\text{if} \;
|k|\leq N_1, \;\; \tilde{g}_k = g_k \; \text{if} \; |k|>N_1.
$$
By (\ref{ec9a}) and Lemma \ref{lema2}(a), it follows that
$$
\|A_N \tilde{F} \|_\infty \leq 16 C C(p)\|r\| \|\tilde{g}\|_p \leq 16
C C(p)\|r\| \cdot\varepsilon.
$$
On the other hand, since $F-\tilde{F} $ is a finite linear
combination of basis functions $\varphi_k^\nu, $  Lemma~\ref{lema3}
implies that
$$
\|A_N (F-\tilde{F} \|_\infty \to 0 \quad \text{as} \;\; n \to \infty.
$$
Therefore,  $ \limsup \|A_N F \|_\infty \leq 16 C C(p)\|r\|
\cdot\varepsilon $  for every $\varepsilon >0, $ so (\ref{ec54})
holds.

The proof is similar in the cases (b), (c) and (d) -- we use
Lemma~\ref{lema3} and, respectively, parts (b), (c) and (d) of
Lemma~\ref{lema2}.

Of course, in view of Proposition~\ref{propBN}, each of the
conditions (a), (b), (c)  and (d) implies that $ \lim_N \|(S_N -
S_N^0) F\|_\infty=0. $
\end{proof}

For a given regular $bc,$ Proposition~\ref{propAN} gives sufficient
conditions for equiconvergence in terms of matrix representation of
the potential $v=\begin{pmatrix} 0&P\\Q &0  \end{pmatrix}$ and
coefficients of the expansion of $F=\begin{pmatrix} F_1\\F_2
\end{pmatrix}$ about the basis $\Phi $ (which consists of root
functions of $L_{bc}^0 $).

In particular, for periodic ($Per^+ $) or antiperiodic ($Per^- $)
boundary conditions,
$$
Per^\pm: \;\; y_1 (\pi) =\pm y_1 (0), \quad  y_2 (\pi) =\pm y_2 (0),
$$
we may consider the following bases of eigenfunctions:
$$
\Phi_{Per^+} = \left \{ \varphi^1_k= \begin{pmatrix}  e^{-ikx}  \\ 0
\end{pmatrix}, \; \varphi^2_k=\begin{pmatrix} 0 \\ e^{ikx}
\end{pmatrix}, \; \; k \in 2\mathbb{Z} \right \},
$$$$
\Phi_{Per^-} = \left \{ \varphi^1_k=\begin{pmatrix} e^{-ikx}  \\ 0
\end{pmatrix}, \; \varphi^2_k=\begin{pmatrix} 0 \\ e^{ikx}
\end{pmatrix}, \; \; k \in 1+2\mathbb{Z} \right \}.
$$
Now the matrix representation of the operator of multiplication by
$v$ is
$$
V \sim \left [ \begin{array}{cc} V^{11} & V^{12}\\V^{21}&V^{22}
\end{array} \right ], \quad  V_{jk}^{11}=V_{jk}^{22}=0, \;\;
V_{jk}^{12}=p(-j-k), \; V_{jk}^{21}=q(j+k),
$$
where $p(m)$ and $q(m), \; m\in 2\mathbb{Z}$ are, respectively,  the
Fourier coefficients of the functions $P$ and $Q$ about the system
$\{e^{imx},\, m\in 2\mathbb{Z}\},$  and the corresponding sequence
$(r(m)) $ (compare with Lemma~\ref{lem10} and (\ref{1.18})) is
$$ r(m) = \max (|p(-m)|, |q(m)|), \quad m\in 2\mathbb{Z}. $$
Therefore, the following holds (compare with \cite[Prop. 7.3]{Mit04}).
\begin{Corollary}
Let $v=\begin{pmatrix} 0&P\\Q &0  \end{pmatrix}.$ Suppose there is
$\delta
>1 $ such that $$ \sum_{m \in 2\mathbb{Z}} (|p(m)|^2+ |q(m)|^2 (\log
|k|)^\delta < \infty,$$ where $p(m)$ and $q(m)$
 are, respectively,  the Fourier coefficients of
the functions $P$ and $Q$ about the system $\{e^{imx},\, m\in
2\mathbb{Z}\}.$ Then we have, for periodic $Per^+ $ or antiperiodic
$Per^- $ boundary conditions,
$$
\|(S_N -S_N^0)F\|_\infty \to 0 \quad \text{as} \;\; N \to \infty
\quad \forall F \in L^2 ([0,\pi], \mathbb{C}^2).
$$
\end{Corollary}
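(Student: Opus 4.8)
The plan is to reduce the Corollary directly to condition~(d) of Proposition~\ref{propAN}, since the analytic substance has already been established there and in Proposition~\ref{propBN}. The starting point is the explicit description of the matrix sequence $r=(r(m))$ for periodic or antiperiodic boundary conditions displayed just above the statement, namely $r(m)=\max(|p(-m)|,|q(m)|)$ for $m\in 2\mathbb{Z}$, where $p(m),q(m)$ are the Fourier coefficients of $P,Q$. Everything then follows from comparing the weighted $\ell^2$ norm of $r$ with the hypothesis.

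First I would record the elementary pointwise bound $|r(m)|^2=\max(|p(-m)|^2,|q(m)|^2)\le |p(-m)|^2+|q(m)|^2$ and sum it against the weight $[\log(e+|m|)]^\delta$. Since the index set $2\mathbb{Z}$ is symmetric, reindexing $m\mapsto -m$ in the $p$-part gives
$$\sum_{m\in 2\mathbb{Z}}|r(m)|^2[\log(e+|m|)]^\delta \le \sum_{m\in 2\mathbb{Z}}\bigl(|p(m)|^2+|q(m)|^2\bigr)[\log(e+|m|)]^\delta,$$
and the right-hand side is finite by hypothesis. The only cosmetic point is that the statement is written with $\log|m|$ rather than $\log(e+|m|)$; these weights are comparable for large $|m|$ and differ in only finitely many terms, so this changes nothing. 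Hence the hypothesis of the Corollary is precisely condition~(d) of Proposition~\ref{propAN} applied to the potential $v$.

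Next I would observe that $g\in\ell^2(2\mathbb{Z})$ is automatic: since $\Phi$ is a Riesz basis (Theorem~\ref{thm01}), every $F\in L^2([0,\pi],\mathbb{C}^2)$ has an $\ell^2$ coefficient sequence, and $g_k=\max(|F_k^1|,|F_k^2|)$ is therefore in $\ell^2$, which is exactly the extra requirement needed to invoke Lemma~\ref{lema2}(d). Thus condition~(d) of Proposition~\ref{propAN} holds, and its ``Moreover'' clause yields $\|(S_N-S_N^0)F\|_\infty\to 0$ for every such $F$. This handles $Per^+$ and $Per^-$ uniformly, because the expression for $r(m)$ is the same in both cases: for $Per^-$ the eigenfunction indices lie in $1+2\mathbb{Z}$, yet the sum $j+k$ still runs over $2\mathbb{Z}$, so $r$ is again indexed by $2\mathbb{Z}$.

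There is essentially no hard analytic step here, since Lemma~\ref{lema2}(d), Proposition~\ref{propAN}, and Proposition~\ref{propBN} do all the work. The only thing to watch is the bookkeeping: transferring the weighted summability condition from the pair $(p,q)$ to $r$ correctly through the $\max$ and the reindexing, and confirming that the antiperiodic case is subsumed by the same $2\mathbb{Z}$-indexed sequence $r$. In that sense the ``obstacle'' is purely notational rather than mathematical.
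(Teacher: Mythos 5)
Your proof is correct and follows essentially the same route as the paper: for $Per^\pm$ the matrix sequence is $r(m)=\max(|p(-m)|,|q(m)|)$, so the hypothesis transfers (after the harmless reindexing and weight comparison you note) into condition (d) of Proposition~\ref{propAN}, whose ``Moreover'' clause gives the conclusion for every $F\in L^2([0,\pi],\mathbb{C}^2)$. Your added remarks on the $\ell^2$ coefficients of $F$ and on the $1+2\mathbb{Z}$ indexing for $Per^-$ are exactly the bookkeeping the paper leaves implicit.
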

However, one needs to impose on $F$  conditions depending on $bc $ to
guarantee equiconvergence for every $L^2$-potential $v.$ For example,
if $bc= Per^\pm,$ Proposition~\ref{propAN} implies the following.
\begin{Corollary}
Suppose $v$ is an $L^2$-potential matrix. Then
$$
\|(S_N -S_N^0)F\|_\infty \to 0 \quad \text{as} \;\; N \to \infty
$$
holds

 (i) for $bc=Per^+, $ if there is $ \delta >1 $ such that $
\sum_{m \in 2\mathbb{Z}} (F_{1,m}|^2+ |F_{1,m}|^2 [\log(e+
|m|)]^\delta < \infty,$ where  $F_{1,m}$ and $F_{2,m}$
 are, respectively,  the Fourier coefficients of
the functions $F_1$ and $F_2$ about the system $\{e^{imx},\, m\in
2\mathbb{Z}\}; $

(ii) for $bc=Per^-, $ if there is $\delta >1 $ such that $ \sum_{m
\in 2\mathbb{Z}} (F_{1,m}|^2+ |F_{1,m}|^2 [\log(e+ |m|)]^\delta <
\infty,$ where  $F_{1,m}$ and $F_{2,m}$
 are, respectively,  the Fourier coefficients of
the functions $F_1$ and $F_2$ about the system $\{e^{imx},\, m\in 1+
2\mathbb{Z}\}.$

\end{Corollary}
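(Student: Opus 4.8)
The plan is to deduce this statement directly from Proposition~\ref{propAN}, by translating the hypothesis on $F$ into condition (c) of that proposition. The starting observation is that for both $Per^+$ and $Per^-$ the displayed eigenfunction systems $\Phi_{Per^\pm}$ are \emph{orthonormal} with respect to the scalar product (\ref{0}): for $j,k$ lying in the relevant coset of $2\mathbb{Z}$ one has $\frac1\pi\int_0^\pi e^{\pm ikx}\,\overline{e^{\pm ijx}}\,dx=\delta_{jk}$. Consequently each system is its own biorthogonal system, so the expansion $F=\sum_{k,\nu}F_k^\nu\varphi_k^\nu$ has coefficients $F_k^\nu=\langle F,\varphi_k^\nu\rangle$, and these can be read off componentwise.

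First I would identify these coefficients with the Fourier coefficients of $F_1,F_2$. Writing out the two components of $F=\sum_k(F_k^1\varphi_k^1+F_k^2\varphi_k^2)$ gives $F_1=\sum_kF_k^1e^{-ikx}$ and $F_2=\sum_kF_k^2e^{ikx}$; comparing with $F_1=\sum_mF_{1,m}e^{imx}$ and $F_2=\sum_mF_{2,m}e^{imx}$ yields
$$F_k^1=F_{1,-k},\qquad F_k^2=F_{2,k}.$$
Hence the sequence $g=(g_k)$ defined in (\ref{ec9}) is $g_k=\max(|F_{1,-k}|,|F_{2,k}|)$, with $k$ running through $2\mathbb{Z}$ in case (i) and through $1+2\mathbb{Z}$ in case (ii).

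The core step is to verify that the assumed weighted summability of $(F_{1,m}),(F_{2,m})$ is precisely condition (c) of Proposition~\ref{propAN} for this $g$. Since $\log(e+|{-k}|)=\log(e+|k|)$, I would bound
$$\sum_k|g_k|^2[\log(e+|k|)]^\delta\le\sum_k\bigl(|F_{1,-k}|^2+|F_{2,k}|^2\bigr)[\log(e+|k|)]^\delta=\sum_m\bigl(|F_{1,m}|^2+|F_{2,m}|^2\bigr)[\log(e+|m|)]^\delta,$$
where the last equality comes from the substitutions $m=-k$ in the first sum and $m=k$ in the second. The right-hand side is finite by hypothesis, so condition (c) holds and Proposition~\ref{propAN} yields $\|(S_N-S_N^0)F\|_\infty\to0$. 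Cases (i) and (ii) are handled by the identical computation, differing only in whether the common index coset is $2\mathbb{Z}$ or $1+2\mathbb{Z}$.

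The only point requiring a word of care is this coset shift in case (ii): Proposition~\ref{propAN} and the underlying Lemma~\ref{lema2} are phrased with the coefficient sequence indexed by $2\mathbb{Z}$. However, for $Per^-$ one has $j,k\in1+2\mathbb{Z}$, so the gaps $j-k$ are again even and the sequence $r(m)=\max(|p(-m)|,|q(m)|)$ entering through $r(j+k)$ is still indexed by $2\mathbb{Z}$. The estimates in Lemma~\ref{lema2} depend only on these even gaps, on $\ell^2$-norms, and on the logarithmic weights, none of which is sensitive to whether the coset is $2\mathbb{Z}$ or $1+2\mathbb{Z}$; thus Proposition~\ref{propAN} applies verbatim in case (ii). I do not expect any genuine obstacle beyond this bookkeeping.
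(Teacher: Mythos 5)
Your proof is correct and takes essentially the same route as the paper: the paper derives this corollary by noting that for $Per^\pm$ the expansion coefficients of $F$ about the exponential bases $\Phi_{Per^\pm}$ are exactly the Fourier coefficients of $F_1,F_2$ (up to the index reflection $k\mapsto -k$ in the first component), so the hypothesis is precisely condition (c) of Proposition~\ref{propAN}. Your explicit verification of orthonormality and of the $1+2\mathbb{Z}$ coset bookkeeping for $Per^-$ fills in details the paper leaves implicit, but introduces no new idea.
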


Next we discuss what conditions guarantee equiconvergence property
simultaneously for all regular $bc.$

Recall that if $\Omega= (\Omega (k))_{k \in 2\mathbb{Z}}  $ is a
sequence of positive numbers (weight sequence), one may consider the
weighted sequence space
$$
\ell^2 (\Omega,2\mathbb{Z})= \left \{x=(x_k): \; \sum_{k \in
2\mathbb{Z}} (|x_k|\Omega (k))^2 < \infty \right \}
$$
and the corresponding Sobolev space
\begin{equation}
\label{sob}
H(\Omega) =\left  \{ f = \sum_{k \in 2\mathbb{Z}} f_k e^{ikx} : \;\;
(f_k) \in \ell^2 (\Omega) \right \}.
\end{equation}
In particular, consider the Sobolev weights
\begin{equation}
\label{sob1}
 \Omega_\alpha (k) = (1+k^2)^{\alpha/2}, \quad k
\in 2\mathbb{Z},
\end{equation}
and the logarithmic weights
\begin{equation}
\label{sob2}
 \omega_\beta (k) = (\log(e+|k|))^\beta, \quad k
\in 2\mathbb{Z}, \quad \beta \in \mathbb{R}.
\end{equation}

Let $H^\alpha $ and $h^\beta $ denote the corresponding
Sobolev spaces (\ref{sob}). Of course, $H^\alpha \subset h^\beta $
if $\alpha >0 $  and    $h^\beta \subset H^\alpha $
if $\alpha <0   $   for any $\beta.$

\begin{Lemma}
\label{lemsob}
Let $g \in C^1 ([0,\pi]).$

(a)  If $f\in H^\alpha,  \;  - 1/2 < \alpha < 1/2,   $
then $f \cdot g \in H^\alpha. $

(b)  If $f\in h^\beta,  \;  - \infty < \beta < \infty,   $
then $f \cdot g \in h^\beta . $

\end{Lemma}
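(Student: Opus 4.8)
The plan is to pass to the Fourier side, where multiplication by $g$ becomes a convolution operator on coefficient sequences. Writing $f=\sum_{k\in 2\mathbb{Z}}f_k e^{ikx}$ and $g=\sum_{m\in 2\mathbb{Z}}g_m e^{imx}$ (the system $\{e^{imx}:m\in 2\mathbb{Z}\}$ is a complete orthonormal basis of $L^2([0,\pi])$, so $g\in C^1$ is expanded regardless of periodicity), one has $(fg)_n=\sum_{k\in 2\mathbb{Z}}f_k\,g_{n-k}$, i.e. $f\mapsto fg$ is convolution by $(g_m)$. Since by definition (\ref{sob}) the space $H(\Omega)$ is the image of $\ell^2(\Omega)$ under the Fourier map, both (a) and (b) are equivalent to the boundedness of convolution by $(g_m)$ on $\ell^2(\Omega_\alpha,2\mathbb{Z})$ for $-1/2<\alpha<1/2$ and on $\ell^2(\omega_\beta,2\mathbb{Z})$ for every $\beta$. (For $\alpha<0$, where $f$ need not be a genuine function, I would simply take this convolution as the definition of $fg$.) Thus everything reduces to estimates on the sequence $(g_m)$ and on convolution operators in weighted $\ell^2$ spaces.

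The key point is to extract the precise decay of $(g_m)$ from $g\in C^1$. Integration by parts over $[0,\pi]$, using that $e^{-im\pi}=1$ for even $m$, gives for $m\neq 0$
\[
g_m=\frac{g(0)-g(\pi)}{i\pi m}+\frac{(g')_m}{i m},
\]
where $(g')_m$ are the Fourier coefficients of $g'\in C([0,\pi])\subset L^2$, so $\big((g')_m\big)\in\ell^2(2\mathbb{Z})$. Consequently the convolution operator $M_g$ splits as $M_g=g_0 I+c_g\,\mathcal H+T$, where $g_0 I$ is scalar, $\mathcal H$ is the discrete Hilbert transform (convolution by $(1/m)_{m\neq0}$) with $c_g=(g(0)-g(\pi))/(i\pi)$, and $T$ is convolution by $h_m=(g')_m/(im)$. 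By Cauchy--Schwarz $\sum_m|h_m|\le \|g'\|_{L^2}\big(\sum_{m\neq0}m^{-2}\big)^{1/2}<\infty$, so $h\in\ell^1(2\mathbb{Z})$.

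It remains to bound the three pieces on the weighted spaces. The scalar term is trivial. For $T$ I would use the elementary shift estimate $\|h*x\|_\Omega\le\big(\sum_m|h_m|\,C_\Omega(m)\big)\|x\|_\Omega$ with $C_\Omega(m)=\sup_l \Omega(l+m)/\Omega(l)$. Peetre's inequality gives $C_{\Omega_\alpha}(m)\le(1+|m|)^{|\alpha|}$ and $C_{\omega_\beta}(m)\le C(\log(e+|m|))^{|\beta|}$; combined with $|h_m|\le|(g')_m|/|m|$ and Cauchy--Schwarz, the resulting series $\sum_m (1+|m|)^{|\alpha|}\,|(g')_m|/|m|$ converges precisely when $|\alpha|<1/2$, while the corresponding logarithmic series converges for every $\beta$, so $T$ is bounded in both cases.

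The genuine difficulty, and the main obstacle, is the Hilbert part $\mathcal H$: one must establish the weighted norm inequality $\sum_n|(\mathcal H x)_n|^2\,\Omega(n)^2\le C\sum_n|x_n|^2\,\Omega(n)^2$. This is exactly the discrete analogue of a Muckenhoupt $A_2$ weighted bound for the Hilbert transform. The power weight $(1+n^2)^{\alpha}$ lies in $A_2$ if and only if $-1/2<\alpha<1/2$, which is the source of the restriction on $\alpha$ in (a); the slowly varying weights $(\log(e+|n|))^{2\beta}$ belong to $A_2$ for all $\beta$, which explains the absence of any restriction in (b). This weighted boundedness of the discrete Hilbert transform is precisely the technical content deferred to the Appendix, where it is proved directly (e.g. via a Schur-test argument adapted to these specific weights); granting it, the three estimates combine to give that $M_g$ is bounded on $\ell^2(\Omega_\alpha)$ for $|\alpha|<1/2$ and on $\ell^2(\omega_\beta)$ for all $\beta$, which is the assertion of the lemma.
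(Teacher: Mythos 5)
Your proposal is correct, and its skeleton coincides with the paper's: both isolate the discrete Hilbert transform as the singular part of the multiplier and defer its weighted boundedness to the Muckenhoupt-type $A_2$ theory (the paper cites \cite{HMW} and verifies the $A_2$ condition (\ref{a1}) for the specific weights in Lemma \ref{lemA2}; it does not prove the weighted bound ``directly,'' contrary to your parenthetical guess, but this is immaterial). Where you genuinely diverge is the treatment of the remainder. The paper splits on the function side, $g(x)=mx+g_1(x)$ with $g_1$ periodic and $C^1$, and handles multiplication by $g_1$ through a separate multiplier result (Lemma \ref{lemA1}): a ternary-form/Schur-test estimate valid for any $g_1\in H^1_{per}$ under the doubling and growth conditions (\ref{a2})--(\ref{a3}) on the weight. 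You instead integrate by parts on the coefficient side, getting $g_m=\frac{g(0)-g(\pi)}{i\pi m}+\frac{(g')_m}{im}$, so the remainder is convolution by the nearly-$\ell^1$ sequence $h_m=(g')_m/(im)$, which you control by the elementary bound $\|h\ast x\|_\Omega\le\bigl(\sum_m|h_m|\,C_\Omega(m)\bigr)\|x\|_\Omega$ together with Peetre-type estimates on $C_\Omega(m)$. Your route is more elementary for this piece (no Schur test, no abstract weight hypotheses beyond the Peetre bounds), and it has a concrete advantage: it treats negative exponents $-1/2<\alpha<0$ and $\beta<0$ directly, whereas the paper's Appendix, as written, only verifies its hypotheses for monotone weights with $\alpha,\delta\ge 0$ (see (\ref{a0}), (\ref{a11}), (\ref{a12})), leaving the negative cases of Lemma \ref{lemsob} to an unstated duality argument; you also sensibly address what $f\cdot g$ means when $\alpha<0$. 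What the paper's route buys in exchange is a reusable, more general statement (Lemma \ref{lemA1}): boundedness of multiplication by an arbitrary $H^1_{per}$ function on $H(\Omega)$ for a whole class of weights, rather than an estimate tied to the $1/|m|$ decay produced by integration by parts.
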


Proof is given in  Appendix.

\begin{Theorem}
\label{EC} For regular $bc, $  Dirac potentials $v = \begin{pmatrix}
0 & P\\ Q &0 \end{pmatrix} $ with $P, Q \in L^2 ([0,\pi])$ and $F=
\begin{pmatrix} F_1\\ F_2 \end{pmatrix} $ with $F_1, F_2  \in L^2([0,\pi],$
\begin{equation}
\label{ec10} \left \| \left ( S_N - S^0_N \right ) F \right
\|_{\infty} \to 0 \quad \text{as} \quad N \to \infty
\end{equation}
whenever one of the following conditions is satisfied:

(a) $\exists \beta >1 $ such that $$ \sum_{k\in 2\mathbb{Z}}
(|F_{1,k}|^2 +|F_{2,k}|^2) (\log(e+|k|))^\beta  < \infty, $$ where $
(F_{1,k})_{k\in 2\mathbb{Z}} $ and $(F_{2,k})_{k\in 2\mathbb{Z}}$
are, respectively, the Fourier coefficients of $F_1 $ and $F_2$ about
the system $\{e^{ikx},\, k\in 2\mathbb{Z}\};$

(b) $\exists \beta >1 $ such that $$ \sum_{k\in 2\mathbb{Z}}
(|p(k)|^2 +|q(k)|^2) (\log(e+|k|))^\beta < \infty, $$ where $ (p(k))_{k\in
2\mathbb{Z}} $ and $(q(k))_{k\in 2\mathbb{Z}}$ are, respectively, the
Fourier coefficients of $P $ and $Q$ about the system $\{e^{ikx},\,
k\in 2\mathbb{Z}\}.$

In particular, if  $F_1,\, F_2 $ are functions of bounded variation
or  $P, Q $ are functions of bounded variation, then (\ref{ec10})
holds.
\end{Theorem}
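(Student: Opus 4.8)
The plan is to reduce each of the two hypotheses to one of the sufficient conditions already established in Proposition~\ref{propAN}, and then to quote that proposition. All of the analytic work has been absorbed into the preliminary results; what remains is to rewrite the hypotheses, which are stated in terms of the ordinary Fourier coefficients of $F_1,F_2$ (resp.\ $P,Q$) about $\{e^{ikx}\}$, as statements about the abstract sequences $(g_k)$ and $(r(k))$ of Proposition~\ref{propAN}. The single tool effecting both translations is Lemma~\ref{lemsob}(b): multiplication by a $C^1$ (here in fact $C^\infty$) function preserves the logarithmic Sobolev space $h^\beta$.

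First I would treat hypothesis (a), which says exactly that $F_1,F_2\in h^\beta$ for some $\beta>1$. Writing $F=\sum_{k,\nu}F_k^\nu\varphi_k^\nu$ with $F_k^\nu=\langle F,\tilde\varphi_k^\nu\rangle$, I recall from Theorem~\ref{thm01} that in every regular case the biorthogonal functions $\tilde\varphi_k^\nu$ carry the factors $e^{-ikx}$ and $e^{ikx}$ in their two components, multiplied by fixed $C^\infty$ functions (exponentials $e^{\pm i\overline{\tau}x}$, possibly times a linear factor in the non-strictly-regular case, cf.~(\ref{91})). Hence each $F_k^\nu$ is a fixed finite linear combination of the $(\pm k)$-th Fourier coefficients, about $\{e^{ikx}\}$, of the products of $F_1,F_2$ with those $C^\infty$ factors. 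By Lemma~\ref{lemsob}(b) these products remain in $h^\beta$, so $\sum_k|F_k^\nu|^2(\log(e+|k|))^\beta<\infty$; since $g_k=\max(|F_k^1|,|F_k^2|)$ obeys $|g_k|^2\le|F_k^1|^2+|F_k^2|^2$, the sequence $(g_k)$ of~(\ref{ec9}) satisfies condition~(c) of Proposition~\ref{propAN} with $\delta=\beta$, which gives~(\ref{ec10}).

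Next, for hypothesis (b) I would invoke Lemma~\ref{lem10}, by which the matrix entries are $w^{\eta\nu}(m)=p^{\eta\nu}(-m)+q^{\eta\nu}(m)$, where $p^{\eta\nu},q^{\eta\nu}$ are the Fourier coefficients of $g^{\eta\nu}P$ and $h^{\eta\nu}Q$ for fixed $g^{\eta\nu},h^{\eta\nu}\in C^\infty$. Hypothesis (b) reads $P,Q\in h^\beta$, so Lemma~\ref{lemsob}(b) again gives $g^{\eta\nu}P,\,h^{\eta\nu}Q\in h^\beta$, whence $\sum_m|w^{\eta\nu}(m)|^2(\log(e+|m|))^\beta<\infty$ for all four pairs $\eta,\nu$. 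Taking the pointwise maximum, the sequence $r(m)$ of~(\ref{1.18}) satisfies condition~(d) of Proposition~\ref{propAN} with $\delta=\beta$, and the proposition yields~(\ref{ec10}) once more. The final clause is then immediate: a function of bounded variation has Fourier coefficients of order $O(1/|k|)$, so $\sum_k|F_{j,k}|^2(\log(e+|k|))^\beta\le C\sum_{k\ne0}(\log(e+|k|))^\beta/k^2<\infty$ for, e.g., $\beta=2$, verifying (a); the same bound applied to $P,Q$ verifies (b).

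The step I expect to matter most is the clean identification, in both cases, that the abstract coefficients $(g_k)$ and $(r(k))$ reduce to Fourier coefficients of $F_1,F_2$ (resp.\ $P,Q$) against $C^\infty$ multipliers, combined with the invariance supplied by Lemma~\ref{lemsob}(b). Granting that lemma---proved in the Appendix via a $C^1$-multiplier bound on weighted sequence spaces and the boundedness of the discrete Hilbert transform---the theorem is a synthesis of Proposition~\ref{propAN}, Theorem~\ref{thm01} and Lemma~\ref{lem10} rather than a new estimate.
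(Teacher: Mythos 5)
Your proof is correct and follows essentially the same route as the paper: both reduce hypothesis (a) to condition (c) and hypothesis (b) to condition (d) of Proposition~\ref{propAN} by observing, via Theorem~\ref{thm01} and Lemma~\ref{lem10}, that the coefficients $F_k^\nu$ and $w^{\eta\nu}(m)$ are Fourier coefficients of products of $F_1,F_2$ (resp.\ $P,Q$) with fixed $C^\infty$ factors, then applying Lemma~\ref{lemsob}(b), and both settle the bounded-variation case through the $O(1/|k|)$ decay of Fourier coefficients. If anything, your treatment of (b) is slightly cleaner than the paper's, which at that point slips into invoking the Sobolev weight $\Omega_\alpha$ with $\alpha\in(0,1/2)$ rather than the logarithmic weight $h^\beta$ actually appearing in the hypothesis and in condition (d), though the intended argument is identical to yours.
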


\begin{proof}
Suppose a regular $bc $ is fixed. Let $\Phi = (\varphi_k^\nu) $ and
$\tilde{\Phi} = (\tilde{\varphi}_k^\nu) $ be the corresponding Riesz
basis of eigenvectors of $L_{bc}^0$  and its biorthogonal system
constructed in Theorem~\ref{thm1}.

Suppose (a) holds for a function $F.$ In view of the explicit
formulas (\ref{21*}) and (\ref{91}) for the biorthogonal system
$\tilde{\Phi}$ it follows that the expansion coefficients  $ F_k^\nu
= \langle F, \tilde{\varphi}_k^\nu  \rangle $ can be represented as
$$
F^\nu_k = f^\nu_{1, -k} + f^\nu_{2,k}, \quad k \in 2\mathbb{Z}, \quad
\nu =1,2,
$$
where $ f^\nu_{1,k} $ and $ f^\nu_{2,k} $ are the Fourier
coefficients of functions of the form
$$
f^\nu_1 (x)  = h^\nu_1 (x) F_1 (x) + h^\nu_2 (x) F_2 (x), \quad \nu
=1,2,
$$
with $h^\nu_1 (x), h^\nu_2 (x) \in C^\infty([0,\pi]). $ Now
Lemma~\ref{lemsob}(b) implies that
$$ \sum_{k\in 2\mathbb{Z}}
(|F^1_k|^2 +|F^2_k|^2) (\log(e+|k|))^\beta  < \infty, $$ but then Condition (c)
of Proposition~\ref{propAN} holds, hence (\ref{ec10}) follows.

Suppose (b) holds with some $\alpha \in (0,1/2).$ In view of
Lemma~\ref{lem10} and its proof, the sequences $(w^{\eta \nu} (m))_{
 m \in 2\mathbb{Z}}, $ which generate the matrix representation of
the operator $V$ (see (\ref{m2}) and (\ref{m3})) are given by
(\ref{m4}) in terms of the Fourier coefficients of some products of
$P$ and $ Q$ by $C^\infty $-functions (see (\ref{m5})). Therefore, by
Lemma~\ref{lemsob} we have $(w^{\eta \nu} (m)) \in \ell^2
(\Omega_\alpha).$ In view of (\ref{1.18}), this implies that
Condition (d) in Proposition~\ref{propAN} holds, hence (\ref{ec10})
follows.

It is well-known (see \cite[Ch. 2, Sect. 4, Theorem 4.12]{Z}), that
if $f: [0,\pi] $ is a function of
bounded variation then its Fourier coefficients $f_k = \frac{1}{\pi}
\int_0^\pi f(x) e^{-ikx} dx $  satisfy $|f_k| \leq C/|k|, \; k\neq 0,
$ where $C=C(f).$ Therefore, if $F_1, \, F_2 $ are functions of
bounded variation then (a) holds, and if $P, \, Q $ are functions of
bounded variation then (b) holds, so in both cases (\ref{ec10})
follows, which completes the proof.

Of course, one can handle the case of functions of bounded variation
directly, without using Lemma~\ref{lemsob}. Indeed, the matrix
representation coefficients of $V$ and the expansion coefficients of
$F$ about the basis $\Phi = \{\varphi_k^\nu\} $ are coming from the
Fourier coefficients of products of  $ P, Q $ and $F_1, F_2 $ with
some smooth functions. Since a product of a function of bounded
variation with a smooth function is also a function of bounded
variation, the corresponding sequences $(r(m)) $ and $(g_k)$ are
dominated by $const/|k|,$ so they are in the space
$\ell(\Omega_\alpha) $ for $ \alpha \in (0,1/2).$  Thus,
respectively, (c) or (d) in Proposition~\ref{propAN} holds, so the
claim follows.

\end{proof}

\section{Point-wise convergence of spectral decompositions }

 It is well-known that if $f$  is a function of bounded
variation on $[0,\pi]$ then its Fourier series with respect to the
system $\{e^{imx}, \; m\in 2\mathbb{Z}\}$ converges  point-wise to
$\frac{1}{2} [f(x-0) + f(x+0)] $  for $x \in (0,\pi),$  and to
$\frac{1}{2} [f(\pi-0) + f(0+0)] $  for $x=0, \pi.$  More precisely,
the following holds.

\begin{Lemma}
\label{CT}(see \cite[Vol 1, Theorem 8.14]{Z}) If $f: [0,\pi] \to
\mathbb{C} $   is a function of bounded variation, then
\begin{equation}
\label{c1}
\lim_{M \to \infty}  \sum_{m=-M}^M   \langle  f(x),  e^{imx} \rangle e^{imx}
  = \begin{cases} \frac{1}{2} [f(x-0) + f(x+0)]    &
 \text{if}  \; \; x \in (0,\pi),  \\ \frac{1}{2} [f(\pi-0) + f(0+0)]  &
 \text{if}  \;  \; x=0, \pi.  \end{cases}
\end{equation}
Moreover, the convergence is uniform on every closed subinterval of $
(0,\pi)$ on which $f$ is continuous, and the convergence is uniform on the closed interval $[0, \pi]$ if and only if
$f$ is continuous and $f(0) = f(\pi).$
\end{Lemma}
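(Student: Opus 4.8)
The plan is to reduce the statement to the classical Dirichlet--Jordan theorem on the circle and then to prove the latter with the Dirichlet kernel. First I would pass to the substitution $u=2x$: setting $g(u)=f(u/2)$ for $u\in[0,2\pi]$, the function $g$ is again of bounded variation, it is $2\pi$-periodic, and its one-sided limits satisfy $g(u\pm 0)=f(x\pm 0)$ at interior points, while at the identified point $u=0\equiv 2\pi$ one has $g(0+0)=f(0+0)$ and $g(2\pi-0)=f(\pi-0)$. A direct computation gives $\langle f,e^{imx}\rangle=\frac{1}{2\pi}\int_0^{2\pi}g(u)e^{-inu}\,du$ with $m=2n$, so the even-exponential partial sum of order $M$ in (\ref{c1}) equals the symmetric Fourier partial sum of $g$ of order $M/2$ evaluated at $u=2x$, and $M\to\infty$ is the same as letting the order of that sum tend to infinity. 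Thus (\ref{c1}) together with all three convergence assertions becomes the corresponding statement for the ordinary Fourier series of the BV function $g$ on the circle, with $x=0,\pi$ collapsing to the single point $u=0$.

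For the pointwise limit I would write $S_M g(u)=\frac{1}{2\pi}\int_{-\pi}^{\pi}g(u+t)D_M(t)\,dt$ with the Dirichlet kernel $D_M(t)=\sin((M+\tfrac12)t)/\sin(t/2)$, which is even and satisfies $\frac{1}{2\pi}\int_{-\pi}^{\pi}D_M=1$. Subtracting $\tfrac12[g(u+0)+g(u-0)]$ and splitting into the $t>0$ and $t<0$ halves reduces everything to showing that $\int_0^\pi \varphi(t)D_M(t)\,dt\to 0$ for a BV function $\varphi$ on $[0,\pi]$ with $\varphi(0+)=0$ (take $\varphi(t)=g(u+t)-g(u+0)$, and the reflected analogue). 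Writing $1/\sin(t/2)=2/t+b(t)$ with $b$ bounded, the $b$-term and the whole contribution of $[\delta,\pi]$ vanish by the Riemann--Lebesgue lemma, and for the singular piece $\int_0^\delta \varphi(t)\,\frac{2\sin((M+\frac12)t)}{t}\,dt$ I would split $\varphi$ into its monotone Jordan components and apply Bonnet's second mean value theorem; since $\varphi(0+)=0$ the boundary term drops and the estimate is bounded by $V_u^{u+\delta}(g)$ times $\sup_{a,b}\bigl|\int_a^b\frac{\sin s}{s}\,ds\bigr|$, a finite absolute constant. Choosing $\delta$ so that the local variation $V_u^{u+\delta}(g)$ is small then forces the limit to be $0$.

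For uniform convergence on a closed subinterval $K\subset(0,\pi)$ of continuity of $f$, the same decomposition works once the two estimates are made uniform in the base point $u$. The near-zero bound $C\,V_u^{u+\delta}(g)$ is uniformly small because the variation function $v(u)=V_0^u(g)$ is continuous wherever $g$ is continuous, hence uniformly continuous on the compact set $K$; the far part $\int_\delta^\pi[g(u+t)-g(u+0)]D_M(t)\,dt$ is handled by a uniform Riemann--Lebesgue estimate obtained from one Riemann--Stieltjes integration by parts, using that the integrand has total variation bounded by a constant (depending on $\delta$) times the fixed variation $V(g)$, uniformly in $u$. For the final equivalence I would note that the partial sums are $\pi$-periodic trigonometric polynomials, hence continuous; if $f$ is continuous with $f(0)=f(\pi)$ then the $\pi$-periodic extension of $f$ is continuous on the whole compact circle, so the uniform result just proved applies globally. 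Conversely, if $S_M f\to F$ uniformly on $[0,\pi]$ then $F$ is continuous and $\pi$-periodic, while the pointwise formula gives $\lim_{x\to x_0\pm}F(x)=f(x_0\pm 0)$; continuity of $F$ forces $f(x_0-0)=f(x_0+0)$ at every interior $x_0$ and, at the identified endpoint, $f(0+0)=f(\pi-0)$, i.e.\ $f$ is continuous and $f(0)=f(\pi)$.

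I expect the main obstacle to be the uniformity in the last two claims rather than the bare pointwise limit: the pointwise statement follows cleanly from Bonnet's theorem and the uniform boundedness of the sine integral, but promoting it to uniform convergence requires the uniform continuity of the variation function on compacta of continuity together with a uniform, variation-based Riemann--Lebesgue estimate, and the endpoint equivalence further hinges on correctly tracking the identification $x=0\sim x=\pi$ induced by the $\pi$-periodicity of the even exponential system.
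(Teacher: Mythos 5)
Your proposal is correct, but note that the paper does not actually prove Lemma \ref{CT}: it is quoted directly from Zygmund \cite[Vol.~1, Theorem 8.14]{Z} (the classical Dirichlet--Jordan theorem), so there is no internal proof to compare against. What your argument supplies is precisely the two ingredients that the citation leaves implicit. First, the reduction of the even-exponential system $\{e^{imx},\ m\in 2\mathbb{Z}\}$ on $[0,\pi]$, with the normalized inner product $\frac{1}{\pi}\int_0^\pi$, to the ordinary Fourier system on the circle via $u=2x$; your bookkeeping here is right ($\langle f,e^{imx}\rangle=\hat g(n)$ for $m=2n$, the order-$M$ even sum equals the order-$M/2$ circle sum), and in particular the identification $x=0\sim x=\pi$ is what produces the endpoint value $\frac{1}{2}[f(\pi-0)+f(0+0)]$ in (\ref{c1}). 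Second, the classical theorem itself, which you prove by the standard route: Dirichlet kernel, Jordan decomposition, Bonnet's second mean value theorem together with the uniform boundedness of $\int_a^b \frac{\sin s}{s}\,ds$ for the pointwise claim; and for the uniform claims, uniform smallness of the local variation $V_u^{u+\delta}(g)$ over a compact set of continuity points (which follows from the absence of atoms of the variation measure on that set) plus a variation-based, $O_\delta(1/M)$ integration-by-parts bound for the far part of the kernel integral. This is essentially the argument in the cited reference, so your proof fills in what the paper delegates to \cite{Z}. One caveat, which your "only if" direction shares with the statement as written: uniform convergence of the partial sums cannot detect the value of $f$ at a single point, so it only forces the one-sided limits of $f$ to coincide everywhere and $f(0+0)=f(\pi-0)$; the conclusion "$f$ is continuous and $f(0)=f(\pi)$" must therefore be read for the normalized representative $f(x)=\frac{1}{2}[f(x+0)+f(x-0)]$, i.e.\ modulo removable discontinuities.
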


For systems of o.d.e., an interesting point-wise
convergence statement has been proven in \cite{BL23}, pp. 127--128,
but under rather  restrictive assumptions.
For example, the matrices $W_a $
and $W_b$ (see \cite{BL23}, lines 5--6 on p. 64 and
Formula (7) there, or Formula (46) on p. 87) are assumed to be invertible
but this never happens in the case of separated $bc$ like
(\ref{s3}) in Section 7 below.
Another strong  assumption is that the spectrum  $Sp (L_{bc})$ is
eventually simple (see \cite{BL23}, lines 9--11 on p. 98). In
general, this assumption is not satisfied for regular but not
strictly regular $bc.$

We don't impose any assumption on the boundary conditions but
regularity. The main result of this section is the following.

\begin{Theorem}
 \label{GCT}
Let $bc $  be a regular  boundary  condition given by (\ref{8a}), and let  $\Phi= \{\varphi^\nu_k,\, k\in 2\mathbb{Z}, \,
\nu =1,2\} $ and $\tilde{\Phi}= \{\tilde{\varphi}^\nu_k, \, k\in
2\mathbb{Z}, \, \nu =1,2\} $ be the corresponding Riesz  basis of
root functions of $L^0_{bc} $ and its biorthogonal system which are
constructed in Theorem~\ref{thm01}.  If $f,g: [0,\pi] \to \mathbb{C}
$ are functions of bounded variation which are continuous at 0 and
$\pi,$ then
\begin{equation}
\label{c2} \lim_{M \to \infty}  \sum_{m=-M}^M  \sum_{\nu=1}^2 \left
\langle
\begin{pmatrix}  f\\g  \end{pmatrix}, \tilde{\varphi}_m^\nu
\right \rangle \varphi_m^\nu (x)  =
\begin{pmatrix}  \tilde{f}(x) \\ \tilde{g}(x)     \end{pmatrix},
\end{equation}
where
 \begin{equation}
\label{c03}
 \begin{pmatrix}   \tilde{f} (x) \\ \tilde{g} (x) \end{pmatrix}
=\frac{1}{2} \begin{pmatrix} f(x-0) + f(x+0) \\ g(x-0) + g(x+0)
\end{pmatrix}   \quad   \text{for}  \; \; x \in (0,\pi)
\end{equation}
and
\begin{equation}
\label{c3}
\begin{pmatrix}  \tilde{f}(x) \\ \tilde{g}(x)     \end{pmatrix} =
\begin{cases}
{\displaystyle \frac{1}{2}} \begin{pmatrix} f(0) -b f(\pi) -a g(0) \\
   \frac{d}{bc-ad} f(0) + g(0) -\frac{b}{bc-ad} g(\pi)  \end{pmatrix}
 &  \text{if} \;\; x=0, \vspace{1mm}  \\
{\displaystyle \frac{1}{2}} \begin{pmatrix}  -\frac{c}{bc-ad}
    f(0)+f(\pi)+ \frac{a}{bc-ad} g(\pi) \\ -d f(\pi)  - c g(0) +g(\pi)    \end{pmatrix}
 &  \text{if} \;\; x=\pi.
     \end{cases}
\end{equation}
Moreover, if both $f(\pi -t)$ and $g(t)$ are  continuous on some
closed subinterval of $(0,\pi),$ then the convergence (\ref{c2}) is
uniform on that interval. The convergence is uniform on the
closed interval $[0, \pi]$ if and only if $f$  and $g$ are continuous
on $[0, \pi]$ and $\begin{pmatrix}  f \\  g  \end{pmatrix} $
satisfies the boundary condition $bc$ given by (\ref{8a}).
\end{Theorem}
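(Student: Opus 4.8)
The plan is to reduce the point-wise convergence statement for the Dirac basis $\Phi$ to the scalar Fourier convergence result recorded in Lemma~\ref{CT}, using the equiconvergence machinery already in place. The key observation is that Theorem~\ref{EC} (equiconvergence of $S_N$ with $S_N^0$) lets me replace the spectral decomposition of the perturbed operator by that of the free operator $L_{bc}^0$, and the root functions of $L_{bc}^0$ are given \emph{explicitly} in Theorem~\ref{thm01}. So I would first establish the formula (\ref{c2})--(\ref{c3}) for the free partial sums and then invoke equiconvergence. Since $f,g$ are of bounded variation, condition (a) of Theorem~\ref{EC} holds, so $\|(S_N - S_N^0)F\|_\infty \to 0$; hence it suffices to analyze the limit of $S_N^0 F$ alone.

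First I would unwind the free partial sum using the structure of $\Phi$ and $\tilde\Phi$. The crucial algebraic fact is Lemma~\ref{lembc}: writing $A$ for the (point-wise acting, invertible) operator of (\ref{24}) or (\ref{94}) with $A(E)=\Phi$, one has $\varphi_m^\nu = A(e_m^\nu)$ and the biorthogonal system $\tilde\varphi_m^\nu = (A^{-1})^* e_m^\nu$. Therefore the inner products $\langle (f,g)^t, \tilde\varphi_m^\nu\rangle$ are exactly the Fourier coefficients (about the system $\{e^{imx}\}$) of the components of $A^{-1}(f,g)^t$. Because $A$ multiplies components by bounded smooth functions and reflects the variable $x\mapsto \pi-x$ in one slot, the vector-function $A^{-1}(f,g)^t$ again has components of bounded variation, so Lemma~\ref{CT} applies to each scalar component. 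Summing the two diagonalizing pieces back through $A$ gives, for $x\in(0,\pi)$, the averaged limit $\tfrac12[f(x-0)+f(x+0)]$ in each slot, which is (\ref{c03}); the reflection $x\mapsto\pi-x$ is continuity-preserving so it contributes no endpoint subtlety in the interior.

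The endpoint values (\ref{c3}) are where the real work lies, and I expect this to be the main obstacle. At $x=0$ and $x=\pi$ the scalar Fourier series converges not to a one-sided value but to the \emph{periodic} average $\tfrac12[h(0+0)+h(\pi-0)]$ for each component $h$ of $A^{-1}(f,g)^t$; this is the case $x=0,\pi$ of (\ref{c1}). To get (\ref{c3}) I would substitute $x=0$ and $x=\pi$ into the explicit expression $A(A^{-1}(f,g)^t)$, carefully tracking the reflected argument $\pi-x$ in the first component of (\ref{24})/(\ref{94}), and then simplify the resulting linear combinations of boundary values using the defining relations for the eigenvectors $(\alpha_1,\alpha_2)^t$, $(\beta_1,\beta_2)^t$ together with the characteristic relation (\ref{13}) and $z_1 z_2 = bc-ad$, $z_1+z_2 = -(b+c)$. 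The coefficients $b,a,c,d$ and the combination $bc-ad$ appearing in (\ref{c3}) should emerge from these Vieta-type identities. The regular-but-not-strictly-regular case requires the same computation with the associated-function basis (\ref{83})--(\ref{85}) and the operator (\ref{94}), but since $bc-ad\ne 0$ throughout, the formulas remain well defined.

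Finally, for the uniform-convergence assertions I would again transfer through $A$: by Lemma~\ref{CT} the scalar series converge uniformly on closed subintervals of $(0,\pi)$ where the components of $A^{-1}(f,g)^t$ are continuous, which by the continuity of the smooth multipliers in $A$ amounts to $f(\pi-t)$ and $g(t)$ being continuous there. Uniform convergence on all of $[0,\pi]$ holds, by the final clause of Lemma~\ref{CT}, exactly when each scalar component is continuous \emph{and} periodic, i.e. $h(0)=h(\pi)$; via Lemma~\ref{lembc} the periodicity of $A^{-1}(f,g)^t$ is equivalent to $(f,g)^t$ satisfying the boundary condition $bc$, which gives precisely the stated criterion. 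The equiconvergence $\|(S_N-S_N^0)F\|_\infty\to 0$ is uniform on all of $[0,\pi]$, so it transfers every uniformity statement from the free case without loss.
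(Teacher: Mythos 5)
Your proposal is correct and follows essentially the same route as the paper's proof: conjugate by the point-wise acting isomorphism $A$ of Theorem~\ref{thm01} so that the $\Phi$-expansion coefficients become ordinary Fourier coefficients of $\begin{pmatrix} F \\ G \end{pmatrix} = A^{-1}\begin{pmatrix} f \\ g \end{pmatrix}$, apply Lemma~\ref{CT} to $F$ and $G$ (which are again of bounded variation), push the limits back through $A$ to get the interior average (\ref{c03}) and the endpoint formulas (\ref{c3}) via the eigenvector relations and the Vieta identities $z_1+z_2=-(b+c)$, $z_1z_2=bc-ad$ for (\ref{13}), and settle the uniform-convergence criterion through Lemma~\ref{lembc}. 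The only deviation is your opening and closing appeal to Theorem~\ref{EC}, which is superfluous: Theorem~\ref{GCT} is a statement about the free operator's root-function expansion only (no potential $v$ appears in it), and the paper uses equiconvergence not inside this proof but afterwards, to transfer the point-wise convergence result to the perturbed operator $L_{bc}(v)$.
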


\begin{proof}
Let  $A$ and $A^{-1}$ be the operators defined in the proof of
Theorem~\ref{thm01} by (\ref{24}) and (\ref{26}) in case $bc$ is
strictly regular or periodic type, and by (\ref{94}) and (\ref{96})
in case $bc$ is regular but not strictly regular or periodic type.
The operators
$A$  and $A^{-1}$ act on a vector-function $\begin{pmatrix}  f(t)  \\
g (t)\end{pmatrix}$ by multiplying $f(t), \, g(t), \, f(\pi -t), \,
g(\pi -t) $ by some $C^\infty$-functions, so $A$  and $A^{-1}$ are
defined point-wise. Recall from the proof of Theorem~\ref{thm01} that
\begin{equation}
\label{c01} \varphi_k^\nu = A e_k^\nu, \;\; \tilde{\varphi}_k^\nu =
(A^{-1})^*
e_k^\nu, \;\; \text{where}\;\;  e_k^1 =\begin{pmatrix}  e^{ikt}  \\
0\end{pmatrix}, \;\; e_k^2= \begin{pmatrix}  0  \\
e^{ikt} \end{pmatrix}.
\end{equation}

Suppose $f$ and $g$  are functions of bounded variation on $[0,\pi],$
and let
\begin{equation}
\label{c8}
\begin{pmatrix}  F  \\ G \end{pmatrix}(t) := A^{-1}
\begin{pmatrix}  f  \\ g \end{pmatrix}(t);
\end{equation}
then $F$  and $G$ are functions of bounded variation on $[0,\pi]$
also (as products of functions of bounded variations by
$C^\infty$-functions).

In view of (\ref{26}) and
(\ref{96}), the functions $F(t)$  and $G(t)$ are continuous at $t$ if
and only if $f(\pi -t)$ and $g(t) $ are continuous at $t.$
Therefore,\\

(i) if $f(\pi -t) $ and $g(t) $ are continuous on some closed interval $I \subset [0, \pi] $ then $F$ and $G$ are continuous on $I$ as well.\\

Moreover, by Lemma \ref{lembc}, \\

(ii) if $f$ and $g$ are continuous on $[0, \pi]$ and
$\begin{pmatrix}  f  \\ g \end{pmatrix}$  satisfies the boundary conditions (\ref{8a}), then $F$ and $G$ are
continuous on $[0, \pi]$ and
$\begin{pmatrix}  F  \\ G \end{pmatrix}$
satisfies the periodic boundary conditions $F(0) = F(\pi),$
$G(0) = G(\pi).$\\

By (\ref{c01}),
$$ \left \langle
\begin{pmatrix}  f\\g  \end{pmatrix}, \tilde{\varphi}_m^1  \right
\rangle   =\left \langle A^{-1}
\begin{pmatrix}  f\\g  \end{pmatrix},
e_m^1  \right \rangle =\left \langle
\begin{pmatrix}  F\\G  \end{pmatrix},
\begin{pmatrix}  e^{imt} \\ 0  \end{pmatrix}  \right \rangle =
\langle  F,e^{imt} \rangle,
$$
and similarly, $\left \langle
\begin{pmatrix}  f\\g  \end{pmatrix}, \tilde{\varphi}_m^2  \right
\rangle =\langle  G,e^{imt} \rangle.$

By Lemma~\ref{CT},  the Fourier series of $F$ and $G$ with respect to the system $\{e^{imt}, \, m \in 2\mathbb{Z}\}$ converge point-wise,
and  the convergence is uniform  in the cases (i) and (ii) mentioned above.
Let $\tilde{F}(x)$ and $\tilde{G}(x)$
denote, respectively, the point-wise sums of those series at $x \in
[0,\pi].$ Fix a point $x \in [0,\pi]; $ then
$$  \sum_{m=-M}^M
\sum_{\nu=1}^2 \left \langle  \begin{pmatrix}  f\\g
\end{pmatrix}, \tilde{\varphi}_m^\nu  \right \rangle \varphi_m^\nu (x)
 =A \left ( \sum_{-M}^M \left [
\langle  F,e^{imt} \rangle e_m^1  + \langle  G,e^{imt} \rangle e_m^2
\right ] \right )(x) $$
$$ = A \begin{pmatrix}
\sum_{m=-M}^M  \langle  F, e^{imt} \rangle  e^{imt}  \\
\sum_{m=-M}^M  \langle  G, e^{imt} \rangle  e^{imt}
\end{pmatrix} (x) \to    A  \begin{pmatrix}  \tilde{F}  \\
\tilde{G}  \end{pmatrix} (x) \quad \text{as} \quad M \to \infty.
$$ Therefore,  the expansion of $\begin{pmatrix}  f\\g
\end{pmatrix} $ about the basis $\Phi$
 converges point-wise 
to the
vector-function
  \begin{equation}
\label{c10}
  \begin{pmatrix}   \tilde{f} (x) \\ \tilde{g} (x)
\end{pmatrix} := A  \begin{pmatrix}
 \tilde{F}  \\ \tilde{G}    \end{pmatrix} (x).
 \end{equation}
Moreover, the convergence is uniform  
on every closed subinterval  of $[0,\pi] $ on
which both $f(\pi -t)$ and $g(t) $ are  continuous,
and  on the closed interval $[0, \pi]$ 
if $f$ and $g$ are continuous on $[0, \pi]$ and
$\begin{pmatrix}  f  \\ g \end{pmatrix}$  satisfies the boundary conditions (\ref{8a}).

For $x\in (0,\pi),$ Lemma~\ref{CT} implies that $$
\begin{pmatrix}  \tilde{F} (x) \\ \tilde{G} (x)    \end{pmatrix} =
 \begin{pmatrix} \frac{1}{2} [F(x-0)+F(x+0)] \\  \frac{1}{2}
[G(x-0)+G(x+0)] \end{pmatrix}=\frac{1}{2}  \begin{pmatrix}
 F(x-0) \\  G(x-0)\end{pmatrix}+\frac{1}{2}  \begin{pmatrix}
 F(x+0) \\  G(x+0)\end{pmatrix}, $$
 so
 \begin{equation}
\label{c11}
\begin{pmatrix}  \tilde{f} (x) \\ \tilde{f} (x)    \end{pmatrix} =
\frac{1}{2} \,A \begin{pmatrix}
 F(x-0) \\  G(x-0)\end{pmatrix}+\frac{1}{2}\, A  \begin{pmatrix}
 F(x+0) \\  G(x+0)\end{pmatrix}.
 \end{equation}

In case $bc $ is strictly regular or periodic type the operator
$A^{-1}$ is given by (\ref{26}), so  $$\begin{pmatrix}
 F(x-0) \\  G(x-0)\end{pmatrix} = A^{-1} \begin{pmatrix} f \\ g
 \end{pmatrix} (x-0) = \begin{pmatrix} e^{-i \tau_1 x}
 [\alpha_1^\prime f(\pi - x+0) + \alpha_2^\prime g(x-0) ] \\
e^{-i \tau_2 x}
 [\beta_1^\prime f(\pi - x+0) + \beta_2^\prime g(x-0) ]
\end{pmatrix} $$ and  we obtain by  (\ref{24}) and (\ref{20a})
$$ A\begin{pmatrix}
 F(x-0) \\  G(x-0) \end{pmatrix}  =\begin{pmatrix}
 \alpha_1 [\alpha_1^\prime f( x+0) + \alpha_2^\prime g(\pi-x-0) ]
 \\ \alpha_2 [\alpha_1^\prime f(\pi - x+0) + \alpha_2^\prime g(x-0)
 ]\end{pmatrix}
$$ $$ +\begin{pmatrix}
 \beta_1
 [\beta_1^\prime f(x+0) + \beta_2^\prime g(\pi-x-0) ]
 \\ \beta_2 [\beta_1^\prime f(\pi - x+0) + \beta_2^\prime g(x-0) ]
  \end{pmatrix} = \begin{pmatrix} f(x+0) \\  g(x-0)
  \end{pmatrix}.
$$
In an analogous way it follows that $  A\begin{pmatrix}
 F(x+0) \\  G(x+0) \end{pmatrix} (x)=\begin{pmatrix} f(x-0) \\  g(x+0)
  \end{pmatrix}. $
 Therefore, in view of (\ref{c11}), we obtain that (\ref{c03})
 holds for strictly regular or periodic type $bc.$

In case of regular $bc$ which is not strictly regular or periodic
type, (\ref{96}) implies that
$$ \begin{pmatrix}
 F(x-0) \\  G(x-0)\end{pmatrix} =\frac{1}{\Delta} \begin{pmatrix} e^{-i \tau_*
 x}[(\beta_2 + \alpha_2 x)f(\pi-x +0)- (\beta_1 - \pi\alpha_1 +  \alpha_1 x)
 g(x-0)\\
e^{-i \tau_* x}
 [-\alpha_2 f(\pi - x+0) + \alpha_1 g(x-0) ],
\end{pmatrix}, $$
where $\Delta = \alpha_1 \beta_2 - \alpha_2 \beta_1 + \pi \alpha_1
\alpha_2. $ Therefore, by (\ref{94}) it follows that
$$
 A\begin{pmatrix}
 F(x-0) \\  G(x-0)\end{pmatrix} = \frac{1}{\Delta} \begin{pmatrix}
\alpha_1 (\beta_2 + \alpha_2 \pi-\alpha_2 x) f(x+0) - \alpha_1
(\beta_1 - \alpha_1 x) g(\pi -x-0)\\  \alpha_2 (\beta_2 +\alpha_2 x)
f(\pi -x+0) -\alpha_2 (\beta_1 - \pi\alpha_1 +  \alpha_1 x) g(x-0)
 \end{pmatrix}
$$
$$
+ \frac{1}{\Delta} \begin{pmatrix}
(\beta_1 - \alpha_1 x)(- \alpha_2 f(x+0) + \alpha_1  g(\pi -x-0)\\
(\beta_2 +\alpha_2 x)(-\alpha_2 f(\pi -x+0) +\alpha_1 g(x-0)
 \end{pmatrix}
 =\begin{pmatrix}
 f(x+0) \\  g(x-0)\end{pmatrix}.
$$
Similar calculation shows  that $  A\begin{pmatrix}
 F(x+0) \\  G(x+0) \end{pmatrix} (x)=\begin{pmatrix} f(x-0) \\  g(x+0)
  \end{pmatrix}. $
 Therefore,  (\ref{c03})
 holds in case $bc$ is regular but not strictly regular or periodic type.

Next we evaluate $\tilde{f}(0), \tilde{f}(\pi), \tilde{g}(0),
\tilde{g}(\pi).$  For convenience, the calculations  are presented
in a matrix form.

If $bc$ is regular or periodic type, then by (\ref{c10}) and
(\ref{24})
\begin{equation}
 \label{cc1}
\begin{pmatrix} \tilde{f}(0) \\ \tilde{f}(\pi) \\ \tilde{g}(0) \\
\tilde{g}(\pi)     \end{pmatrix}= \left [
\begin{array}{cccc}
0 & \alpha_1 z_1 & 0 & \beta_1 z_2 \\ \alpha_1 & 0& \beta_1 & 0
\\ \alpha_2 & 0& \beta_2 & 0 \\ 0 & \alpha_2 z_1 & 0 & \beta_2 z_2
\end{array}
\right ] \begin{pmatrix} \tilde{F}(0) \\ \tilde{F}(\pi) \\
\tilde{G}(0) \\ \tilde{G}(\pi)     \end{pmatrix},
\end{equation}
where $z_1 = e^{i\tau_1 \pi}$ and  $z_2 = e^{i\tau_2 \pi}$  are the
roots of (\ref{13}) (in case $bc$ is strictly regular $z_1 \neq z_2;$
if $bc$ is periodic type then $z_1 = z_2 = z_*, $ and $\tau_1 =
\tau_2 = \tau_*$).

 In view of Lemma \ref{CT}, we have
 \begin{equation}
 \label{cc2}
\begin{pmatrix} \tilde{F}(0) \\ \tilde{F}(\pi) \\
\tilde{G}(0) \\ \tilde{G}(\pi)     \end{pmatrix}= \left [
\begin{array}{cccc}
1/2 & 1/2 & 0 & 0 \\ 1/2 & 1/2 & 0 & 0
\\ 0 & 0& 1/2 & 1/2 \\ 0 & 0& 1/2 & 1/2
\end{array}
\right ] \begin{pmatrix} F(0) \\ F(\pi) \\ G(0) \\ G(\pi)
\end{pmatrix}.
\end{equation}
On the other hand, by (\ref{c8}) and (\ref{26}) it follows that
\begin{equation}
 \label{cc3}
\begin{pmatrix} F(0) \\ F(\pi) \\ G(0) \\ G(\pi)
\end{pmatrix}=\left [
\begin{array}{cccc}
0 & \alpha_1^\prime & \alpha_2^\prime & 0 \\  \alpha_1^\prime/z_1
& 0&  0& \alpha_2^\prime/z_1
\\ 0 & \beta_1^\prime & \beta_2^\prime & 0 \\  \beta_1^\prime/z_2
& 0&  0& \beta_2^\prime/z_2
\end{array}
\right ] \begin{pmatrix} f(0) \\ f(\pi) \\ g(0) \\ g(\pi)
\end{pmatrix}.
\end{equation}
Now (\ref{cc1})--(\ref{cc3})  imply
\begin{equation}
\label{c101}
\begin{pmatrix} \tilde{f}(0) \\ \tilde{f}(\pi) \\ \tilde{g}(0) \\
\tilde{g}(\pi)     \end{pmatrix}= \frac{1}{2}\mathcal{M}
\begin{pmatrix} f(0)
\\ f(\pi)
\\ g(0) \\ g(\pi)
\end{pmatrix},
\end{equation}
where
 \begin{equation}
\label{c102}
 \mathcal{M}=  \left [
\begin{array}{cccc}
\alpha_1 \alpha_1^\prime +\beta_1 \beta_1^\prime & \alpha_1
\alpha_1^\prime z_1 +\beta_1 \beta_1^\prime z_2 &  \alpha_1
\alpha_2^\prime z_1 +\beta_1 \beta_2^\prime z_2 & \alpha_1
\alpha_2^\prime  +\beta_1 \beta_2^\prime \\
 \frac{\alpha_1 \alpha_1^\prime}{z_1} +\frac{\beta_1 \beta_1^\prime}{z_2} &
 \alpha_1 \alpha_1^\prime +\beta_1 \beta_1^\prime & \alpha_1
\alpha_2^\prime  +\beta_1 \beta_2^\prime & \frac{\alpha_1
\alpha_2^\prime}{z_1}  +\frac{\beta_1 \beta_2^\prime}{z_2}
\\
 \frac{\alpha_2 \alpha_1^\prime}{z_1} +\frac{\beta_2 \beta_1^\prime}{z_2} &
 \alpha_2 \alpha_1^\prime +\beta_2 \beta_1^\prime & \alpha_2
\alpha_2^\prime  +\beta_2 \beta_2^\prime & \frac{\alpha_2
\alpha_2^\prime}{z_1}  +\frac{\beta_2 \beta_2^\prime}{z_2}
\\
\alpha_2 \alpha_1^\prime +\beta_2 \beta_1^\prime & \alpha_2
\alpha_1^\prime z_1 +\beta_2 \beta_1^\prime z_2 &  \alpha_2
\alpha_2^\prime z_1 +\beta_2 \beta_2^\prime z_2 & \alpha_2
\alpha_2^\prime  +\beta_2 \beta_2^\prime
\end{array}
\right ].
\end{equation}
Next we evaluate the entries of the matrix
$\mathcal{M}=(\mathcal{M}_{ij}).$ In view of (\ref{20a}) we have
$$
\alpha_1 \alpha_1^\prime +\beta_1 \beta_1^\prime =1,
\quad \alpha_2 \alpha_2^\prime +\beta_2 \beta_2^\prime =1, \quad
\alpha_1 \alpha_2^\prime +\beta_1 \beta_2^\prime=0,
\quad  \alpha_2 \alpha_1^\prime +\beta_2 \beta_1^\prime=0,
$$
so $\mathcal{M}_{ii}=1, \; i=1,2,3,4,$ and
$\mathcal{M}_{14}=\mathcal{M}_{23}=\mathcal{M}_{32}=\mathcal{M}_{41}=0.$
In order to find the remaining elements of $\mathcal{M}$ recall that
$\begin{pmatrix}  \alpha_1 \\ \alpha_2  \end{pmatrix}$ and
$\begin{pmatrix}  \beta_1 \\ \beta_2  \end{pmatrix}$ are eigenvectors
of the matrix $  \left [\begin{array}{cc}  b&a \\d  &c
\end{array}  \right ]$ which correspond to its eigenvalues $- z_1$
and $-z_2 $ (see the text between (\ref{13}) and (\ref{20a})).
Therefore, we have $$ b\alpha_1 +a \alpha_2 =-z_1 \alpha_1, \quad
 d\alpha_1 +c \alpha_2 =-z_1 \alpha_2, \quad
$$
$$ b\beta_1 +a \beta_2 =-z_2 \beta_1, \quad
 d\beta_1 +c \beta_2 =-z_2 \beta_2.
$$
In addition, (\ref{13}) implies that
$$
z_1 + z_2 = - (b+c), \quad z_1 z_2 = bc-ad.
$$
Using the above formulas we obtain
$$
\mathcal{M}_{12} =\alpha_1 \alpha_1^\prime z_1 +\beta_1
\beta_1^\prime z_2 = - \alpha_1^\prime  (b\alpha_1 + a \alpha_2) -
\beta_1^\prime (b \beta_1 + a \beta_2) $$
$$
= -b (\alpha_1\alpha_1^\prime + \beta_1 \beta_1^\prime) - a(\alpha_2
\alpha_1^\prime + \beta_2 \beta_1^\prime ) = -b;
$$
$$
\mathcal{M}_{13} = \alpha_1\alpha_2^\prime z_1    + \beta_1
\beta_2^\prime z_2 = - \alpha_2^\prime  (b\alpha_1 + a \alpha_2) -
\beta_2^\prime (b \beta_1 + a \beta_2) $$
$$
= -b (\alpha_1\alpha_2^\prime + \beta_1 \beta_2^\prime) - a(\alpha_2
\alpha_2^\prime + \beta_2 \beta_2^\prime ) = -a;
$$
$$
\mathcal{M}_{21}= \frac{\alpha_1 \alpha_1^\prime}{z_1} +\frac{\beta_1
\beta_1^\prime}{z_2} = \frac{1}{z_1 z_2} (\alpha_1 \alpha_1^\prime
z_2 +\beta_1 \beta_1^\prime z_1) $$
$$
=\frac{1}{bc-ad} \,\left [\alpha_1 \alpha_1^\prime (-b-c -z_1)
+\beta_1 \beta_1^\prime (-b-c-z_2) \right ]
$$
$$
=\frac{1}{bc-ad} \, \left [-(b+c)(\alpha_1 \alpha_1^\prime+ \beta_1
\beta_1^\prime ) - \mathcal{M}_{12} \right ] =\frac{-c}{bc-ad};
$$
$$
\mathcal{M}_{24} = \frac{\alpha_1 \alpha_2^\prime}{z_1}
+\frac{\beta_1 \beta_2^\prime}{z_2} = \frac{1}{z_1 z_2} (\alpha_1
\alpha_2^\prime z_2 +\beta_1 \beta_2^\prime z_1)
$$
$$
=\frac{1}{bc-ad} \,\left [\alpha_1 \alpha_2^\prime (-b-c -z_1)
+\beta_1 \beta_2^\prime (-b-c-z_2) \right ]
$$
$$
=\frac{1}{bc-ad} \, \left [-(b+c)(\alpha_1 \alpha_2^\prime+ \beta_1
\beta_2^\prime ) - \mathcal{M}_{13} \right ] = \frac{a}{bc-ad}.
$$

In an analogous way one can find
$\mathcal{M}_{31},  \mathcal{M}_{34},  \mathcal{M}_{42}$  and $\mathcal{M}_{43};$
we omit the details   and give the final result:
\begin{equation}
\label{c17}
 \mathcal{M}= \frac{1}{2}\left [
\begin{array}{cccc}
1 &  -b  &  -a  &  0 \\  \frac{-c}{bc-ad} & 1 &  0  & \frac{a}{bc-ad}\\
\frac{d}{bc-ad}  &  0  &  1  &  \frac{-b}{bc-ad}  \\ 0  &  -d  &  -c  &  1
  \end{array}  \right ].
\end{equation}
Hence, (\ref{c3}) holds if $bc$ is strictly regular or periodic type.

 In the case $bc $ is regular but not strictly regular or periodic type we
 use the same argument but work with the operators $A$  and $A^{-1} $
 defined by (\ref{94}) and (\ref{96}).
By (\ref{c10}) and (\ref{94})
\begin{equation}
 \label{cc11}
\begin{pmatrix} \tilde{f}(0) \\ \tilde{f}(\pi) \\ \tilde{g}(0) \\
\tilde{g}(\pi)     \end{pmatrix}= \left [
\begin{array}{cccc}
0 & \alpha_1 z_* & 0 & \beta_1 z_* \\ \alpha_1 & 0& \beta_1- \alpha_1
\pi & 0
\\ \alpha_2 & 0 & \beta_2 & 0 \\ 0 & \alpha_2 z_* & 0 & (\beta_2 + \alpha_2 \pi)
z_*
\end{array}
\right ] \begin{pmatrix} \tilde{F}(0) \\ \tilde{F}(\pi) \\
\tilde{G}(0) \\ \tilde{G}(\pi)     \end{pmatrix},
\end{equation}
where $\Delta = \alpha_1 \beta_2 - \alpha_2 \beta_1 + \pi \alpha_1
\alpha_2$ and $z^* = e^{i\pi \tau_*}.$

On the other hand, by (\ref{c8}) and (\ref{96}) it follows that
\begin{equation}
 \label{cc31}
\begin{pmatrix} F(0) \\ F(\pi) \\ G(0) \\ G(\pi)
\end{pmatrix}=\frac{1}{\Delta} \left [
\begin{array}{cccc}
0 & \beta_2 & -\beta_1 +\pi \alpha_1 & 0 \\  \frac{\beta_2 + \pi
\alpha_2}{z_*} & 0& 0& \frac{-\beta_1}{z_*}
\\ 0 & -\alpha_2 & \alpha_1 & 0 \\  \frac{-\alpha_2}{z_*}
& 0&  0& \frac{\alpha_1}{z^*}
\end{array}
\right ] \begin{pmatrix} f(0) \\ f(\pi) \\ g(0) \\ g(\pi)
\end{pmatrix}.
\end{equation}
Now (\ref{cc11}), (\ref{cc2}) and (\ref{cc31})  imply that
(\ref{c101}) holds with
 \begin{equation}
\label{c104}
 \mathcal{M}= \frac{1}{\Delta} \left [
\begin{array}{cccc}
\Delta & (\alpha_1 \beta_2 - \beta_1 \alpha_2)z_* & \pi \alpha_1^2 z_*
 & 0 \vspace{1mm}\\
 \frac{\Delta + \pi \alpha_1 \alpha_2 }{z_*}  &
\Delta  & 0 & -\frac{\pi \alpha_1^2}{z_*} \vspace{1mm}
\\
\frac{\pi \alpha_2^2}{z_*} &0
 &\Delta  & \frac{\alpha_1 \beta_2 - \beta_1 \alpha_2}{z_*} \vspace{1mm}
\\
0 & -\pi \alpha_2^2 z_* &  (\Delta + \pi \alpha_1 \alpha_2 )z_* & \Delta
\end{array}
\right ].
\end{equation}
The parameters $\alpha_1, \alpha_2, \beta_1, \beta_2 $  come from
(\ref{80}), where we consider three cases: (i) $ \; a=0; \;$ (ii) $\;
d=0;\; $ (iii) $\; a\neq 0, \, b\neq 0.$

In case (iii), we have
$$
\alpha_1 = a, \;\; \alpha_2 = (c-b)/2, \;\; \beta_1 = 0, \;\; \beta_2
= \pi b.
$$
Recall also that $ z_* = -\frac{b+c}{2} $ and $\; z_*^2 =bc - ad \;$
because $z_* $ is a double root of (\ref{13}). Therefore,
$$\Delta= \alpha_1 \beta_2 - \alpha_2 \beta_1 + \pi \alpha_1
\alpha_2= \pi ab +\pi a \frac{c-b}{2} = \pi a \frac{b+c}{2} = - \pi a
z_*.
$$
Next we evaluate the entries of $\mathcal{M} $ in case (iii):
$$
\mathcal{M}_{12} = \frac{1}{\Delta} (\alpha_1 \beta_2 - \alpha_2
\beta_1) z_* = \frac{1}{-\pi a z_*}  \pi ab z_* = -b;
$$
$$
\mathcal{M}_{21} =  \frac{\Delta + \pi \alpha_1 \alpha_2}{\Delta z_*}
= \frac{\pi a \frac{b+c}{2} + \pi a \frac{c-b}{2}}{-\pi a z_*^2}=
\frac{-c}{bc-ad};
$$
$$
\mathcal{M}_{13} =  \frac{\pi \alpha_1^2 z_*}{\Delta} = \frac{\pi a^2
z_*}{-\pi a z_*} = -a;\quad \mathcal{M}_{24} =  -\frac{\pi \alpha_1^2
}{\Delta z_*} = \frac{-\pi a^2 }{-\pi a z_*^2} = \frac{a}{bc-ad}.
$$
In a similar way we calculate $\mathcal{M}_{31}, \mathcal{M}_{34},
\mathcal{M}_{42}, \mathcal{M}_{43} $ and obtain that in case (iii)
the matrix $\mathcal{M}$ is given by (\ref{c17}).

An elementary calculation (which is omitted) shows that in cases (i)
and (ii) the matrix $\mathcal{M}$ is given by (\ref{c17}) also, so
(\ref{c3}) holds if $bc$ is regular but not strictly regular or
periodic type as well. This completes the proof.
\end{proof}

\section{Generalizations}

\subsection{Weighted eigenvalue problems, general potential matrices}

Suppose   $\rho \in L^1 ([x_1, x_2]) $ and $\rho (x) \geq const
>0. $ Let $L^2 ([x_1,x_2], \rho) $ be the space of all
measurable functions $f: [x_1, x_2] \to \mathbb{C}$  such that $$
\|f\|_\rho^2 = \int_{x_1}^{x_2} |f(x)|^2 \rho (x) dx < \infty. $$
Suppose that $$ T=
 \begin{pmatrix}  T_{11}  &  T_{12}  \\   T_{21}  & T_{22}
\end{pmatrix}, \quad \frac{1}{\rho}T_{ij} \in  L^2 ([x_1,x_2], \rho).$$
Consider the operator
\begin{equation}
\label{g.1} L_{bc} (T, \rho )y :=  \frac{1}{\rho (x)} \left [ i
\begin{pmatrix} 1 & 0
\\ 0 & -1
\end{pmatrix}   \frac{dy}{dx}  + Ty \right ],
\quad y=\begin{pmatrix}y_1 \\ y_2
\end{pmatrix},
\end{equation}
 subject to the boundary conditions $bc$
\begin{eqnarray}
\label{88g}  y_1 (x_1) +b \, y_1 (x_2) + a \, y_2 (x_1) =0,
\\ \nonumber   d \, y_1 (x_2) + c \, y_2 (x_1) +  y_2 (x_2)=0,
\end{eqnarray}
in the domain $Dom \,L_{bc} (T, \rho ) \subset \left ( L^2
([x_1,x_2] , \rho)\right )^2 $ which consists of all absolutely
continuous functions $y$ such that (\ref{88g}) holds and
$y^\prime_{1}/\rho, y^\prime_{2}/\rho \in  L^2 ([x_1,x_2], \rho).$
 It is easy to
see that $L_{bc} (T, \rho )$ is a densely defined closed operator.
A standard computation of the adjoint operators leads to the
following.

\begin{Lemma}
\label{lemadj} In the above notations,
\begin{equation}
\label{adj1} (L_{bc} (T, \rho ))^* = L_{\widetilde{bc}} (T^*, \rho),
\quad \text{where} \quad T^* =\begin{pmatrix}  \overline{T}_{11}  &
\overline{T}_{21}
\\   \overline{T}_{12} & \overline{T}_{22}
\end{pmatrix}
\end{equation}
and the boundary conditions $\widetilde{bc}$ are defined by
\begin{equation}
\label{adj2} \overline{b} y_1 (x_1) + y_1 (x_2) +\overline{d} y_2
(x_2) = 0, \quad  \overline{a}y_1 (x_1) + y_2 (x_1) + \overline{c}
y_2 (x_2)=0.
\end{equation}
\end{Lemma}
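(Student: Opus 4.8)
The plan is to compute the sesquilinear form $\langle L_{bc}(T,\rho)y, z\rangle_\rho$ directly and read off both the formal adjoint and the adjoint boundary conditions from it. First I would take the scalar product on $\left(L^2([x_1,x_2],\rho)\right)^2$ to be $\langle f,g\rangle_\rho = \int_{x_1}^{x_2}(f_1\overline{g_1}+f_2\overline{g_2})\,\rho\,dx$ and note the crucial cancellation: since $L_{bc}(T,\rho)y$ in (\ref{g.1}) carries a factor $\rho^{-1}$ while the inner product carries a factor $\rho$, the weight drops out completely, leaving the plain Lebesgue integral
\[
\langle L_{bc}(T,\rho)y, z\rangle_\rho = \int_{x_1}^{x_2}\left[(iy_1' + T_{11}y_1 + T_{12}y_2)\overline{z_1} + (-iy_2' + T_{21}y_1 + T_{22}y_2)\overline{z_2}\right]dx.
\]
This is the key simplification that makes the weighted case no harder than the unweighted one, and it is the reason the same $\rho$ reappears on the adjoint side in (\ref{adj1}).

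Next I would integrate by parts the two first-order terms. Using $\int iy_1'\overline{z_1}\,dx = i[y_1\overline{z_1}]_{x_1}^{x_2} + \int y_1\,\overline{iz_1'}\,dx$ and $\int (-i)y_2'\overline{z_2}\,dx = -i[y_2\overline{z_2}]_{x_1}^{x_2} + \int y_2\,\overline{(-iz_2')}\,dx$, together with the identity $T_{ij}\,y\,\overline{z}=y\,\overline{\overline{T_{ij}}\,z}$ for the zeroth-order terms, the bulk integral reorganizes into $\int_{x_1}^{x_2}(y_1\overline{w_1}+y_2\overline{w_2})\,dx$ with $w_1 = iz_1' + \overline{T_{11}}z_1 + \overline{T_{21}}z_2$ and $w_2 = -iz_2' + \overline{T_{12}}z_1 + \overline{T_{22}}z_2$. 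Reinserting the weight as $w_j=\rho\cdot(\rho^{-1}w_j)$ identifies this bulk term with $\langle y, L(T^*,\rho)z\rangle_\rho$, where $T^*$ is precisely the matrix displayed in (\ref{adj1}); thus the formal adjoint is $L(T^*,\rho)$. The remaining boundary contribution is $i\bigl[y_1\overline{z_1}-y_2\overline{z_2}\bigr]_{x_1}^{x_2}$.

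To pin down $\widetilde{bc}$ I would impose that this boundary form vanish for every $y\in Dom\,L_{bc}(T,\rho)$. Solving (\ref{88g}) gives $y_1(x_1)=-b\,y_1(x_2)-a\,y_2(x_1)$ and $y_2(x_2)=-d\,y_1(x_2)-c\,y_2(x_1)$, so the four boundary values of $y$ are parametrized freely by $y_1(x_2)$ and $y_2(x_1)$. Substituting these and collecting the coefficients of the two free parameters, the boundary form vanishes identically exactly when $\overline{z_1(x_2)}+b\,\overline{z_1(x_1)}+d\,\overline{z_2(x_2)}=0$ and $a\,\overline{z_1(x_1)}+c\,\overline{z_2(x_2)}+\overline{z_2(x_1)}=0$; taking complex conjugates turns these into exactly the conditions (\ref{adj2}).

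The genuinely delicate point, and the step I would treat most carefully, is the domain characterization: to conclude $(L_{bc}(T,\rho))^*=L_{\widetilde{bc}}(T^*,\rho)$ one needs both inclusions. The inclusion $\supseteq$ is immediate from the computation above. For $\subseteq$ one must show that any $z$ in the abstract adjoint domain is automatically absolutely continuous with $z_j'/\rho\in L^2([x_1,x_2],\rho)$ and satisfies $\widetilde{bc}$. This is the standard regularity argument for first-order differential operators: testing first against $y$ supported in $(x_1,x_2)$ forces the weak-derivative identity and hence the regularity of $z$, after which testing against general $y\in Dom\,L_{bc}(T,\rho)$ extracts the boundary relations. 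Since (\ref{88g}) is a regular two-point boundary condition and the $\rho^{-1}$ factor does not affect this reasoning, I would invoke the standard argument rather than reproduce it in full.
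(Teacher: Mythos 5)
Your proposal is correct and is exactly the ``standard computation of the adjoint operators'' that the paper invokes without writing out: the cancellation of the weight $\rho$ between the operator and the inner product, integration by parts producing the conjugate-transpose matrix $T^*$ and the boundary form $i\bigl[y_1\overline{z_1}-y_2\overline{z_2}\bigr]_{x_1}^{x_2}$, elimination of the boundary values via (\ref{88g}) to obtain (\ref{adj2}), and the standard regularity/testing argument for the nontrivial domain inclusion. Nothing further is needed.
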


The boundary conditions (\ref{adj2}) are not written in the standard
form (\ref{88g}) but a multiplication of the system of equations
(\ref{adj2}) from the left by the inverse matrix
$$
\begin{pmatrix}
\overline{b} & \overline{d}  \\ \overline{a} & \overline{c}
\end{pmatrix}^{-1} =
\begin{pmatrix}
\overline{c}/\overline{\Delta}
 & -\overline{d}/\overline{\Delta}  \\ -\overline{a}/\overline{\Delta}
 & \overline{b}/\overline{\Delta}
\end{pmatrix}, \quad
\Delta = bc - ad,
$$
would bring the boundary conditions $\widetilde{bc}$ to the standard
form. This observation leads to the following

\begin{Corollary}
\label{cor22}
The operator $L_{bc} (T, \rho )$ is self-adjoint if
and only if
$$
b=\overline{c}/\overline{\Delta}, \quad
a=-\overline{d}/\overline{\Delta}, \quad
d=-\overline{a}/\overline{\Delta}, \quad
c=\overline{b}/\overline{\Delta}, \quad  T= T^*.
$$
\end{Corollary}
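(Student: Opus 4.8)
The plan is to read off self-adjointness directly from Lemma~\ref{lemadj}. By definition $L_{bc}(T,\rho)$ is self-adjoint precisely when it equals its own adjoint, and Lemma~\ref{lemadj} identifies that adjoint as $L_{\widetilde{bc}}(T^*,\rho)$. Thus the condition to analyze is the operator identity $L_{bc}(T,\rho)=L_{\widetilde{bc}}(T^*,\rho)$, an equality of two operators of the \emph{same} type with the \emph{same} weight $\rho$. The whole task therefore reduces to deciding when two such operators coincide, which splits into matching their differential expressions and matching their domains.

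First I would argue that these two requirements decouple. Since $\rho$ is common to both operators, the differential expressions agree iff the potential matrices agree; testing on functions in $C_0^\infty((x_1,x_2),\mathbb{C}^2)$ (which vanish near the endpoints, hence lie in the domain of every regular $bc$ and satisfy any boundary condition) recovers the potential matrix from the action of the operator, forcing $T=T^*$. The domains then agree iff the boundary conditions $bc$ and $\widetilde{bc}$ cut out the same two-dimensional constraint on the boundary vector $(y_1(x_1),y_1(x_2),y_2(x_1),y_2(x_2))$. Here the key structural fact is that for a \emph{regular} $bc$ the reduction to the standard form (\ref{88g}) is unique: because $|A_{14}|\neq 0$ (see (\ref{6})), the normalizing left-multiplication that produces (\ref{8a}) is uniquely determined, so a regular boundary condition is faithfully parametrized by the quadruple $(a,b,c,d)$. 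Consequently $bc=\widetilde{bc}$ is equivalent to equality of the corresponding standard-form parameters.

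It remains to bring $\widetilde{bc}$, given by (\ref{adj2}), into standard form and compare. As already noted before the statement, left-multiplying the system (\ref{adj2}) by the inverse $\begin{pmatrix}\overline{b}&\overline{d}\\\overline{a}&\overline{c}\end{pmatrix}^{-1}=\begin{pmatrix}\overline{c}/\overline{\Delta}&-\overline{d}/\overline{\Delta}\\-\overline{a}/\overline{\Delta}&\overline{b}/\overline{\Delta}\end{pmatrix}$ normalizes the coefficients of $y_1(x_1)$ and $y_2(x_2)$ to the identity pattern of (\ref{88g}). Carrying out that $2\times 4$ matrix product (routine and essentially displayed already) yields standard-form parameters $\tilde{b}=\overline{c}/\overline{\Delta}$, $\tilde{a}=-\overline{d}/\overline{\Delta}$, $\tilde{d}=-\overline{a}/\overline{\Delta}$, $\tilde{c}=\overline{b}/\overline{\Delta}$, where $\Delta=bc-ad$. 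Imposing $bc=\widetilde{bc}$ now means $b=\tilde b$, $a=\tilde a$, $d=\tilde d$, $c=\tilde c$, which is exactly the claimed system; together with $T=T^*$ from the previous step this establishes the Corollary. The one point requiring genuine care is the uniqueness of the standard-form parametrization used to separate the boundary-condition equality from the potential equality; once that is in place, the rest is the bookkeeping of the matrix multiplication.
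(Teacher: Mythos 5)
Your proposal is correct and follows essentially the same route as the paper: the paper's argument is precisely the observation, stated just before the Corollary, that left-multiplying the adjoint boundary conditions (\ref{adj2}) by $\begin{pmatrix}\overline{b}&\overline{d}\\\overline{a}&\overline{c}\end{pmatrix}^{-1}$ brings them to the standard form (\ref{88g}), after which self-adjointness is read off as equality of the standard-form parameters together with $T=T^*$. Your additional care (recovering $T=T^*$ by testing on compactly supported functions, and the uniqueness of the standard-form parametrization coming from $|A_{14}|\neq 0$) only makes explicit what the paper leaves implicit.
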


An appropriate change of the variable transforms the operator
$L_{bc}(T,\rho) $ into an operator acting in $\left (L^2
([0,\pi])\right )^2. $ Indeed, let
\begin{equation}
\label{g.2}
t(x) = K \int_{x_1}^x  \rho (\xi) d\xi, \quad      x_1 \leq x \leq x_2,
\end{equation}
where the constant $K>0 $ is chosen so that
$t(x_2)=K \int_{x_1}^{x_2}
\rho (\xi) d\xi=\pi,$ and let $x(t) :   \; [0,\pi]  \to [x_1,x_2]
$ be the inverse function of $t(x).$ The change of variable
$x=x(t) $ give rise of an isomorphism $$ W: L^2 ([x_1,x_2], \rho)
\to L^2 ([0,\pi]), \quad (Wf)(t)= f(x(t))$$ because $\int_0^\pi
|f(x(t))|^2 dt= K\cdot \int_{x_1}^{x_2} |f(x)|^2 \rho (x) dx.$ Of
course, the operator  $$ W^{(2)} : \left( L^2 ([x_1,x_2] ,
\rho)\right )^2 \to \left ( L^2 ([0,\pi] )\right )^2, \quad
W^{(2)} \begin{pmatrix} f \\ g  \end{pmatrix}
=
\begin{pmatrix} Wf \\ Wg  \end{pmatrix} $$
is also an isomorphism.

Consider the operator
\begin{equation}
\label{g.12}
 L_{bc} (S)u :=  i \begin{pmatrix}  1  &  0
\\ 0 & -1
\end{pmatrix}   \frac{du}{dt}  + S u,  \quad u= \begin{pmatrix}
u_1  \\u_2   \end{pmatrix}
\end{equation}
with $$  S=
\begin{pmatrix}  S_{11}  &  S_{12}  \\   S_{21}  & S_{22}
\end{pmatrix}, \quad  S_{ij}(t) = \frac{1}
{K\rho (x(t))} T_{ij}(x(t)), \quad i,j \in \{1,2\}, $$ subject to
the boundary conditions $bc$
 \begin{eqnarray}
\label{8.g}  u_1 (0) +b u_1 (\pi) + a u_2 (0) =0,
\\ \nonumber d u_1 (\pi) + c u_2 (0) +  u_2
(\pi)=0,
\end{eqnarray}
in the domain $ D(L_{bc}(S)) \subset \left (L^2 ([0,\pi]) \right)^2$
which consists of all absolutely continuous functions $u$ such that
(\ref{8.g}) holds and $u^\prime_{1}, u^\prime_{1} \in L^2
([0,\pi]).$

\begin{Lemma}
\label{lemg1} The operators $L_{bc}(T, \rho)
$ and $K \cdot L_{bc}(S)$  are similar.
\end{Lemma}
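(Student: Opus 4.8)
The plan is to show that the change-of-variable isomorphism $W^{(2)}$ intertwines the two operators, that is, $W^{(2)} L_{bc}(T,\rho)\,(W^{(2)})^{-1} = K\,L_{bc}(S)$, which is exactly the assertion of similarity. First I would take a function $y$ in $\mathrm{Dom}\,L_{bc}(T,\rho)$ and set $u = W^{(2)} y$, so that $u_j(t) = y_j(x(t))$ for $j=1,2$. Since $t(x) = K\int_{x_1}^x \rho(\xi)\,d\xi$ by (\ref{g.2}), we have $dt/dx = K\rho(x)$, hence $dx/dt = 1/(K\rho(x(t)))$, and the chain rule gives $du_j/dt = \frac{1}{K\rho(x(t))}\,(dy_j/dx)\big|_{x=x(t)}$. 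Substituting this together with the definition $S_{ij}(t) = \frac{1}{K\rho(x(t))}T_{ij}(x(t))$ into the expression (\ref{g.12}) for $K\,L_{bc}(S)u$, each component collapses: the common factor $1/(K\rho)$ combines with the outer $K$ to produce $\frac{1}{\rho(x(t))}\bigl[\,i(\pm)\,dy_j/dx + (Ty)_j\,\bigr]\big|_{x=x(t)}$, where the diagonal sign matrix is carried through unchanged. This is precisely $(L_{bc}(T,\rho)y)_j(x(t)) = \bigl(W^{(2)} L_{bc}(T,\rho)y\bigr)_j(t)$, which establishes the operator identity on the common domain.

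Next I would verify that $W^{(2)}$ maps $\mathrm{Dom}\,L_{bc}(T,\rho)$ onto $\mathrm{Dom}\,L_{bc}(S)$. Absolute continuity of each $y_j$ is equivalent to absolute continuity of $u_j$, because $t(x)$ and its inverse $x(t)$ are absolutely continuous and strictly monotone (here $\rho\geq \mathrm{const}>0$ is essential). The boundary conditions match automatically, since $u_j(0) = y_j(x_1)$ and $u_j(\pi) = y_j(x_2)$, so (\ref{8.g}) holds for $u$ exactly when (\ref{88g}) holds for $y$, the coefficients $a,b,c,d$ being identical in both systems. For the $L^2$-regularity of the derivatives, the substitution $dt = K\rho\,dx$ yields
\[
\int_0^\pi |u_j'(t)|^2\,dt = \frac{1}{K}\int_{x_1}^{x_2} \frac{|y_j'(x)|^2}{\rho(x)}\,dx,
\]
so $u_j' \in L^2([0,\pi])$ if and only if $y_j'/\rho \in L^2([x_1,x_2],\rho)$, which is exactly the domain condition imposed on $L_{bc}(T,\rho)$.

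I expect no serious obstacle: the content is a routine chain-rule computation together with the bookkeeping for domains, and the whole calculation is driven by the single Jacobian identity $dt/dx = K\rho$. The only point that demands a little care is the domain correspondence, namely confirming that absolute continuity transfers under $x(t)$ and that the weighted derivative condition converts correctly; both follow directly from the explicit form of the change of variable. Having intertwined the two operators on matching domains through the isomorphism $W^{(2)}$, the similarity $L_{bc}(T,\rho) = (W^{(2)})^{-1}\,\bigl(K\,L_{bc}(S)\bigr)\,W^{(2)}$ follows at once.
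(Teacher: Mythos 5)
Your proof is correct and follows essentially the same route as the paper: both intertwine the operators via the change-of-variable isomorphism $W^{(2)}$, using the Jacobian identity $dt/dx = K\rho$ and the chain rule, and then check that boundary conditions and domains correspond. Your write-up is simply more explicit than the paper's (which asserts the domain and boundary-condition correspondence without the detailed verification you supply).
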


\begin{proof}
Change the variables in (\ref{g.1}), (\ref{88g}) by
\begin{equation}
\label{g.3} x= x(t), \quad u(t) = y(x(t)) =\begin{pmatrix}  y_1
(x(t))  \\ y_2(x(t)) \end{pmatrix} , \quad 0 \leq t \leq \pi.
\end{equation}
  Then $
 u^\prime (t) = y^\prime (x(t))\cdot x^\prime (t)
=y^\prime (x(t)) \frac{1}{K\rho (x(t))},$  the boundary conditions
(\ref{88g}) transform into (\ref{8.g}), the domain
$D(L_{bc}(T,\rho))$ transforms into $D(L_{bc}(S)),$  so the operator
$L(T_{bc}, \rho) $ transforms into the operator $K \cdot L_{bc}(S).$
In other words, we obtain that $$ W^{(2)} L(T_{bc},\rho) =K\cdot
L_{bc}(S)\, W^{(2)}, $$ which completes the proof.

\end{proof}

Set
\begin{equation}
\label{g1} s_{1} (t) = \int_{0}^t   S_{11} (\tau) d \tau, \quad
s_{2} (t) = \int_{0}^t   S_{22} (\tau) d \tau, \quad 0 \leq t \leq
\pi.
\end{equation}

\begin{Proposition}
\label{propT}  In the above notations,
the Dirac operator
\begin{equation}
\label{g2} L_{bc} (S)u = i \begin{pmatrix}  1  &  0  \\   0  & -1
\end{pmatrix}   \frac{du}{dt} +
\begin{pmatrix}  S_{11}  &  S_{12}  \\  S_{21}  & S_{22}
\end{pmatrix}u, \quad S_{ij} \in L^2 ([0,\pi]),
\end{equation}
subject to the boundary conditions $bc$
\begin{eqnarray}
\label{8g}  u_1 (0) +b u_1 (\pi) + a u_2 (0) =0,
\\ \nonumber d u_1 (\pi) + c u_2 (0) +  u_2 (\pi)=0,
\end{eqnarray}
is similar to the Dirac operator
\begin{equation}
\label{g3} L_{\widetilde{bc}} (v)\tilde{u} =
L^0\tilde{u}+v\tilde{u}, \quad v=
\begin{pmatrix} 0  & S_{12}  e^{-i(s_{1}(t) +s_{2} (t))} \\
S_{21} e^{i(s_{1}(t) +s_{2} (t))} & 0  \end{pmatrix},
\end{equation}
subject to the boundary conditions $\widetilde{bc}$
\begin{eqnarray}
\label{g8}  \tilde{u}_1 (0) +\tilde{b} \tilde{u}_1 (\pi) +
\tilde{a} \tilde{u}_2 (0) =0,
\\ \nonumber \tilde{d} \tilde{u}_1 (\pi) +
\tilde{c} \tilde{u}_2 (0) +  \tilde{u}_2 (\pi)=0,
\end{eqnarray}
where
\begin{equation}
\label{g9} \tilde{b}   = b e^{is_{1}(\pi)},   \quad    \tilde{a} = a,
\quad
 \tilde{d}  = de^{i (s_{1}(\pi)+s_{2}(\pi))},   \quad
 \tilde{c}  = c e^{is_{2}(\pi)}.
\end{equation}

\end{Proposition}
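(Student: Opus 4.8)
The plan is to eliminate the diagonal entries $S_{11},S_{22}$ of the potential matrix $S$ by a diagonal gauge transformation and then to read off the induced change in the boundary coefficients. Concretely, I would introduce the diagonal multiplication operator
$$
M\tilde u =
\begin{pmatrix} e^{i s_{1}(t)} & 0 \\ 0 & e^{-i s_{2}(t)} \end{pmatrix}
\begin{pmatrix} \tilde u_1 \\ \tilde u_2 \end{pmatrix},
$$
with $s_{1},s_{2}$ defined in (\ref{g1}). Since $S_{11},S_{22}\in L^2([0,\pi])$, the functions $s_{1},s_{2}$ are absolutely continuous, hence continuous and bounded on $[0,\pi]$, so the entries of $M$ are continuous and bounded away from both $0$ and $\infty$. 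Therefore $M$ is an automorphism of $\left(L^2([0,\pi])\right)^2$ with inverse $M^{-1}=\mathrm{diag}\left(e^{-is_{1}},\,e^{is_{2}}\right)$.

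Next I would verify that $M$ conjugates one operator into the other. For $\tilde u$ absolutely continuous with $\tilde u'\in L^2$, I set $u=M\tilde u$, differentiate, and use $s_{1}'=S_{11}$, $s_{2}'=S_{22}$. The factor $e^{i s_{1}}$ in the first component contributes a term $(-s_{1}'+S_{11})e^{is_{1}}\tilde u_1=0$, and likewise the factor $e^{-i s_{2}}$ in the second component kills the $S_{22}$ term, while the off-diagonal entries acquire exactly the exponential factors in (\ref{g3}). The outcome is the identity
$$
i\begin{pmatrix} 1 & 0 \\ 0 & -1 \end{pmatrix} u' + S u
= M\left[\, i\begin{pmatrix} 1 & 0 \\ 0 & -1 \end{pmatrix}\tilde u'
+\begin{pmatrix} 0 & S_{12}e^{-i(s_{1}+s_{2})} \\ S_{21}e^{i(s_{1}+s_{2})} & 0 \end{pmatrix}\tilde u\,\right],
$$
i.e. $L_{bc}(S)\,M=M\,(L^0+v)$ as a computation on functions, with $v$ given by (\ref{g3}). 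Since $M$ and $M^{-1}$ preserve absolute continuity and map $L^2$-derivatives to $L^2$-derivatives, $M$ will carry $D(L_{\widetilde{bc}}(v))$ bijectively onto $D(L_{bc}(S))$, provided the boundary conditions are matched.

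Finally I would track the boundary conditions, which is the only point requiring genuine care. Because $s_{1}(0)=s_{2}(0)=0$, evaluation at $t=0$ is unaffected, $u_j(0)=\tilde u_j(0)$, whereas at $t=\pi$ one has $u_1(\pi)=e^{i s_{1}(\pi)}\tilde u_1(\pi)$ and $u_2(\pi)=e^{-i s_{2}(\pi)}\tilde u_2(\pi)$. Inserting these into the first equation of (\ref{8g}) gives at once $\tilde b=b\,e^{i s_{1}(\pi)}$, $\tilde a=a$. Inserting them into the second equation yields
$$
d\,e^{i s_{1}(\pi)}\,\tilde u_1(\pi)+c\,\tilde u_2(0)+e^{-i s_{2}(\pi)}\,\tilde u_2(\pi)=0;
$$
to restore the normalization (coefficient $1$ in front of $\tilde u_2(\pi)$) I multiply through by $e^{i s_{2}(\pi)}$, obtaining $\tilde d=d\,e^{i(s_{1}(\pi)+s_{2}(\pi))}$ and $\tilde c=c\,e^{i s_{2}(\pi)}$, which are precisely the coefficients in (\ref{g9}). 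Hence $M$ maps $D(L_{\widetilde{bc}}(v))$ onto $D(L_{bc}(S))$, and combined with the computation above this establishes the similarity $L_{bc}(S)=M\,L_{\widetilde{bc}}(v)\,M^{-1}$. I expect the main obstacle to be this boundary-condition bookkeeping, together with checking that $M^{\pm1}$ genuinely respect the domains, rather than the (routine) gauge computation itself.
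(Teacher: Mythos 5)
Your proposal is correct and follows essentially the same route as the paper: the paper conjugates by the diagonal gauge factor $A=\mathrm{diag}\left(e^{-is_1},\,e^{is_2}\right)$ (your $M$ is exactly $A^{-1}$), verifies the formal intertwining identity, and matches the domains via the boundary conditions, just as you do. Your boundary-condition bookkeeping, including the renormalization by $e^{is_2(\pi)}$ to recover (\ref{g9}), is precisely the computation the paper leaves implicit in its domain-correspondence claim.
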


\begin{proof}
A simple calculation shows that formally
\begin{equation}
\label{g11} A L_{bc}(S) = L_{\widetilde{bc}}(v) A, \quad
\text{where} \quad A =
\begin{pmatrix} e^{-is_{1}(t)}  &  0  \\  0  & e^{is_{2}(t)}
\end{pmatrix}.
\end{equation}
The domain $Dom(L_{bc}(S))$ consists of all absolutely continuous
functions $u= \begin{pmatrix}u_1 \\u_2   \end{pmatrix}$ such that
(\ref{8g}) holds and $u_1^\prime, u_2^\prime \in L^2([0,\pi]),$ and
the domain $Dom(L_{\widetilde{bc}}(v))$ consists of all absolutely
continuous functions $\tilde{u}= \begin{pmatrix}\tilde{u}_1
\\\tilde{u}_2
\end{pmatrix}$ such that (\ref{g8}) holds and $\tilde{u}_1^\prime,
\tilde{u}_2^\prime \in L^2([0,\pi]).$ Therefore,
$u \in Dom(L_{bc}(S)) $ if and only if $\tilde{u} = A u  \in Dom
(L_{\widetilde{bc}}(v)). $ This, together with (\ref{g11}), means
that the operator $L_{bc}(S) $  subject to the boundary conditions
(\ref{8g}) is similar to the operator $L_{\widetilde{bc}}(v) $
subject to the boundary conditions (\ref{g8}).
\end{proof}

In view of Lemma~\ref{lemg1} and Proposition~\ref{propT}, now we
can extend our results from the previous sections to the case of
weighted eigenvalue problems on an arbitrary finite interval
$[x_1, x_2].$

\begin{Definition}
\label{GR} We say that the equations (\ref{88g}) give regular,
strictly regular or periodic type boundary conditions for the
operator (\ref{g.1}) if  (\ref{g8}) are regular, strictly regular
or periodic type boundary conditions for the operator (\ref{g3}).
\end{Definition}

By (\ref{10}) and  (\ref{g9}), the boundary conditions (\ref{g8})
are regular  if $$\tilde{b} \tilde{c} -\tilde{a} \tilde{d} =
(bc-ad) e^{i(s_{1}(\pi)+s_{2}(\pi))} \neq 0, $$ so  (\ref{8g}) are
{\em regular}
 boundary conditions for the operator (\ref{g2}) if and only if
 \begin{equation}
\label{g12}
bc - ad \neq 0.
\end{equation}
From (\ref{11}) and (\ref{g9}) it follows that  (\ref{g8})  are {\em
strictly regular} boundary conditions for the operator (\ref{g2}) if
and only if $ (\tilde{b} -\tilde{c} )^2  + 4\tilde{a} \tilde{d} \neq
0 $ which is equivalent to
 \begin{equation}
\label{g14} \left  (be^{is_1(\pi)} -ce^{is_2(\pi)} \right )^2  +
4ade^{i(s_1 (\pi) +s_2(\pi))} \neq 0.
\end{equation}
Finally, by  (\ref{72}) and (\ref{g9}), the equations (\ref{8g}) give
{\em periodic type}  boundary conditions for the operator (\ref{g2})
if
 \begin{equation}
\label{g15} be^{is_{1}(\pi)} =ce^{is_{2}(\pi)},   \quad a=0, \quad
d=0.
\end{equation}
\vspace{3mm}

The next theorem generalizes  the results in Sections 3-5 (see
Theorems~\ref{thm1}, \ref{EC}, \ref{GCT}).

\begin{Theorem}
\label{GGCT} Suppose $\rho \in L^1 ([x_1, x_2]), \; \rho (x) \geq
const
>0,$ and
 \begin{equation}
\label{g22} T= \begin{pmatrix}  T_{11}  &  T_{12} \\ T_{21}  &
T_{22}      \end{pmatrix}, \quad \frac{1}{\rho} T_{ij} \in L^2 ([x_1, x_2],
\rho).
\end{equation}
Consider the Dirac the operator
\begin{equation}
\label{g.22} L_{bc} (T, \rho )y :=  \frac{1}{\rho (x)} \left [ i
\begin{pmatrix} 1 & 0
\\ 0 & -1
\end{pmatrix}   \frac{dy}{dx}  + Ty \right ],
\quad y=\begin{pmatrix}y_1 \\ y_2
\end{pmatrix},
\end{equation}
subject to regular $bc $  (in the sense of Definition~\ref{GR})
 \begin{equation}
\label{g23}
y_1 (x_1) +b y_1 (x_2) + a y_2 (x_1) =0,  \quad
 d y_1 (x_2) + c y_2 (x_1) +  y_2 (x_2)=0.
\end{equation}

(A). If $bc$ are strictly regular (i.e., (\ref{g12}) and (\ref{g14})
hold), then in $(L^2([x_1,x_2],\rho))^2$ there is a basis of Riesz
projections $\{S_N, \; P^\alpha_n, \; \alpha=1,2,\;  N, n \in
2\mathbb{Z}, \; |n|>N\}$ of the operator $L_{bc}(T,\rho)$ such that
$\dim S_N = 2N+2, \; \dim P^\alpha_n =1, $    and
 \begin{equation}
\label{g24}
{\bf f} = S_N {\bf f} + \sum_{|n|>N}  \sum_{\alpha =1}^2
 P^\alpha_n {\bf f} \quad \forall {\bf f}
=\begin{pmatrix}    f_1  \\ f_2  \end{pmatrix} \in
(L^2([x_1,x_2],\rho))^2,
\end{equation}
where the series converge
unconditionally in $(L^2([x_1,x_2],\rho))^2.$

(B). If $bc$ are regular but not strictly regular (i.e., (\ref{g12})
holds but (\ref{g14}) fails), then in $(L^2([x_1,x_2],\rho))^2$
there is a bases of Riesz projections  $\{S_N, \; P_n,\; N, n \in
2\mathbb{Z}, \; |n|>N\}$ of the operator $L_{bc}(T,\rho)$ such that
$\dim S_N = 2N+2, \; \dim P_n =2, $ and
 \begin{equation}
\label{g25} {\bf f} = S_N {\bf f} + \sum_{|n|>N}    P_n {\bf f}
\quad \forall {\bf f} =\begin{pmatrix}    f_1  \\ f_2
\end{pmatrix} \in (L^2([x_1,x_2],\rho))^2,
\end{equation}
where the series converge
unconditionally in $(L^2([x_1,x_2],\rho))^2.$

(C).   If $ \;{\bf f} =\begin{pmatrix}    f_1  \\ f_2
\end{pmatrix}, $ where $f_1 $ and $f_2 $
 are functions of bounded variation on $[x_1, x_2],$ then
 the series (\ref{g24})  and (\ref{g25}) converge point-wise
 to a function ${\bf \tilde{f}} (x)
=\begin{pmatrix}    \tilde{f}_1 (x)  \\ \tilde{f}_2 (x) \end{pmatrix} $
 in the sense that
 \begin{equation}
\label{g26}
 (S_N {\bf f}) (x) + \lim_{M \to \infty} \sum_{N<|n| \leq M}
   \sum_{\alpha =1}^2 \left (P^\alpha_n {\bf f} \right ) (x)
= {\bf \tilde{f}} (x)
\end{equation}
in the strictly regular case, and
 \begin{equation}
\label{g27}
 (S_N {\bf f}) (x) + \lim_{M \to \infty} \sum_{N<|n| \leq M}
   (P_n {\bf f}) (x) ={\bf \tilde{f}} (x)
\end{equation}
if $bc $ is regular but not strictly regular.   Moreover,
\begin{equation}
\label{g28} {\bf \tilde{f}} (x) =  \frac{1}{2} ({\bf f} (x-0)  +
{\bf f} (x+0))  \quad \text{if}  \; \; x \in (x_1,x_2),
\end{equation}
and
\begin{equation}
\label{g30}{\bf \tilde{f}} (x_1) = \frac{1}{2}
\begin{pmatrix} f_1(x_1+0) -b f_1 (x_2-0) -a f_2(x_1+0) \\
   \frac{d}{bc-ad} f_1(x_1+0) + f_2(x_1+0) -
   \frac{b}{bc-ad} f_2(x_2-0)  \end{pmatrix},
   \end{equation}
   \begin{equation}
   \label{g31}  {\bf \tilde{f}} (x_2)  =
    \frac{1}{2} \begin{pmatrix}  \frac{-c}{bc-ad}
    f_1 (x_1+0)+f_1(x_2-0 )+ \frac{a}{bc-ad} f_2 (x_2-0) \\
    -d f_1 (x_2-0)  - c f_2(x_1+0) +f_2(x_2-0)    \end{pmatrix}.
 \end{equation}

If, in addition, both $f_1 (x_2 -x)$ and $f_2(x)$ are  continuous on
some closed subinterval of $(x_1,x_2 )$ then the convergence in
(\ref{g26}) and (\ref{g27}) is uniform on that interval.
The convergence is uniform on the
closed interval $[x_1, x_2]$ if and only if $f_1$  and $f_2$ are continuous
on $[0, \pi]$ and $\begin{pmatrix}  f_1 \\  f_2  \end{pmatrix} $
satisfies the boundary condition $bc$ given by (\ref{g23}).

\end{Theorem}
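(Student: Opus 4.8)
The plan is to reduce the weighted problem on $[x_1,x_2]$ to the standard Dirac operator on $[0,\pi]$ already analyzed in Theorems~\ref{thm1}, \ref{EC} and~\ref{GCT}, by composing the two similarities at hand. By Lemma~\ref{lemg1}, $L_{bc}(T,\rho)$ is similar through the change-of-variable isomorphism $W^{(2)}$ to $K\cdot L_{bc}(S)$, and by Proposition~\ref{propT} the latter is similar through the diagonal multiplier $A=\mathrm{diag}(e^{-is_1(t)},e^{is_2(t)})$ to the operator $L_{\widetilde{bc}}(v)$ with the off-diagonal $L^2$-potential $v$ of (\ref{g3}) and the boundary conditions $\widetilde{bc}$ of (\ref{g8})--(\ref{g9}). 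Putting $U:=A\,W^{(2)}$ gives $K^{-1}L_{bc}(T,\rho)=U^{-1}L_{\widetilde{bc}}(v)\,U$; the positive scalar $K$ only rescales the spectrum and leaves the root functions and the grouping of eigenvalues into the projections of Theorem~\ref{thm1} unchanged. By Definition~\ref{GR}, $\widetilde{bc}$ is regular, strictly regular, or periodic type exactly when $bc$ is, so Theorem~\ref{thm1} applies to $L_{\widetilde{bc}}(v)$.

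For (A) and (B) I would conjugate by $U$ the Riesz basis of projections of $L_{\widetilde{bc}}(v)$ supplied by (\ref{2.72}) and (\ref{2.75}): the operators $S_N:=U^{-1}\widetilde S_N U$, $P^\alpha_n:=U^{-1}\widetilde P^\alpha_n U$ and $P_n:=U^{-1}\widetilde P_n U$ are Riesz projections of $L_{bc}(T,\rho)$ of the same dimensions, and since $U$ is an isomorphism of $(L^2([x_1,x_2],\rho))^2$ onto $(L^2([0,\pi]))^2$ the unconditional expansions (\ref{g24}) and (\ref{g25}) follow immediately.

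For (C) I would first note that, by the Riesz basis of projections property, $S_N+\sum_{N<|n|\le M}\sum_\alpha P^\alpha_n=S_M$, so that the partial sums in (\ref{g26}) and (\ref{g27}) coincide with $S_M{\bf f}$, and $S_M{\bf f}=U^{-1}\widetilde S_M\tilde{\bf u}$ with $\tilde{\bf u}:=U{\bf f}$. Since $x(t)$ is an increasing homeomorphism and $s_1,s_2$ are continuous (their integrands lie in $L^2$), the components $\tilde u_1=e^{-is_1}f_1(x(\cdot))$ and $\tilde u_2=e^{is_2}f_2(x(\cdot))$ are of bounded variation and continuous at $0,\pi$. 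Splitting $\widetilde S_M\tilde{\bf u}=\widetilde S_M^0\tilde{\bf u}+(\widetilde S_M-\widetilde S_M^0)\tilde{\bf u}$, the equiconvergence Theorem~\ref{EC} (applied to $L_{\widetilde{bc}}(v)$ with the bounded-variation datum $\tilde{\bf u}$) forces the second term to $0$ in $\|\cdot\|_\infty$, while Theorem~\ref{GCT} gives the point-wise limit $\tilde{\bf h}$ of the free partial sum $\widetilde S_M^0\tilde{\bf u}$, described by (\ref{c03})--(\ref{c3}) with $(f,g,a,b,c,d)$ replaced by $(\tilde u_1,\tilde u_2,\tilde a,\tilde b,\tilde c,\tilde d)$. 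Because $U^{-1}=(W^{(2)})^{-1}A^{-1}$ acts point-wise — a continuous change of variable followed by multiplication by the continuous factors $e^{is_1},e^{-is_2}$ — it preserves point-wise and locally uniform limits, so $S_M{\bf f}\to\tilde{\bf f}:=U^{-1}\tilde{\bf h}$.

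The remaining work, and the step I expect to be the real obstacle, is to identify $U^{-1}\tilde{\bf h}$ with the right-hand sides of (\ref{g28}), (\ref{g30}), (\ref{g31}). At an interior point the factors $e^{\pm is_j}$ are continuous and cancel against the one-sided limits, and the increasing homeomorphism $x(t)$ carries one-sided limits to one-sided limits, so (\ref{c03}) yields (\ref{g28}). At the endpoints I would substitute (\ref{c3}) at $t=0,\pi$, apply $A^{-1}$ there using $s_1(0)=s_2(0)=0$, and invoke the relations (\ref{g9}); a direct check shows that the phases carried by $\tilde a,\tilde b,\tilde c,\tilde d$ and by $\tilde u_j(\pi)$ cancel those produced by $A^{-1}$, while $\tilde b\tilde c-\tilde a\tilde d=(bc-ad)e^{i(s_1(\pi)+s_2(\pi))}$ collapses the denominators to $bc-ad$, reproducing (\ref{g30}) and (\ref{g31}) with exactly the unweighted coefficients. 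Finally, the uniform-convergence clauses transfer through $U$: the continuity hypotheses on the one-sided data pull back along $x(t)$ (this is the origin of the reflected argument $f_1(x_2-x)$ and of the interplay with $t\mapsto\pi-t$ in Theorem~\ref{GCT}), and by the boundary correspondence underlying Proposition~\ref{propT} (cf. Lemma~\ref{lembc}) the pair $(f_1,f_2)$ satisfies $bc$ precisely when $(\tilde u_1,\tilde u_2)$ satisfies $\widetilde{bc}$, which is the exact situation in which Theorem~\ref{GCT} yields uniform convergence on all of $[0,\pi]$, hence on $[x_1,x_2]$ after transport by $U^{-1}$.
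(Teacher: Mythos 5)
Your proposal is correct and follows essentially the same route as the paper: reduce to $[0,\pi]$ with $\rho\equiv 1$ via Lemma~\ref{lemg1}, pass to $L_{\widetilde{bc}}(v)$ via Proposition~\ref{propT}, obtain (A) and (B) from Theorem~\ref{thm1}, and get (C) by combining equiconvergence (Theorem~\ref{EC}) with the free-operator point-wise limit (Theorem~\ref{GCT}), transporting the endpoint formulas back through the phase factors $e^{\pm i s_j}$ and the relations (\ref{g9}). The only cosmetic differences are that you compose the two similarities into a single operator $U=A\,W^{(2)}$ and make explicit the split $\widetilde S_M=\widetilde S_M^0+(\widetilde S_M-\widetilde S_M^0)$, which the paper invokes implicitly when it cites the point-wise convergence of the spectral decompositions of $L_{\widetilde{bc}}(v)$.
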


{\em Remark.}  One can easily see by (\ref{g23}), (\ref{g30}) and
(\ref{g31}) that if the function ${\bf f}$ is continuous at $x_1$ and
$ x_2$ then
$$
{\bf \tilde{f}} (x_1) = {\bf f} (x_1), \quad {\bf \tilde{f}} (x_2) =
{\bf f} (x_2)
$$
if and only if  ${\bf f}$ satisfies the boundary conditions
(\ref{g23}).

\begin{proof}
In view of Lemma~\ref{lemg1} and Proposition~\ref{propT}, (A) and
(B) follow from Theorem~\ref{thm1}.  Below, we show that (C)
follows from  Theorem~\ref{GCT}.

By Lemma~\ref{lemg1}, a suitable change of variable $t=t(x)$
transforms the operator $L_{bc}(T, \rho) $ subject to the boundary
conditions given by the matrix $\begin{pmatrix} 1 & b  & a & 0 \\ 0
& d & c & 1 \end{pmatrix}$ on $[x_1, x_2]$  into the operator $
L_{bc} (S) $ subject to $bc $ given by the same matrix on  $[0,
\pi]$.
 Therefore, it is enough to prove (C) in the
case where $x_1=0, \; x_2=\pi, \;\rho \equiv 1$ and  $T\equiv S.$

By Proposition~\ref{propT}, the operator $L_{bc}(S)$
 is similar to the operator $L_{\widetilde{bc}}(v) $
 defined in
(\ref{g3}) and subject to the boundary conditions
$\widetilde{bc}$
 given by the
matrix $\begin{pmatrix} 1 & \tilde{b} & \tilde{a} & 0 \\ 0 &
\tilde{d} & \tilde{c}  & 1
\end{pmatrix},$
where $\tilde{a}, \tilde{b}, \tilde{c}, \tilde{d}$ are defined by
(\ref{g9}).  By~(\ref{g11}), $$ L_{bc}(S)= A^{-1}
L_{\widetilde{bc}}(v) A \quad \text{with} \quad A=
\begin{pmatrix} e^{-is_1 (t)} & 0
\\ 0 &e^{is_2 (t)} \end{pmatrix}, $$
where $s_1 (t) $  and $s_2 (t) $  come from (\ref{g1}).

 Since the operators $A$ and $A^{-1} $ act on
vector-functions by multiplying their components by exponential
functions, the point-wise convergence of the spectral decompositions
of the operator $L_{\widetilde{bc}}(v) $ yields a point-wise
convergence of the spectral decompositions of the operator
$L_{bc}(S).$ Therefore, under the assumptions in (C), (\ref{g28})
holds, and the convergence is uniform on a closed subinterval $I
\subset (0,\pi )$ provided that $f_1 (\pi -t)$ and $f_2 (t)$ are
continuous on $I.$  Moreover, if $f_1 $
and $f_2 $ are continuous and
$\begin{pmatrix}  f_1  \\ f_2   \end{pmatrix}$ satisfies the boundary condition (\ref{g23}), then $A\begin{pmatrix}  f_1  \\ f_2   \end{pmatrix}$ satisfies the boundary conditions $\widetilde{bc},$ so the uniform
convergence on $[0,\pi]$ follows from Theorem~\ref{GCT}.

Next we consider the convergence at the  points $x_1=0, x_2= \pi$
 and show that (\ref{g30}) and (\ref{g31}) hold. Let $\{\hat{S}_N (v),
 \; \hat{P}_n (v), \; |n|>N\}$ be a basis of Riesz projections of the
operator $L_{\widetilde{bc}}(v),  $  and let $$ \{S_N
=A^{-1}\hat{S}_N (v) A, \; P_n =A^{-1} \hat{P}_n (v) A, \; |n|>N\}$$
be the corresponding basis of Riesz projections of the operator
$L_{bc} (S).$  Then  we have, for every $t \in [0, \pi],$ $$ {\bf
\tilde{f}}(t) = (S_N {\bf f})(t) + \lim_{M\to \infty}
\sum_{N<|n|\leq M} (P_n {\bf f})(t) $$
$$ = ( A^{-1} \hat{S}_N (v) A {\bf f})(t) + \lim_{M\to \infty}
\sum_{N<|n|\leq M} \left (A^{-1}\hat{P}_n (v) A {\bf f}\right )(t).
$$ Therefore,
\begin{equation}
\label{g51} {\bf \tilde{f}}(t)= A^{-1} \begin{pmatrix}
\widehat{f_1}
\\ \widehat{f_1}
\end{pmatrix} (t), \quad    t \in [0, \pi],
\end{equation}
where
\begin{equation}
\label{g52}
\begin{pmatrix}
\widehat{f_1} \\ \widehat{f_2}
\end{pmatrix} (t)
= S_N (v) A \begin{pmatrix} f_1 \\ f_2
\end{pmatrix} (t)
+ \lim_{M\to \infty} \sum_{N<|n|\leq M} P_n (v) A \begin{pmatrix}
f_1 \\ f_2  \end{pmatrix} (t).
\end{equation}
Since
$$
S \begin{pmatrix}
f_1 \\ f_2
\end{pmatrix} (t)  =
\begin{pmatrix}
e^{-is_1 (t)} f_1 (t) \\ e^{i s_2 (t)} f_2 (t)
\end{pmatrix} \quad \text{and} \quad
A^{-1} \begin{pmatrix}
  g_1 \\ g_2
\end{pmatrix} (t)  =
\begin{pmatrix}
e^{is_1 (t)} g_1 (t) \\ e^{-i s_2 (t)} g_2 (t)
\end{pmatrix},
$$
we obtain
$$
\begin{pmatrix}
\widehat{f_1} (0) \\ \widehat{f_1} (\pi) \\ \widehat{f_2} (0) \\
\widehat{f_2} (\pi)
\end{pmatrix}
= \frac{1}{2} M_{\widetilde{bc}}
\begin{pmatrix}
f_1 (0) \\ e^{-is_1 (\pi)} f_1 (\pi) \\ f_2 (0) \\ e^{is_2 (\pi)}
f_2 (\pi)
\end{pmatrix}, \quad
\begin{pmatrix}
\widetilde{f_1} (0) \\ \widetilde{f_1} (\pi) \\ \widetilde{f_2}
(0) \\ \widetilde{f_2} (\pi)
\end{pmatrix} = \begin{pmatrix}
\widehat{f_1} (0) \\ e^{is_1 (\pi)} \widehat{f_1} (\pi)
\\ \widehat{f_2} (0) \\ e^{-is_2 (\pi)} \widehat{f_2} (\pi)
\end{pmatrix},
$$
where $\mathcal{M}_{\widetilde{bc}} $ is the transition  matrix (\ref{c17})
corresponding to the boundary conditions $\widetilde{bc}.$
Since
$$
\mathcal{M}_{\widetilde{bc}} =
 \left [ \begin{array}{cccc}
1 &  -\tilde{b}  &  -\tilde{a}  &  0 \\  \frac{-\tilde{c}}{\tilde{b}\tilde{c}-\tilde{a}\tilde{d}}
& 1 &  0  & \frac{\tilde{a}}{\tilde{b}\tilde{c}-\tilde{a}\tilde{d}}\\
\frac{\tilde{d}}{\tilde{b}\tilde{c}-\tilde{a}\tilde{d}}
&  0  &  1  &  \frac{-\tilde{b}}{\tilde{b}\tilde{c}-\tilde{a}\tilde{d}}
\\ 0  &  -\tilde{d}  &  -\tilde{c}  &  1
  \end{array}  \right ],
$$ an easy calculation (which is omitted) shows that the formulas
(\ref{g30}) and (\ref{g31}) hold with $x_1 =0, \; x_2 =\pi. $ This
completes the proof.
\end{proof}

\section{Self-adjoint separated boundary conditions}

A boundary condition $bc $ given by a matrix $\begin{pmatrix}
1 & b& a & 0 \\ 0  &  d  &  c  & 1   \end{pmatrix} $ is called { \em separated}
if $b=c = 0;$  such $bc$ has the form
\begin{equation}
\label{s1}
y_1 (x_1) + a y_2 (x_1 ) = 0, \quad       d y_1 (x_2)  +  y_2 (x_2 ) = 0.
\end{equation}
In the case
\begin{equation}
\label{s2} a=e^{2i\alpha_1}, \quad  d=e^{-2i\alpha_2}, \quad
\alpha_1, \alpha_2 \in [0, \pi),
\end{equation}
we have a {\em self-adjoint separated} $bc$ which could be written in the
form
\begin{equation}
\label{s3} e^{-i\alpha_1}y_1 (x_1) + e^{i\alpha_1} y_2 (x_1 ) = 0,
\quad
    e^{-i\alpha_2} y_1 (x_2)  + e^{i\alpha_2} y_2 (x_2 ) = 0.
\end{equation}
In view of Corollary \ref{cor22}, (\ref{s3})  gives the
general form of  self-adjoint separated boundary conditions.

\begin{Theorem}
\label{thms1} Consider  on $[x_1, x_2]$ the Dirac operator
\begin{equation}
\label{s4} L_{bc}(D)y=\begin{pmatrix}  i & 0 \\ 0 & -i  \end{pmatrix}
\frac{dy}{dx}+ Dy, \quad D=
\begin{pmatrix}  A_1+iA_2   &  P_1 + i P_2
\\ P_1- i P_2  & A_1 -iA_2    \end{pmatrix} ,
\end{equation}
where $A_1, A_2, P_1, P_2 $ are real $L^2  $-functions, and  $bc$
is given by (\ref{s3}).
\vspace{1mm}

(a)  The spectrum of $L_{bc}(D)$ is discrete;
 each eigenvalue is real and has equal
geometric and algebraic multiplicities.
 Moreover, there are numbers  $N=N(D,bc)\in \mathbb{N}  $ and    $\tau= \tau (D,bc) \in \mathbb{R} $
such that,
with $\ell= x_2 -x_1, $
the interval
 $\left ((\tau - N -\frac{1}{4})\frac{\pi}{\ell}, (\tau + N +\frac{1}{4})\frac{\pi}{\ell} \right)$
 contains exactly $2N+1$ eigenvalues
 (counted with multiplicity),
and  for $n\in \mathbb{Z} $ with $  |n|>N $ there is exactly one (simple!)
eigenvalue  $\lambda_n \in \left( (\tau + n
-\frac{1}{4})\frac{\pi}{\ell},
  (\tau + n +\frac{1}{4})\frac{\pi}{\ell} \right ).$
\vspace{1mm}

(b) There is a Riesz basis in $L^2 ([x_1,x_2], \mathbb{C}^2)$ which
elements are eigenfunctions of the operator $(L_{bc}(D)) $  of the
form
\begin{equation}
\label{s6} \Phi = \left \{\begin{pmatrix}  \varphi_k \\
\overline{\varphi}_k  \end{pmatrix} =
\begin{pmatrix}  u_k + i v_k \\ u_k - i v_k    \end{pmatrix},
 \;\; u_k, v_k \in L^2
([x_1,x_2], \mathbb{R}), \; k \in \mathbb{Z}    \right  \},
\end{equation}
and its adjoint biorthogonal system has the form
\begin{equation}
\label{s6a}
\Psi = \left \{\begin{pmatrix}  \psi_k \\
\overline{\psi}_k  \end{pmatrix} =
\begin{pmatrix}  a_k + i b_k \\ a_k - i b_k    \end{pmatrix},
\;\; a_k,  b_k  \in L^2 ([x_1,x_2], \mathbb{R}), \; k \in \mathbb{Z}  \right  \}.
\end{equation}

(c) If $F =\begin{pmatrix}  F_1 \\ F_2 \end{pmatrix}$ is a function
of bounded variation on $[x_1, x_2],$ then its expansion about the
basis $\Phi $ converges point-wise to a function $\tilde{F} (x), $
\begin{equation}
\label{s31} \tilde{F} (x) = \begin{pmatrix}  \tilde{F}_1 \\
\tilde{F}_2
\end{pmatrix} (x) =\lim_{M\to \infty } \sum_{|k|\leq M} c_k
(F)\begin{pmatrix}  u_k + i v_k \\ u_k - i v_k   \end{pmatrix} (x),
\end{equation}
where
\begin{equation}
\label{s32} c_k (F) = \left \langle  \begin{pmatrix}  F_1 \\ F_2
\end{pmatrix}, \begin{pmatrix}  a_k + i b_k \\ a_k - i b_k    \end{pmatrix}
  \right \rangle,\quad k \in \mathbb{Z},
\end{equation}
and
\begin{equation}
\label{s33}
 \tilde{F} (x)  = \frac{1}{2}
(F(x-0) + F(x+0) ), \quad x_1 < x < x_2,
\end{equation}
\begin{equation}
\label{s34} \tilde{F} (x_1) = \begin{pmatrix}  F_1 (x_1 +0) -
\exp(2i\alpha_1) F_2 (x_1 +0) \\ - \exp(-2i\alpha_1)F_1 (x_1 +0) +
F_2 (x_1 +0)
\end{pmatrix},
\end{equation}
\begin{equation}
\label{s35} \tilde{F} (x_2) = \begin{pmatrix}  F_1 (x_2 -0) -
\exp(2i\alpha_2) F_2 (x_2 -0) \\ - \exp(-2i\alpha_2) F_1 (x_2 -0) +
F_2 (x_2 -0)
\end{pmatrix}.
\end{equation}

Moreover, if $F_1 (x_2 -x)$ and $F_2(x)$ are  continuous on
some closed subinterval of $(x_1,x_2 )$ then the convergence in
(\ref{s31})  is uniform on that interval.
The convergence is uniform on the
closed interval $[x_1, x_2]$ if and only if $F_1$  and $F_2$ are continuous
on $[0, \pi]$ and $\begin{pmatrix}  F_1 \\  F_2  \end{pmatrix} $
satisfies the boundary condition $bc$ given by (\ref{s3}).

\end{Theorem}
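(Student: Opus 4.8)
The plan is to exploit the fact that, although $L_{bc}(D)$ is \emph{not} self-adjoint once $A_2\not\equiv0$ --- integration by parts against the boundary conditions (\ref{s3}) leaves a nonzero defect $\langle Ly,z\rangle-\langle y,Lz\rangle$ proportional to $\int A_2\,(y_1\overline{z_1}-y_2\overline{z_2})\,dx$, the boundary term being killed by (\ref{s3}) --- the operator is nonetheless \emph{similar} to a self-adjoint one. After reducing to $[0,\pi]$ with $\rho\equiv1$ (Lemma~\ref{lemg1}), I would apply the diagonal gauge of Proposition~\ref{propT}. The decisive structural feature of $D$ is that its diagonal has real trace, $D_{11}+D_{22}=2A_1$, so the exponent $s_1+s_2=2\int_0^tA_1$ appearing in (\ref{g3}) is real; consequently the two off-diagonal entries of the gauged potential $v$ become complex conjugates of one another and $v=v^*$, while by (\ref{g9}) the gauged boundary conditions remain separated with unimodular $\tilde a=e^{2i\alpha_1}$ and $\tilde d=e^{-2i(\alpha_2-\int_0^\pi A_1)}$, hence self-adjoint by Corollary~\ref{cor22}. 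Thus $L_{\widetilde{bc}}(v)$ is self-adjoint; since similarity preserves both the spectrum and the Jordan structure, every eigenvalue of $L_{bc}(D)$ is real with coinciding geometric and algebraic multiplicities. Discreteness and the localization into strips and discs come from Section~3, and because $b=c=0$ makes the characteristic roots $z_{1,2}=\pm e^{i(\alpha_1-\alpha_2)}$ distinct, $bc$ is strictly regular; Theorem~\ref{thm1}(a) then yields eventual simplicity, and reality confines each $\lambda_n$ to the intersection with $\mathbb{R}$ of its localizing disc, i.e.\ to $\bigl((\tau+n-\tfrac14)\tfrac{\pi}{\ell},(\tau+n+\tfrac14)\tfrac{\pi}{\ell}\bigr)$ after the scaling $K=\pi/\ell$; the free eigenvalues $\tau+n$ with $|n|\le N$ number $2N+1$. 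This settles (a).

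For (b) the relevant symmetry is the antilinear involution $C:(y_1,y_2)\mapsto(\overline{y_2},\overline{y_1})$. Using $\overline{D_{21}}=D_{12}$ and $\overline{D_{22}}=D_{11}$ one checks $CL=LC$, and $C$ preserves (\ref{s3}); hence $C$ carries the $\lambda$-eigenspace onto the $\overline{\lambda}=\lambda$-eigenspace. For $|k|$ large the eigenvalue is simple, so $C\psi_k=e^{i\theta}\psi_k$, and replacing $\psi_k$ by $e^{i\theta/2}\psi_k$ makes it $C$-fixed, i.e.\ of the form (\ref{s6}); in each of the finitely many remaining eigenspaces one selects a $C$-fixed basis (the real form of a $C$-invariant subspace). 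That this system is a Riesz basis is Corollary~\ref{cor2}. The biorthogonal family consists of eigenfunctions of $L^*$, whose potential $D^*$ has the same structure with $A_2$ replaced by $-A_2$ and therefore commutes with the same $C$; the identical argument produces the conjugate-symmetric form (\ref{s6a}).

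Part (c) is the specialization of Theorem~\ref{GGCT}(C) to $\rho\equiv1$, $T=D$, which is admissible since $bc$ is strictly regular. Substituting $b=c=0$, $a=e^{2i\alpha_1}$, $d=e^{-2i\alpha_2}$ (so that $bc-ad=-e^{2i(\alpha_1-\alpha_2)}$ and $d/(bc-ad)=-e^{-2i\alpha_1}$) into the interior limit (\ref{g28}) and the endpoint limits (\ref{g30})--(\ref{g31}) yields (\ref{s33})--(\ref{s35}); expressing the basis and its dual through their real and imaginary parts, $\varphi_k=u_k+iv_k$ and $\psi_k=a_k+ib_k$, rewrites the expansion in the form (\ref{s31})--(\ref{s32}), and the two uniform-convergence criteria carry over verbatim from Theorem~\ref{GCT}.

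The crux --- and the only genuinely new step --- is the reality of the spectrum for a non-self-adjoint operator, which I would obtain from the non-unitary diagonal gauge above: it simultaneously removes the complex diagonal of $D$ and symmetrizes the off-diagonal part, landing on a bona fide self-adjoint operator. The single point needing care is the verification, via (\ref{g9}), that the gauged boundary data stay of self-adjoint separated type; everything else is either a direct specialization of the general theorems or a routine consequence of the two symmetries.
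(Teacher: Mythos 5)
Your proposal is correct, and for parts (a) and (c) it is essentially the paper's own argument: the paper performs exactly the diagonal gauge of Proposition~\ref{propT}, notes that $s_1+s_2=2\int A_1$ is real so the gauged potential is hermitian and the gauged boundary conditions are again of the self-adjoint separated form (\ref{s3}), deduces self-adjointness of $L_{\widetilde{bc}}(v)$ and hence reality and semisimplicity of $Sp\,(L_{bc}(D))$, gets the localization from Lemma~\ref{srl} and strict regularity, and obtains (c) by specializing Theorem~\ref{GGCT}(C). Part (b) is where you take a genuinely different route. The paper never applies the symmetry to $L_{bc}(D)$ itself: it works inside the self-adjoint gauged operator, shows each eigenspace splits as $E_\lambda=(E_\lambda\cap H)\oplus i(E_\lambda\cap H)$ with $H=\left\{\begin{pmatrix}\varphi\\ \overline{\varphi}\end{pmatrix}\right\}$ (this is your involution $C$ in disguise), chooses an \emph{orthonormal} basis of eigenfunctions lying in $H$, and transports it by the diagonal gauge matrix, which preserves the form $\begin{pmatrix}\varphi\\ \overline{\varphi}\end{pmatrix}$ because $\overline{s_1}=s_2$; this yields simultaneously the Riesz property (isomorphic image of an o.n.b.) and the explicit biorthogonal family. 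Your version --- $C$-symmetry applied directly to $L_{bc}(D)$, phase adjustment of the eventually simple eigenvectors, a $C$-fixed (real-form) basis in the finitely many remaining eigenspaces, with the Riesz property inherited from Corollary~\ref{cor2} (more precisely, from Theorem~\ref{GGCT}(A), since $D$ has a nonzero diagonal) because unimodular rescalings and finitely many basis replacements preserve Riesz bases --- is equally valid and avoids re-entering the gauge.

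Two points need tightening. First, the biorthogonal system is uniquely determined by $\Phi$, so you cannot obtain (\ref{s6a}) by ``running the identical argument'' for $L^*$; what must be checked is that the biorthogonal vectors are \emph{automatically} $C$-fixed. This is one line: from $\langle Cf,Cg\rangle=\overline{\langle f,g\rangle}$ and $C\varphi_j=\varphi_j$ one gets $\langle\varphi_j,C\tilde{\psi}_k\rangle=\delta_{jk}$, whence $C\tilde{\psi}_k=\tilde{\psi}_k$ by uniqueness of the biorthogonal family. Second, your substitution into (\ref{g30})--(\ref{g31}) in fact produces the endpoint values with an overall factor $\frac{1}{2}$; formulas (\ref{s34})--(\ref{s35}) as printed lack that factor, but this is a misprint in the paper rather than an error on your side (the $\frac{1}{2}$ is needed so that $\tilde{F}(x_i)=F(x_i)$ when $F$ is continuous and satisfies $bc$, and it reappears in (\ref{s28})--(\ref{s29})), so your computation, not the printed statement, is the internally consistent one.
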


\begin{proof}
(a)  Set
$$
s_1 (x) = \int_{x_1}^x (A_1 (\xi) +i A_2 (\xi)) d\xi,
\quad  s_2 (x) = \int_{x_1}^x (A_1 (\xi) - i A_2 (\xi)) d\xi;
$$
then $\overline{s_1 (x)}= s_2 (x),  $
so the sum $s_1 (x) + s_2 (x) $ is real-valued.
As in Proposition~\ref{propT}
(see (\ref{g3})--(\ref{g9})), one can easily see that
the operator $L_{bc}(D) $ is similar to
the Dirac operator $L_{\widetilde{bc}}(v), $  with $$
v=\begin{pmatrix} 0 & (P_1+iP_2)e^{-i(s_1 (x) + s_2 (x))}\\
(P_1-iP_2)e^{i(s_1 (x) + s_2 (x))} &  0
\end{pmatrix},
$$
$$ \widetilde{bc}: \quad \tilde{a}= a = e^{2i\alpha_1}, \;\;
\tilde{b}=b=0, \quad \tilde{c} = 0, \quad \tilde{d} =
e^{-2i\alpha_2}e^{i(s_1 (x_2) + s_2 (x_2))},$$
and
\begin{equation}
\label{s51}
M L_{bc} (D) = L_{\tilde{bc}} (v) M, \quad
 M =
\begin{pmatrix} e^{-is_{1}(x)}  &  0  \\  0  & e^{is_{2}(x)}
\end{pmatrix} .
\end{equation}

The matrix $v$ is hermitian because the sum $ s_1 (x) + s_2 (x)  $ is real-valued. Since
$\widetilde{bc}$  is a self-adjoint  boundary condition of the form (\ref{s3}),
it follows that  the operator
$L_{\widetilde{bc}}(v) $ is self-adjoint.
Therefore,  its spectrum is real, and moreover,  discrete by
Part (A) of Theorem~\ref{GGCT}.

In the case $x_1=0, \, x_2 = \pi$ a localization of the spectrum of
 $L_{\widetilde{bc}} (v)$ can be obtained by the general
scheme from Section 2 and Lemma~\ref{srl}. Indeed, now the
characteristic equation (\ref{13}) becomes $$z^2 = \tilde{a}
\tilde{d}= e^{2i(\alpha_1 -\alpha_2)} e^{i(s_1 (\pi) + s_2 (\pi))},
$$ so its solutions $z_1, \,z_2 $ can be written as
$$ z_1 = e^{i\pi \tau}, \quad z_2= e^{i\pi (\tau +1)}, \quad
\text{where} \;\; \tau = \frac{1}{2\pi}[2\alpha_1 -2\alpha_2 + s_1
(\pi) + s_2 (\pi)]. $$ Therefore, it follows that $Sp (
L^0_{\widetilde{bc}}) = \{\lambda = \tau + n, \; n \in \mathbb{Z}
\},$ so Lemma~\ref{srl} implies (a) in this case.

The general case of an arbitrary interval $[x_1,x_2]$ could be
reduced to the case of $[0,\pi] $ by the change of variable $x= x_1 +
\frac{\ell}{\pi} \, t, \; \ell=x_2 -x_1. $ \vspace{3mm}

 (b)   In view of Part (B) of Theorem \ref{GGCT} there is a
 Riesz basis in $L^2 ([x_1,x_2], \mathbb{C}^2)$
which consists of eigenfunctions of  $L_{\widetilde{bc}}(v) .$
 Moreover,  since  it is a self-adjoint operator
 there is an orthonormal basis which consists of eigenfunctions of
 $L_{\widetilde{bc}}(v). $

 One can easily see that
 the real vector subspace of $L^2([x_1,x_2],\mathbb{C}^2)$
$$ H = \left \{ \begin{pmatrix} \varphi \\ \overline{\varphi}
\end{pmatrix} \; : \;\; \varphi \in L^2([x_1,x_2],\mathbb{C})
\right \} $$
is invariant subspace for both $L_{bc} (D)$ and
$L_{\widetilde{bc}}(v). $
Moreover,  since
$$
\begin{pmatrix} g_1\\ g_2   \end{pmatrix}
= \begin{pmatrix} \varphi \\ \overline{\varphi}   \end{pmatrix}
+i\begin{pmatrix} \psi \\ \overline{\psi}   \end{pmatrix} \quad
\text{with} \;\; \varphi= \frac{g_1+\overline{g_2}}{2} \;\;
\text{and} \;\; \psi= \frac{g_1-\overline{g_2}}{2i}, $$
we have $ \; L^2([x_1,x_2],\mathbb{C}^2) = H \oplus iH .
$

Suppose $ \lambda \in  Sp \, (L_{\widetilde{bc}}(v)),$ and let $ E_\lambda = \{y: \; L_{\widetilde{bc}}(v) y= \lambda y\} $
be the space of eigenvectors corresponding to
$\lambda. $ By (a) we know that $ \lambda $ is real, and $\dim
E_\lambda < \infty. $ Since $\lambda $ is real, one can easily see by
taking the conjugates that
\begin{equation}
\label{s14}
\begin{pmatrix} g_1\\ g_2   \end{pmatrix} \in
E_{\lambda}  \quad \Rightarrow  \quad \begin{pmatrix}
\overline{g}_2\\ \overline{g}_1
\end{pmatrix}  \in E_{\lambda}.
\end{equation}
Suppose that
$
\begin{pmatrix} g_1\\ g_2   \end{pmatrix}
= \begin{pmatrix} \varphi \\ \overline{\varphi}   \end{pmatrix}
+i\begin{pmatrix} \psi \\ \overline{\psi}   \end{pmatrix} \in E_{\lambda}.
$
Then (\ref{s14}) implies
$
\begin{pmatrix} \overline{g}_2\\ \overline{g}_1   \end{pmatrix}
= \begin{pmatrix} \varphi \\ \overline{\varphi}   \end{pmatrix}
-i\begin{pmatrix} \psi \\ \overline{\psi}   \end{pmatrix} \in E_{\lambda},
$
which yields $ \begin{pmatrix} \varphi \\ \overline{\varphi}
\end{pmatrix} =\frac{1}{2} \begin{pmatrix} g_1\\ g_2   \end{pmatrix} +
\frac{1}{2} \begin{pmatrix} \overline{g}_2\\ \overline{g}_1
\end{pmatrix}\in E_{\lambda}$ and $\begin{pmatrix} \psi \\
\overline{\psi}   \end{pmatrix}= \frac{1}{2i} \begin{pmatrix} g_1\\
g_2 \end{pmatrix} - \frac{1}{2i} \begin{pmatrix} \overline{g}_2\\
\overline{g}_1 \end{pmatrix}\in E_{\lambda}.$ Hence,
$$
E_\lambda = (E_\lambda \cap H) \oplus i(E_\lambda \cap H),
$$
which implies that every basis in $(E_\lambda \cap H)$ (regarded as a
real vector space) is a basis in $E_\lambda $ (regarded as a complex
vector space) as well.
Therefore, one may choose in each of the spaces $E_\lambda $
a basis consisting of mutually orthogonal normalized
vectors from $E_\lambda \cap H.$
Since all but finitely many of these spaces are one-dimensional,
it follows that there is an orthonormal basis
$f_k, \; k \in \mathbb{Z}, $
of the form (\ref{s6})
which consists of eigenfunctions of $L_{\widetilde{bc}}(v).$

By (\ref{s51}), the system
$$
\Phi = \{ M f_k, \; k \in \mathbb{Z} \}
$$
is a Riesz basis
in $L^2 ([x_1,x_2], \mathbb{C}^2)$
which consists of eigenfunctions
of  $L_{bc} (D),$ and the corresponding biorthogonal system is
$$
\Psi = \{ (M^{-1})^*   f_k, \; k \in \mathbb{Z} \}, \quad
(A^{-1})^* = \begin{pmatrix}  e^{-i\overline{s_1}} & 0\\ 0&
e^{i\overline{s_2}}.
\end{pmatrix}
$$
Since $\overline{s_1 (x)}= s_2 (x),$
one can easily verify that the system $\Phi$
has the form (\ref{s6})  and
$\Psi$ has the form (\ref{s6a}).

Finally, in view of (\ref{s2}),  (c) follows from part
(C) of Theorem~\ref{GGCT}.

\end{proof}

Next we provide a  version of Theorem \ref{thms1}
for real-valued functions.

\begin{Theorem}
\label{SBC}
Let $\rho \in L^1 ([x_1, x_2]), $  $ \rho (x) \geq
const >0 $ for $x \in [x_1, x_2],$ and
 \begin{equation}
\label{s21} T= \begin{pmatrix}  T_{11}  &  T_{12} \\ T_{21}  &
T_{22}      \end{pmatrix}, \quad \frac{1}{\rho} T_{ij} \in L^2 ([x_1,
x_2],\rho), \quad T_{ij} -\text{real-valued}.
\end{equation}
  Consider the operator
 \begin{equation}
\label{s22} R_{\widehat{bc}} (T) \begin{pmatrix}  u \\ v
\end{pmatrix} :=  \frac{1}{\rho} \left [\begin{pmatrix} 0& -1\\ 1 & 0 \end{pmatrix}
\frac{d}{dx}\begin{pmatrix}  u \\ v      \end{pmatrix} +
T\begin{pmatrix}  u \\ v      \end{pmatrix}
\right ],
\end{equation}
subject to the boundary conditions
 \begin{equation}
\label{s23} \widehat{bc}: \quad u(x_j) \cos \alpha_j +  v(x_j) \sin
\alpha_j  = 0,    \quad \alpha_1 \neq \alpha_2,
 \quad  j=1,2.
\end{equation}

(A).  The spectrum of the operator $R_{\widehat{bc}} $
is discrete;  each eigenvalue is real and has equal
geometric and algebraic multiplicities.
 Moreover, there are numbers $N=N(T,bc),   $   $\tau= \tau (T,bc)$
and  $\ell= \ell (\rho, x_2 -x_1) $
 such that the interval
 $((\tau - N -\frac{1}{4})\frac{\pi}{\ell},
 (\tau + N +\frac{1}{4})\frac{\pi}{\ell})$
 contains
 $2N+1$ eigenvalues $\lambda_k, \;  -N \leq k \leq N $
 (counted with multiplicity),
and  for $n\in \mathbb{Z}, \;|n|>N, $ there is only one (simple!)
eigenvalue $\lambda_n $
  in the interval  $((\tau + n -\frac{1}{4})
  \frac{\pi}{\ell}, (\tau + n +\frac{1}{4})\frac{\pi}{\ell}).$

(B).
Let $ \mathcal{B}=\left \{
\begin{pmatrix}  u_n \\ v_n    \end{pmatrix},
 \; n \in \mathbb{Z}    \right  \}$
 be a system of normalized real-valued eigenfunctions corresponding
 to the sequence of eigenvalues  $(\lambda_n)_{n \in \mathbb{Z}}.$
 Then the system $\mathcal{B}$ is a Riesz basis
 in $\left (L^2([x_1,x_2],\rho) \right )^2,$  i.e.,
 \begin{equation}
\label{s25}
\begin{pmatrix}  f\\g \end{pmatrix}=
\sum_{n \in \mathbb{Z}}  C_n (f,g)
\begin{pmatrix}  u_n \\  v_n \end{pmatrix}
\qquad \forall  \begin{pmatrix}  f\\g \end{pmatrix} \in
\left (L^2([x_1,x_2],\rho) \right )^2,
\end{equation}
where the series converge unconditionally.

(C). If $f$ and $g$ are real-valued functions
 of bounded variation on $[x_1, x_2],$
  then
 the series in (\ref{s25}) converges point-wise in the sense that
 \begin{equation}
\label{s26} \lim_{M\to \infty}  \sum_{|n| \leq M}
C_n (f,g)
\begin{pmatrix}  u_n(x)\\v_n(x) \end{pmatrix}
:= \begin{pmatrix}    \tilde{f} (x)  \\ \tilde{g} (x)
\end{pmatrix}
\end{equation}
 where
\begin{equation}
\label{s27} \begin{pmatrix}    \tilde{f} (x)  \\ \tilde{g} (x)
\end{pmatrix} = \frac{1}{2} \left [
\begin{pmatrix}    f (x-0)  \\ g (x-0)
\end{pmatrix}+\begin{pmatrix}    f (x+0)  \\ g (x+0)
\end{pmatrix} \right ] \quad \text{for} \;\; x\in
(x_1,x_2),
\end{equation}
and
\begin{equation}
\label{s28}  \begin{pmatrix}
\tilde{f} (x_1) \\  \tilde{g} (x_1)
\end{pmatrix}
 = \frac{1}{2}
\begin{pmatrix}    f (x_1+0) (1-\cos 2\alpha_1) - g(x_1+0) \sin 2\alpha_1  \\
- f(x_1+0) \sin 2\alpha_1+  g (x_1+0) (1+\cos 2\alpha_1)
\end{pmatrix},
\end{equation}
\begin{equation}
\label{s29}  \begin{pmatrix}
\tilde{f} (x_2) \\  \tilde{g} (x_2)
\end{pmatrix}
 = \frac{1}{2}
\begin{pmatrix}    f (x_2-0) (1-\cos 2\alpha_2) - g(x_2-0) \sin 2\alpha_2  \\
- f(x_2-0) \sin 2\alpha_2+  g (x_2-0) (1+\cos 2\alpha_2)
\end{pmatrix}.
\end{equation}

In addition, if the functions $f$ and $g$ are continuous on a
closed subinterval  $ [x_1 +\delta, x_2 - \delta] \subset (x_1,x_2),$
then the convergence in (\ref{s26}) is uniform on
$ [x_1 +\delta, x_2 - \delta].$   The convergence is uniform on
the closed interval $[x_1, x_2]$ if and only if $f$ and $g$ are continuous on $[x_1, x_2]$  and
$\begin{pmatrix} f  \\ g  \end{pmatrix}$ satisfies the boundary condition
 $\widehat{bc}$ given in (\ref{s23}).
\end{Theorem}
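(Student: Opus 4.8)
The plan is to reduce the statement to Theorem~\ref{thms1} by one constant change of frame that diagonalises the skew matrix $J=\begin{pmatrix} 0 & -1 \\ 1 & 0 \end{pmatrix}$. First, exactly as in Lemma~\ref{lemg1} (whose argument uses only that the leading matrix is constant), the substitution $t=t(x)=K\int_{x_1}^{x}\rho$ absorbs the weight and rescales $[x_1,x_2]$ to $[0,\pi]$, turning $R_{\widehat{bc}}(T)$ into $K$ times the unweighted operator $J\frac{d}{dt}+S$ with real $L^2$ entries $S_{ij}$ and the same boundary matrix. Now $J$ has eigenvalues $\pm i$ with eigenvectors $(1,\mp i)^{t}$, so with the constant matrix $P=\begin{pmatrix} 1 & 1 \\ -i & i \end{pmatrix}$ we have $P^{-1}JP=\begin{pmatrix} i & 0 \\ 0 & -i \end{pmatrix}$. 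Substituting $\begin{pmatrix} u \\ v \end{pmatrix}=P\begin{pmatrix} y_1 \\ y_2 \end{pmatrix}$, which is legitimate since $P$ is constant and commutes with $d/dt$, turns the operator into the Dirac operator $L_{bc}(D)$ of Theorem~\ref{thms1} with $D=P^{-1}SP$. A short computation gives $D=\begin{pmatrix} A_1+iA_2 & P_1+iP_2 \\ P_1-iP_2 & A_1-iA_2 \end{pmatrix}$ where $A_1=\tfrac12(S_{11}+S_{22})$, $A_2=\tfrac12(S_{21}-S_{12})$, $P_1=\tfrac12(S_{11}-S_{22})$, $P_2=\tfrac12(S_{12}+S_{21})$ are all real, so $D$ has exactly the form required in Theorem~\ref{thms1}.

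I would next check that $P$ also puts the boundary conditions in standard form. From $u=y_1+y_2$ and $v=i(y_2-y_1)$ one gets $u(x_j)\cos\alpha_j+v(x_j)\sin\alpha_j=e^{-i\alpha_j}y_1(x_j)+e^{i\alpha_j}y_2(x_j)$, so $\widehat{bc}$ becomes precisely the self-adjoint separated condition~(\ref{s3}). Hence $R_{\widehat{bc}}(T)$ is similar, through the constant isomorphism $P$ of $(L^2)^2$, to the operator of Theorem~\ref{thms1}, and (A) and (B) are immediate: spectrum, reality and multiplicities, and the localization are similarity invariants, while the eigenbasis $\{(\varphi_k,\overline{\varphi_k})^{t}\}$ of Theorem~\ref{thms1}(b) is carried to the Riesz basis $\{P(\varphi_k,\overline{\varphi_k})^{t}\}=\{(2\,\mathrm{Re}\,\varphi_k,\,2\,\mathrm{Im}\,\varphi_k)^{t}\}$, which is real-valued and, after normalization, is the system $\mathcal{B}$ in (B); its biorthogonal system transforms by $(P^{-1})^{*}$ and retains the shape~(\ref{s6a}).

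Part (C) transfers because $P$ is \emph{constant}. If $f,g$ are of bounded variation, so are the components of $P^{-1}\begin{pmatrix} f \\ g \end{pmatrix}$, and the Dirac expansion of $P^{-1}\begin{pmatrix} f \\ g \end{pmatrix}$ converges pointwise by Theorem~\ref{thms1}(c); applying $P$ to the partial sums yields (\ref{s26}) with limit $P\,\widetilde{(P^{-1}(f,g))}$. Since $P$ commutes with the midpoint average, the interior formula~(\ref{s27}) drops out at once. The only genuine computation is at the endpoints: I would substitute the separated-$bc$ limits~(\ref{s34})--(\ref{s35}) (equivalently~(\ref{g30})--(\ref{g31}) with $b=c=0$, $a=e^{2i\alpha_j}$, $d=e^{-2i\alpha_j}$) into $P\,\widetilde{(\cdot)}$ and use $e^{2i\alpha_j}+e^{-2i\alpha_j}=2\cos2\alpha_j$ and $e^{2i\alpha_j}-e^{-2i\alpha_j}=2i\sin2\alpha_j$; this collapses the complex exponentials into exactly the real trigonometric matrices of~(\ref{s28}) and~(\ref{s29}). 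Continuity of $\begin{pmatrix} f \\ g \end{pmatrix}$ is equivalent to continuity of $P^{-1}\begin{pmatrix} f \\ g \end{pmatrix}$, and $\begin{pmatrix} f \\ g \end{pmatrix}$ satisfies $\widehat{bc}$ iff $P^{-1}\begin{pmatrix} f \\ g \end{pmatrix}$ satisfies~(\ref{s3}), so the uniform-convergence dichotomy is inherited verbatim from Theorem~\ref{thms1}. The main obstacle is therefore only this endpoint bookkeeping; everything structural is forced by the single similarity $P$.
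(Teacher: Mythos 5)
Your proposal is correct and follows essentially the same route as the paper: the paper's proof also reduces to Theorem~\ref{thms1} via the complexification $(u,v)\mapsto(u+iv,\,u-iv)$ (which is exactly conjugation by your constant matrix $P$, up to a scalar), computes the same potential $D$ with $A_1,A_2,P_1,P_2$, identifies $\widehat{bc}$ with the separated condition (\ref{s3}), and obtains (B) and (C) by transporting the Riesz basis and the pointwise limits through this transformation. The only cosmetic difference is that you phrase the reduction as a full operator similarity $R_{\widehat{bc}}(T)=P\,L_{bc}(D)\,P^{-1}$, which yields discreteness of the spectrum immediately, whereas the paper states only the eigenvalue correspondence (\ref{s101}) and therefore supplies a separate resolvent construction from the Riesz basis to conclude discreteness.
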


\begin{proof}
(A)  In view  Lemma \ref{lemg1},  we may assume that
$\rho \equiv 1. $
Set
$$
A_1 =\frac{1}{2} (T_{11} + T_{22}), \;  P_1 =\frac{1}{2} (T_{11} - T_{22}), \;
A_2 =\frac{1}{2} (T_{21} - T_{12}), \; P_2 =\frac{1}{2} (T_{21} + T_{12})
$$
and consider the operator $L_{bc} (D) $ in (\ref{s4}) with  $D=
\begin{pmatrix}  A_1+iA_2   &  P_1 + i P_2
\\ P_1- i P_2  & A_1 -iA_2    \end{pmatrix}$
and $bc$ given by (\ref{s3}).

 A simple calculation shows that
 $ \begin{pmatrix}  u \\ v    \end{pmatrix}$ satifies (\ref{s23})
 if and only if $\begin{pmatrix}  u + i v \\ u - i v   \end{pmatrix}$
 satisfies (\ref{s3}),
 and
 \begin{equation}
  \label{s101}
    L_{bc} (D) \begin{pmatrix}  u + i v \\ u - i v   \end{pmatrix}
  = \lambda  \begin{pmatrix}  u + i v \\ u - i v   \end{pmatrix}
\; \Leftrightarrow \;
R_{\widehat{bc}}(T) \begin{pmatrix}  u \\ v    \end{pmatrix}= \lambda
 \begin{pmatrix}  u \\ v    \end{pmatrix}.
\end{equation}
Therefore, $\lambda $ is an eigenvalue of $R_{\widehat{bc}}(T)$
if and only if it is an eigenvalue of $L_{bc} (D)$
with the same geometric multiplicity, and
 the localization of eigenvalues of $R_{\widehat{bc}}(T)$
given in (A) follows from part (a) of Theorem~\ref{thms1}.
But it remains to explain that the spectrum of $R_{\widehat{bc}}(T)$
is discrete -- see below.
\vspace{2mm}

(B)  By Theorem \ref{s1}, the system $\Phi $ in (\ref{s6}) is a Riesz basis in
$L^2([x_1,x_2],\mathbb{C}^2)$ consisting of eigenfunctions of the
operator $L_{bc} (D),$ and its  biorthogonal system is given by (\ref{s6a}).  Fix $\begin{pmatrix}  f  \\ g\end{pmatrix} \in
L^2 ([x_1, x_2], \mathbb{R}^2)$  and consider the expansion of
$\begin{pmatrix}  f +i g \\f- i g  \end{pmatrix}$ about the Riesz basis
$\Phi. $  Then
\begin{equation}
\label{s102}
\begin{pmatrix}  f +i g \\f- i g  \end{pmatrix}= \sum_k C_k (f,g)
\begin{pmatrix}  u_k +i v_k \\u_k- i v_k  \end{pmatrix},
\end{equation}
where the series  converges in  unconditionally in
$L^2([x_1,x_2],\mathbb{C}^2)$ and
\begin{equation}
\label{s103}
C(f,g) = \left \langle
\begin{pmatrix}  f +i g \\f- i g  \end{pmatrix},
\begin{pmatrix}  a_k +i b_k \\a_k- i b_k  \end{pmatrix}
\right \rangle =
\left \langle
\begin{pmatrix}  f  \\ g  \end{pmatrix},
2\begin{pmatrix}  a_k \\ b_k  \end{pmatrix}
\right \rangle.
\end{equation}
By taking the first components in (\ref{s102}) and separating the real and imaginary parts we obtain
\begin{equation}
\label{s104}
\begin{pmatrix}  f \\ g  \end{pmatrix}= \sum_k C_k (f,g)
\begin{pmatrix}  u_k  \\ v_k  \end{pmatrix} \quad
\forall \begin{pmatrix}  f \\ g  \end{pmatrix} \in L^2([x_1, x_2],  \mathbb{R}^2),
\end{equation}
where the series  converge  unconditionally in
$L^2([x_1,x_2],\mathbb{R}^2).$

In view of (\ref{s101}), (\ref{s103}) and (\ref{s104}),
the system
 \begin{equation}
 \label{s106}
\mathcal{B} =\left \{
\begin{pmatrix}  u_k \\ v_k    \end{pmatrix}:\;\;
\begin{pmatrix}  u_k + i v_k \\ u_k - i v_k    \end{pmatrix}
\in \Phi,
 \quad k \in \mathbb{Z}    \right  \}
\end{equation}
is a Riesz basis in $L^2([x_1,x_2],\mathbb{R}^2)$
(and therefore, in $L^2([x_1,x_2],\mathbb{C}^2)$)
which consists of eigenfunctions of
the operator $R_{\widehat{bc}}(T),$
and its
biorthogonal system is
\begin{equation}
\label{s107} \mathcal{B}^* = \left \{2\begin{pmatrix}  a_k \\ b_k
\end{pmatrix}:
\begin{pmatrix}  a_k + i b_k \\ a_k - i b_k
\end{pmatrix}\in \Psi,
\quad k \in \mathbb{Z}  \right  \}.
\end{equation}
Now one can use the Riesz basis (\ref{s106})
in order to construct the resolvent
$(\lambda -R_{\widehat{bc}}(T))^{-1}$
for any $\lambda \neq \lambda_k, \; k \in \mathbb{Z}, $
which shows that the spectrum of $R_{\widehat{bc}}(T)$
is discrete and consists of the eigenvalues $\lambda_k, \; k \in \mathbb{Z}.$ \vspace{2mm}

(C)  Let $f$ and $g$ be functions of bounded variation
on $[x_1, x_2].$
Set
$$
F= \begin{pmatrix}  F_1  \\ F_2 \end{pmatrix} =
\begin{pmatrix}  f +i g \\f- i g\end{pmatrix}; \quad \text{then}
 \;\; f=Re \, F_1, \quad g = Im F_1
$$
In view of Part (c) of Theorem~\ref{thms1}, we have
\begin{equation}
 \label{s110}
\lim_{M\to \infty}  \sum_{|n| \leq M}
C_n (f,g)
\begin{pmatrix}  u_n(x) +i v_n (x) \\ u_n (x) - i v_n(x) \end{pmatrix}
:= \tilde{F} (x),
\end{equation}
where
$\tilde{F} (x) =
\begin{pmatrix}    \tilde{F}_1 (x)  \\ \tilde{F}_2 (x)
\end{pmatrix}$
is given by Formulas (\ref{s33})--(\ref{s35})
in terms of $F.$  Obviously, we have $\tilde{F}_2 (x)=
\overline{\tilde{F}_1 (x)}.$  Taking the first components in (\ref{s110})
and separating the real and imaginary parts we obtain
\begin{equation}
 \label{s111}
\lim_{M\to \infty}  \sum_{|n| \leq M}
C_n (f,g)
\begin{pmatrix}  u_n(x)  \\   v_n(x) \end{pmatrix}
:= \begin{pmatrix}    \tilde{f}(x)  \\ \tilde{g}(x)
\end{pmatrix}
\end{equation}
with $\tilde{f}(x)= Re \, \tilde{F}_1 (x)$  and
$\tilde{g}(x)= Im \, \tilde{F}_1 (x).$
Now (C) follows immediately from Part (c) of Theorem~\ref{thms1}.
\end{proof}

We are thankful to  R. Szmytkowski for bringing our
attention to the point-wise convergence problem of spectral
decompositions of 1D Dirac operators. In the case of self-adjoint separated
boundary conditions, our point-wise convergence results
(see (\ref{s28}) and (\ref{s29}))
confirm the
formula suggested by
 R. Szmytkowski (\cite[Formula 3.14]{SZ01}).

\section{Appendix: Discrete Hilbert transform and multipliers}

The aim of this Appendix is to prove Lemma \ref{lemsob}. In fact, we
explain that if $f \in H(\Omega) $ and $g \in C^1 ([0,\pi]), $ then
$f \cdot g \in H(\Omega)$ for a wider class of weights then we need
in Lemma \ref{lemsob} -- see below Proposition \ref{propA}.

Recall that if $ \xi=(\xi_k) \in \ell^2 (\mathbb{Z})$ then its
discrete Hilbert transform is defined by $$(\mathcal{H} \xi)_n
=\sum_{k\neq n} \frac{\xi_k}{n-k},\quad n \in \mathbb{Z}. $$ It is
well known that  the operator $\mathcal{H}: \ell^2 (\mathbb{Z}) \to
\ell^2 (\mathbb{Z}) $ is bounded. Moreover, let $\Omega = (\Omega
(k))_{k\in\mathbb{Z}} $ be a weight sequence such that
\begin{equation}\label{a0}
\Omega (0) \geq 1, \quad \Omega (-k) = \Omega (k), \quad \Omega
(k)\leq \Omega (k+1) \; \text{for} \; k \geq 0.
\end{equation}
Then it is known by \cite[Theorem 10]{HMW} that the discrete Hilbert transform
$\mathcal{H} $ acts continuously in the weighted space
$$\ell^2 (\Omega) =\{\xi:  \;  \|\xi\|_\Omega^2 =
 \sum  |\xi_k|^2  \Omega^2 (k) <\infty
    \} $$ if and only if the weight $\Omega $ satisfies
the condition
\begin{equation}
\label{a1} \sup_{k,n} \left ( \frac{1}{n+1} \sum_{m=k}^{k+n}
\Omega^2 (m)\times \frac{1}{n+1} \sum_{m=k}^{k+n} \frac{1}{\Omega^2
(m)} \right ) < \infty.
\end{equation}
See \cite{AM2} for the proof of a particular version of this
criterion which is good enough for Lemma \ref{lemA1}.

\begin{Lemma} \label{lemA1} Suppose the weight $\Omega $ satisfies
(\ref{a0}) and
\begin{equation}
\label{a2} \exists C>0 : \quad  \Omega (2k) \leq C \,\Omega (k) \quad
\forall \, k \in 2\mathbb{Z};
\end{equation}
\begin{equation}
\label{a3} \exists C>0 : \quad   \Omega (k) \leq C \sqrt{1+|k|} \quad
\forall \, k \in 2\mathbb{Z}.
\end{equation}
If $f \in H(\Omega)$  and $g \in H^1_{per},$ then $f\cdot g \in
H(\Omega).$
\end{Lemma}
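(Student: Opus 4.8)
The plan is to show that multiplication by $g$ is a bounded operator on $H(\Omega)$, i.e. to bound $\sum_{n}\Omega(n)^2|(fg)_n|^2$ by $C\,\|g\|_{H^1_{per}}^2\,\|f\|_{H(\Omega)}^2$, working entirely on the Fourier side where $(fg)_n=\sum_k f_k\,g_{n-k}$. First I would record two elementary consequences of $g\in H^1_{per}$: by Cauchy--Schwarz $\sum_m|g_m|\le(\sum_m(1+m^2)|g_m|^2)^{1/2}(\sum_m(1+m^2)^{-1})^{1/2}<\infty$, so $\hat g\in\ell^1$; and, setting $w_m:=\Omega(m)|g_m|$, the growth bound (\ref{a3}) gives $\sqrt{|m|}\,w_m\le C\sqrt{|m|}\sqrt{1+|m|}\,|g_m|\le C'\sqrt{1+m^2}\,|g_m|$, hence both $w$ and $(\sqrt{|m|}\,w_m)$ lie in $\ell^2$.

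The core step is a splitting of the convolution according to whether the interaction is near-diagonal or far. I would write $(fg)_n=S_{II}(n)+S_{I}(n)$, where $S_{II}$ collects the indices with $|n-k|\le|n|/2$ and $S_I$ those with $|n-k|>|n|/2$, and control $\Omega(n)$ using the doubling hypotheses (\ref{a0}) and (\ref{a2}). In the near-diagonal range $|n-k|\le|n|/2$ one has $|n|\le2|k|$, so $\Omega(n)\le\Omega(2|k|)\le C\Omega(k)$; in the far range $|n-k|>|n|/2$ one has $|n|\le2|n-k|$, so $\Omega(n)\le C\Omega(n-k)$.

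For $S_{II}$ I would push the weight onto $f$: $\Omega(n)|S_{II}(n)|\le C\sum_{m}|g_m|\,F_{n-m}$ with $F_j:=\Omega(j)|f_j|\in\ell^2$, and since the kernel $|g_m|$ is in $\ell^1$, Young's inequality gives $\|\Omega\,S_{II}\|_{\ell^2}\le C\|\hat g\|_{\ell^1}\|f\|_{H(\Omega)}$. For $S_I$ I would instead push the weight onto $g$: $\Omega(n)|S_I(n)|\le C\sum_{|m|>|n|/2}|f_{n-m}|\,w_m\le C\|f\|_{\ell^2}\,\big(\sum_{|m|>|n|/2}w_m^2\big)^{1/2}$, and then sum in $n$, using $\sum_n\sum_{|m|>|n|/2}w_m^2=\sum_m w_m^2\,\#\{n:|n|<2|m|\}\le C\sum_m|m|\,w_m^2<\infty$; this last finiteness is exactly where (\ref{a3}) is spent. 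Adding the two contributions yields the claim, and the same scheme (after the $[0,\pi]$-periodic reductions) feeds the proof of Lemma~\ref{lemsob} and its sharpening in Proposition~\ref{propA}.

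The step I expect to be the genuine obstacle is the near-diagonal term. There the kernel $g_{n-k}$ carries no decay, and the estimate survives for $g\in H^1_{per}$ only because $\hat g\in\ell^1$ makes Young's inequality applicable after the doubling reduction $\Omega(n)\asymp\Omega(k)$. If one weakens the hypothesis to $g\in C^1$ (the setting of Proposition~\ref{propA}), the periodic Fourier coefficients decay only like $1/|m|$, so $\hat g\in\ell^2\setminus\ell^1$ and the crude bound fails; one must then retain the cancellation in the Hilbert-type kernel $1/(n-k)$ and invoke boundedness of the discrete Hilbert transform $\mathcal H$ (governed by the $A_2$-condition (\ref{a1}), used locally where $\Omega(n)\asymp\Omega(k)$ at scale $|n|$). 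That is the point at which the discrete Hilbert transform becomes indispensable.
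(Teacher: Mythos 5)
Your proof is correct, but it packages the argument differently from the paper. The paper proves the lemma by duality: it tests the convolution $\hat f\ast\hat g$ against an arbitrary $b\in\ell^2(\Omega^{-1})$, forms the trilinear sum $T=\sum_{m,k}\hat f(k)\hat g(m-k)b(m)$, applies a single Cauchy--Schwarz after distributing the weights $\Omega(k)$, $\sqrt{1+(m-k)^2}$, $\Omega(m)^{-1}$, and thereby reduces everything to the Schur-type bound
$S=\sup_m\sum_k \Omega^2(m)\big/\big(\Omega^2(k)[1+(m-k)^2]\big)<\infty$;
that finiteness is then proved by exactly the dichotomy you use ($|k|\ge|m|/2$ handled by doubling (\ref{a2}), $|k|<|m|/2$ handled by the growth bound (\ref{a3})). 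So the core combinatorial idea --- push $\Omega(n)$ onto the $f$-index near the diagonal and onto the $g$-index far from it --- is identical, but your implementation splits the convolution first and treats the two ranges with different tools: Young's inequality with $\hat g\in\ell^1$ for the near-diagonal piece, and Cauchy--Schwarz plus the Fubini count $\#\{n:|n|<2|m|\}\lesssim|m|$ for the far piece. What each buys: the paper's duality/Schur route is more uniform (one inequality, one kernel, and it never needs $\hat g\in\ell^1$, only the weighted $\ell^2$ decay built into $H^1_{per}$), while yours is more elementary --- no duality --- and makes visible exactly where each hypothesis is spent, in particular that $H^1_{per}$ enters twice, once as $\hat g\in\ell^1$ and once as $\sum(1+|m|)^2|g_m|^2<\infty$. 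Your closing diagnosis is also consonant with the paper: for non-periodic $g\in C^1([0,\pi])$ the coefficients decay only like $1/|k|$, and the paper's Proposition \ref{propA} handles precisely this by splitting off the linear part $mx$ (whose coefficients are $i/k$) and invoking the discrete Hilbert transform under the $A_2$-type condition (\ref{a1}), applying Lemma \ref{lemA1} only to the periodic remainder.
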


\begin{proof}
Recall that $H^1_{per}= H(\sqrt{1+k^2}).$ Let $(\hat{f}(k))$ and
$(\hat{g}(k))$ be the Fourier coefficients of $f$ and $g$ with
respect to the system $e^{ikx}, \; k \in 2\mathbb{Z}.$ It is enough
to show that $\hat{f} \ast \hat{g} \in \ell^2 (\Omega). $ To this
end we consider, for $b \in \left ( \ell^2 (\Omega) \right )^\ast =
\ell^2 (\Omega^{-1}),$ the ternary form $$ T = \sum_m \sum_k
\hat{f}(k)\hat{g}(m-k) b(m)
$$
and show that it is bounded.

Set
$$
\xi (k) =\hat{f}(k)\Omega (k), \quad \eta (k) = \hat{g}(k)
\sqrt{1+k^2}, \quad \beta (k) = b(k)/\Omega (k), \quad k \in
2\mathbb{Z}.
$$
Then we have $\xi, \eta, \beta \in \ell^2 (2 \mathbb{Z})$ and
$$
\|\xi\| = \|\hat{f}\|_{\ell^2 (\Omega)}, \quad
\|\eta\|=\|\hat{g}\|_{\ell^2 (\sqrt{1+k^2})}, \quad  \|\beta\|=
\|b\|_{\ell^2 (\Omega^{-1})}.
$$
Now the ternary form $T$ can be written as
$$
T = \sum_{k,m} \frac{\xi (k)}{\Omega (k)} \cdot \frac{\eta
(m-k)}{\sqrt{1+(m-k)^2}} \cdot \beta (m) \Omega (m),
$$
and the Cauchy inequality implies
$$
|T|^2 \leq \left (\sum_{k,m} |\xi (k)|^2 |\eta (m-k)|^2   \right )
\left ( \sum_{k,m}  \Omega^2 (m) \frac{|\beta (m)|^2}{\Omega^2 (k)
[1+(m-k)^2]} \right )
$$
$$
\leq S \|\xi\|^2 \|\eta\|^2 \|\beta \|^2,
$$
where
$$
S= \sup_m   \sum_k \frac{\Omega^2 (m)}{\Omega^2 (k) [1+(m-k)^2]}.
$$
Next we explain that $S<\infty.$ Indeed, in view of (\ref{a2}), if
$|k| \geq |m|/2 $ then $\Omega (m) \leq \Omega (2k) \leq C \Omega
(k). $ Therefore,
$$
\sum_{|k|\geq |m|/2} \frac{\Omega^2 (m)}{\Omega^2 (k) [1+(m-k)^2]}
\leq C^2 \sum_{|k|\geq |m|/2} \frac{1}{[1+(m-k)^2]} \leq C^2 (1+\pi).
$$
On the other hand, if $|k| <|m|/2 $ then $|m-k| >|m|/2.$  Thus,
$$
\sum_{|k|< |m|/2} \frac{\Omega^2 (m)}{\Omega^2 (k) [1+(m-k)^2]} \leq
\sum_{|j|>|m|/2} \frac{\Omega^2 (m)}{1+j^2} \leq \frac{4\Omega^2
(m)}{1+|m|}.
$$
Now  (\ref{a3}) implies that
$$
S \leq \sup_m \left (  C^2 (1+\pi) + \frac{4 \Omega^2 (m)}{1+|m|}
\right ) <\infty,
$$
which completes the proof.

\end{proof}

\begin{Proposition}
\label{propA} If a weight sequence $\Omega$ satisfies (\ref{a1}) -
(\ref{a3}), $ f \in H(\Omega)$ and  $g \in C^1 ([0,\pi]),$  then $f
\cdot g \in H(\Omega).$
\end{Proposition}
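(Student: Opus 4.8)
The plan is to reduce Proposition~\ref{propA} to Lemma~\ref{lemA1} together with the boundedness of the discrete Hilbert transform. The only obstruction to applying Lemma~\ref{lemA1} directly is that a function $g\in C^1([0,\pi])$ need not be periodic: if $g(0)\neq g(\pi),$ then a single integration by parts gives only $\hat g(k)=O(1/k),$ so $g\notin H^1_{per}$ in general. I would therefore split off the non-periodic part. Let $\ell(x)=\frac{g(\pi)-g(0)}{\pi}\,x+g(0)$ be the linear function with $\ell(0)=g(0)$ and $\ell(\pi)=g(\pi),$ and set $g_p=g-\ell.$ Then $g_p\in C^1([0,\pi])$ with $g_p(0)=g_p(\pi),$ so integration by parts gives $k\hat{g}_p(k)=-i\,\widehat{g_p'}(k)\in\ell^2(2\mathbb{Z}),$ whence $g_p\in H^1_{per}.$

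By Lemma~\ref{lemA1} (its hypotheses being the standing assumption (\ref{a0}) together with (\ref{a2}) and (\ref{a3})), we get $f\cdot g_p\in H(\Omega).$ Since $f\cdot\ell=\frac{g(\pi)-g(0)}{\pi}\,(x f)+g(0)\,f,$ and since adding a scalar multiple of $f\in H(\Omega)$ keeps us in $H(\Omega),$ the entire matter reduces to a single point: that multiplication by the identity function is a bounded operation on $H(\Omega),$ i.e. that $xf\in H(\Omega)$ whenever $f\in H(\Omega).$

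For this I would compute the Fourier coefficients of the identity function with respect to $\{e^{ikx},\,k\in2\mathbb{Z}\}$ on $[0,\pi].$ A one-line integration by parts (using $e^{-ik\pi}=1$ for even $k$) yields $\hat x(0)=\pi/2$ and $\hat x(k)=i/k$ for $k\neq0.$ Writing the coefficients of the product as a convolution (valid since $x\in L^\infty$ and $f\in L^2,$ with absolute convergence by Cauchy--Schwarz from $\hat x,\hat f\in\ell^2$), one obtains
\[
\widehat{xf}(n)=\sum_{k}\hat x(k)\hat f(n-k)=\frac{\pi}{2}\hat f(n)+i\sum_{j\neq n}\frac{\hat f(j)}{n-j}=\frac{\pi}{2}\hat f(n)+i(\mathcal{H}\hat f)(n).
\]
Thus multiplication by $x$ is, up to the bounded diagonal term $\frac{\pi}{2}I,$ precisely the discrete Hilbert transform acting on Fourier coefficients. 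Since $f\in H(\Omega)$ means $\hat f\in\ell^2(\Omega),$ and since (\ref{a1}) is exactly the Hunt--Muckenhoupt--Wheeden condition \cite{HMW} guaranteeing that $\mathcal{H}$ is bounded on $\ell^2(\Omega),$ both summands lie in $\ell^2(\Omega);$ hence $\widehat{xf}\in\ell^2(\Omega),$ i.e. $xf\in H(\Omega).$ Combined with $f g_p\in H(\Omega)$ this gives $f\cdot g=f g_p+f\ell\in H(\Omega).$

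The genuinely substantive step is the last one: the linear (non-periodic) part of $g$ is exactly what converts multiplication into the Hilbert transform, and it is condition (\ref{a1}) — rather than the elementary kernel estimate used in Lemma~\ref{lemA1} — that is required to control it. Everything else (the splitting $g=g_p+\ell,$ the endpoint computation of $\hat x,$ and the convolution identity) is routine; I would only pause to confirm that $g_p$ genuinely lands in $H^1_{per}$ and that the convolution defining $\widehat{xf}$ converges, both of which are immediate from $g_p'\in L^2$ and $\hat x\in\ell^2,$ respectively.
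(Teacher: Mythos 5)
Your proof is correct and follows essentially the same route as the paper: split $g$ into a periodic $C^1$ part (handled by Lemma~\ref{lemA1}) plus a linear part, compute $\hat x(0)=\pi/2$, $\hat x(k)=i/k$, and control multiplication by $x$ via the discrete Hilbert transform, whose $\ell^2(\Omega)$-boundedness is exactly the Hunt--Muckenhoupt--Wheeden condition (\ref{a1}). The only cosmetic difference is that you absorb the constant $g(0)$ into the linear part rather than into the periodic part, which changes nothing.
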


\begin{proof}
The $C^1$-function $g$ could be written as a sum of a linear function
and a periodic $C^1$-function as
$$
g(x) = \ell (x) + g_1 (x) \quad \text{with} \quad   \ell (x) = m \,
x, \; \; m =(g(\pi) - g(0))/\pi.
$$
Since $g_1 \in H_{per}, $  Lemma \ref{lemA1} implies that $f\cdot
g_1 \in H(\Omega). $  So, it remains to prove that $x \, f(x) \in
H(\Omega).$

Since $$ x= \sum_{k \in 2\mathbb{Z}} c(k) e^{ikx}, \quad \text{with}
\quad c(0) = \pi/2,  \quad c(k) = \frac{i}{k} \; \;\text{for} \; \;
k \neq 0,
$$
we obtain that $$c \ast \hat{f} (k) = \frac{\pi}{2}\hat{f} (k) +
i\sum_{j\neq k} \frac{\hat{f} (j)}{k-j}, \quad \text{that is} \quad
c \ast \hat{f} = \frac{\pi}{2} \hat{f} + \mathcal{H} (\hat{f}).
$$
Since (\ref{a1}) holds, $\mathcal{H} (\hat{f}) \in \ell^2 (\Omega)$
(due to the results of \cite{HMW}), so $c \ast \hat{f} \in \ell^2
(\Omega).$ Thus, $x \, f(x) \in H(\Omega),$ which completes the
proof.

\end{proof}

Proposition \ref{propA} would imply Lemma \ref{lemsob} if we check
that the conditions (\ref{a1}) - (\ref{a3}) for the weights $\Omega
$ given by
\begin{equation}
\label{a11} \Omega (k) = (1+|k|)^\alpha,    \quad 0 \leq \alpha <
1/2,
\end{equation}
or
\begin{equation}
\label{a12} \Omega (k) = (\log (e+|k|))^\delta,   \quad 0 \leq
\delta < \infty.
\end{equation}

\begin{Lemma}
\label{lemA2} Weights $\Omega $ given by (\ref{a11}) and (\ref{a12})
satisfy the conditions (\ref{a1}) - (\ref{a3}).
\end{Lemma}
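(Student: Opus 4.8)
The plan is to verify the three conditions (\ref{a1})--(\ref{a3}) for each of the two families (\ref{a11}) and (\ref{a12}) in parallel. Conditions (\ref{a0}), (\ref{a2}) and (\ref{a3}) are elementary, and essentially all the work lies in the Muckenhoupt-type condition (\ref{a1}); that is where I would concentrate.

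First I would dispose of the easy conditions. Both weights are even, satisfy $\Omega(0)=1$, and are nondecreasing in $|k|$, so (\ref{a0}) holds. For the doubling bound (\ref{a2}): in the power case $(1+2|k|)\le 2(1+|k|)$ gives $\Omega(2k)\le 2^{\alpha}\Omega(k)$, while in the logarithmic case $\log(e+2|k|)\le\log 2+\log(e+|k|)\le(1+\log 2)\log(e+|k|)$, using $\log(e+|k|)\ge 1$, gives $\Omega(2k)\le(1+\log 2)^{\delta}\Omega(k)$. For (\ref{a3}): since $\alpha<1/2$ we have $(1+|k|)^{\alpha}\le(1+|k|)^{1/2}$ directly, and $(\log(e+|k|))^{\delta}(1+|k|)^{-1/2}\to 0$ as $|k|\to\infty$, hence is bounded. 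Thus (\ref{a2}) and (\ref{a3}) hold for both families.

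The heart of the matter is the uniform bound
$$\sup_{I}\Big(\frac{1}{|I|}\sum_{m\in I}\Omega^2(m)\Big)\Big(\frac{1}{|I|}\sum_{m\in I}\frac{1}{\Omega^2(m)}\Big)<\infty,$$
the supremum being over all finite discrete intervals $I=\{k,\dots,k+n\}$. My plan is to compare each discrete average with the integral of $(1+|x|)^{\pm\gamma}$ (with $\gamma:=2\alpha\in[0,1)$) or of $(\log(e+|x|))^{\pm\eta}$ (with $\eta:=2\delta\ge 0$) over $[k,k+n]$; since $x\mapsto\Omega(x)$ is monotone in $|x|$, each sum is comparable to its integral up to absolute constants. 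It then suffices to bound the product of the two integral averages, i.e. to show that $|x|^{\gamma}$ with $|\gamma|<1$ and $(\log(e+|x|))^{\pm\eta}$ are $A_2$-weights on $\mathbb{R}$. For an interval lying on one side of the origin and short relative to its distance from $0$, $\Omega$ is comparable to a constant on $I$ and the product is $O(1)$. For a long interval, after folding to $[0,N]$-type intervals by evenness, I would use the elementary asymptotics $\int_0^N(1+x)^{\pm\gamma}\,dx\asymp N(1+N)^{\pm\gamma}$, whose product of averages is comparable to $\frac{1}{(1-\gamma)(1+\gamma)}=\frac{1}{1-\gamma^2}$, finite precisely because $\gamma<1$. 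An interval that straddles the origin is handled by enlarging: with $M=\max(|k|,|k+n|)$ one only needs an upper bound on each average, and $\frac{1}{|I|}\sum_I\le\frac{1}{M}\sum_{[-M,M]}$ reduces the straddling case to the symmetric (hence one-sided) computation just described. For the logarithmic weight the same scheme applies: the averages of $(\log(e+x))^{\pm\eta}$ over $[0,N]$ are each comparable to the endpoint value $(\log(e+N))^{\pm\eta}$ because the integrand is slowly varying, so the two factors cancel and the product stays bounded for every $\delta$.

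The main obstacle is thus confined to condition (\ref{a1}) and, within it, to the long-interval/near-origin regime, where the hypothesis $\alpha<1/2$ (equivalently $\gamma=2\alpha<1$) is used decisively: it is exactly what makes $\sum_{m=0}^{N}(1+m)^{-\gamma}\asymp N^{1-\gamma}$, keeping $\frac{1}{|I|}\sum_I\Omega^{-2}$ under control. If $\gamma\ge 1$ this sum would grow only like $\log N$ or stay bounded, and the $A_2$ product would diverge like $N^{\gamma-1}$; so $\alpha=1/2$ is a genuine threshold for (\ref{a11}). For the logarithmic family no such threshold appears, since slow variation forces the product of averages to be bounded for all $0\le\delta<\infty$, consistent with the range allowed in (\ref{a12}). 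Once (\ref{a1})--(\ref{a3}) are in hand, Lemma~\ref{lemsob} follows from Proposition~\ref{propA}.
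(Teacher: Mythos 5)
Your proposal is correct and follows essentially the same route as the paper: elementary checks for (\ref{a2})--(\ref{a3}), and for (\ref{a1}) a case split in which intervals far from the origin are handled by monotonicity plus doubling (so $\Omega$ is essentially constant there), while intervals near or straddling the origin are enlarged by evenness to $[0,N]$-type intervals whose two averages are then computed explicitly, with $\alpha<1/2$ entering exactly where you say it does. The only cosmetic differences are that the paper estimates the discrete sums directly instead of comparing them to integrals, and it proves the slow-variation bound for the logarithmic weight by hand (splitting $\sum_{j=0}^{2n}(\log(e+j))^{-2\delta}$ at $j=\sqrt{n}$) rather than invoking Karamata-type asymptotics.
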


\begin{proof}
Elementary inequalities show that (\ref{a2}) and (\ref{a3}) hold. To
check (\ref{a1}) we have to show that there is $M>0 $ such that
\begin{equation}
\label{a13} \frac{1}{n+1} s(k,n) \times \frac{1}{n+1} S(k,n) \leq M
\quad \forall k \in \mathbb{Z}, \; n \in \mathbb{Z}_{+},
\end{equation}
where
$$
s(k,n) = \sum_{m=k}^{k+n} \frac{1}{\Omega^2 (m)}, \quad S(k,n) =
\sum_{m=k}^{k+n} \Omega^2 (m).
$$
There are three cases:
$$
(a) \; \; k < -2n;  \qquad   (b) \;\;  -2n \leq k  \leq n;  \qquad
(c) \;\; k>n.
$$
By (\ref{a0}), $\Omega (-m) = \Omega (m);$  therefore, with $k_1 =
-k- n $ we have
$$
s(k,n)= s(k_1,n), \quad   S(k,n)= S(k_1,n),
$$
so the case (a) reduces to (c).  If $k>n, $ then by (\ref{a0}) and
(\ref{a2})
$$
\Omega (k) \leq \Omega (m) \leq \Omega (2k) \leq C \, \Omega (k),
\quad \quad k \leq m \leq k+n.
$$
Therefore, it follows
$$
\frac{1}{n+1} s(k,n) \leq \frac{1}{\Omega (k)} \quad \text{and}
\quad \frac{1}{n+1} S(k,n) \leq \Omega (2k) \leq C \Omega (k),
$$
so the product in (\ref{a13}) does not exceed the constant $C$ from
(\ref{a2}).

Next, we consider the case (b) where $-2n \leq k \leq n.$ Then, by
(\ref{a0}), it follows
\begin{equation}
\label{a26} \frac{s(k,n)}{n+1} =\frac{1}{n+1} \sum_k^{k+n}
\frac{1}{\Omega^2 (m)} \leq  \frac{2}{1+2n} 2\sum_0^{2n}
\frac{1}{\Omega^2 (m)} = 4\frac{s(0,2n)}{2n+1}
\end{equation}
and
\begin{equation}
\label{a27} S(k,n) \leq 2 S(0,2n) \leq 2 (2n+1) \Omega (2n).
\end{equation}

If $\Omega $ is of the form (\ref{a11}), then
$$
s(0,2n) = \sum_{j=0}^{2n} \frac{1}{(1+j)^{2\alpha}} \leq 1 +
\int_0^{2n} \frac{1}{(1+x)^{2\alpha}}dx \leq \frac{2}{1-2\alpha}
(1+2n)^{1-2\alpha},
$$
so (\ref{a26}) and (\ref{a26}) show that the product in (\ref{a13})
does not exceed
$$\left ( \frac{1}{2n+1} \cdot \frac{2}{1-2\alpha}
(2n+1)^{1-2\alpha} \right ) \cdot 4 (2n+1)^{2\alpha} \leq
\frac{8}{1-2\alpha}.
$$

If $\Omega $ is of the form (\ref{a12}), then
$$
s(0,2n) = \sum_{0\leq j \leq \sqrt{n}} \frac{1}{(\log (e+j))^\delta}
+ \sum_{\sqrt{n} < j \leq 2n} \frac{1}{(\log (e+j))^\delta}
$$
$$
\leq \frac{1+\sqrt{n}}{1+2n} +\frac{1+ 2n - \sqrt{n}}{1+2n}
\frac{1}{(\log (e+\sqrt{n}))^\delta} \leq \frac{M}{(\log
(e+\sqrt{n}))^\delta}
$$
with
$$
M = 2 \max \frac{(\log (e+\sqrt{n}))^\delta}{1+\sqrt{n}} +1.
$$
Since (\ref{a27}) holds for every monotone weight $\Omega $ it
follows that the product (\ref{a13}) does not exceed $M\cdot
\tilde{M}^\delta $  with $\tilde{M} = \max_{n\geq 0} \frac{\log
(e+2n)}{\log (e+\sqrt{n})}.$ This completes the proof of Lemma
\ref{lemA2}.
\end{proof}

\end{document}